\documentclass[12pt,reqno]{amsart}
\usepackage{cmap}
\usepackage[T1]{fontenc}
\pagestyle{plain}

\usepackage{amsmath,amsfonts,amssymb,amsthm,mathtools,bbold,adjustbox}
\usepackage[vcentermath]{youngtab}
\usepackage[all]{xy}
\usepackage{csquotes}
\usepackage[usenames,dvipsnames]{xcolor}
\usepackage{pstricks}
\usepackage{graphicx,tikz-cd}
\usepackage{rotating}
\usepackage{cite}
\usepackage{setspace}
\usepackage{tabu}
\usepackage{tikz}
\usetikzlibrary{decorations.markings, patterns, chains}
\usepackage[margin=1in]{geometry}
\usepackage{siunitx} 
\usepackage{ytableau}

\DeclareMathOperator{\Hom}{Hom}

\DeclareMathOperator{\Irr}{Irr}
\DeclareMathOperator{\Tr}{T}
\DeclareMathOperator{\MD}{mod}
\DeclareMathOperator{\aff}{aff}
\DeclareMathOperator{\rev}{rev}
\DeclareMathOperator{\dom}{dom}

\DeclareMathOperator{\LKC}{LKC}
\DeclareMathOperator{\RKC}{RKC}

\DeclareMathOperator{\charge}{charge}
\DeclareMathOperator{\lch}{lch}
\DeclareMathOperator{\Diff}{Diff}

\DeclarePairedDelimiter\floor{\lfloor}{\rfloor}

\newcommand{\mcC}{\mathcal{C}}

\newcommand{\mcO}{\mathcal{O}}

\newcommand{\mcF}{\mathcal{F}}

\newcommand{\mbZ}{\mathbb{Z}}

\newcommand{\mfF}{\mathfrak{F}}

\theoremstyle{plain}
\newtheorem{Theorem}{Theorem}[section]
\newtheorem{Proposition}[Theorem]{Proposition}
\newtheorem{Lemma}[Theorem]{Lemma}
\newtheorem{Corollary}[Theorem]{Corollary}
\newtheorem{Conjecture}[Theorem]{Conjecture}
\newtheorem{Example}[Theorem]{Example}

\theoremstyle{definition}
\newtheorem{Definition}[Theorem]{Definition}

\theoremstyle{remark}
\newtheorem{Remark}[Theorem]{Remark}

\author{Pablo Boixeda Alvarez}
\address{Department of Mathematics\\ Massachusetts Institute of Technology\\ Cambridge, MA, 02139, USA}
\email{pabloboixeda@hotmail.com}

\author{Li Ying}
\address{Department of Applied and Computational Mathematics and Statistics\\ University of Notre Dame\\ Notre Dame, IN 46556, USA}
\email{98yingli@gmail.com}

\author{Guangyi Yue}
\address{Department of Mathematics\\ Massachusetts Institute of Technology\\ Cambridge, MA 02139, USA}
\email{guangyiyue@hotmail.com}

\begin{document}
\setcounter{tocdepth}{1}

\title{Affine Springer fibers and the affine matrix ball construction for rectangular type nilpotents}
\maketitle
\begin{abstract}
In this paper, we study the affine Springer fiber $\mcF l_N$ in type $A$ for rectangular type semisimple nil-element $N$ and calculate the relative position between irreducible components. In particular, we use the affine matrix ball construction to show the relative position map is compatible with the Kazhdan-Lusztig cell structure, generalizing the work of Steinberg and van Leeuwen.
\end{abstract}
\tableofcontents

\pagestyle{plain}

\section{Introduction}
In the paper \cite{kazhdan1979representations}, Kazhdan and Lusztig laid a foundation for studying representations of Hecke algebras, and in particular they introduced the notion of (two-sided, left, right) cells for Coxeter groups. In type $A$, the Kazhdan-Lusztig cell structure of the symmetric group corresponds to the well known Robinson-Schensted correspondence, which is a bijection between the symmetric group $S_n$ and pairs of standard Young tableau of the same shape $\lambda\vdash n$:
$$w\in S_n\mapsto (\text{insertion tableau } P,\text{recording tableau }Q).$$ 
Namely,
\begin{enumerate}
	\item two permutations are in the same two-sided cell iff they have the same associated partition $\lambda$;
	\item two permutations are in the same right cell iff they have the same insertion tableau $P$;
	\item two permutations are in the same left cell iff they have the same recording tableau $Q$.
\end{enumerate}

Robinson-Schensted correspondence are realized by many equivalent combinatorial algorithms, for example the row-insertion algorithm and the matrix ball construction \cite{viennot1977forme,fulton1997young}. This combinatorial correspondence appears in the study of (finite) Springer fibers. Given a nilpotent $N$ of type $\lambda$, Spaltenstein\cite{Spaltenstein} labeled the irreducible components of the Springer fiber of $N$ by standard Young tableau of shape $\lambda$. Later on, Steinberg \cite{Steinberg} showed the relative position between two components labeled by tableaux $P$ and $Q$ respectively are exactly the permutation corresponding to $(P,Q)$ under the Robinson-Schensted algorithm. This result is compatible with the cell structure since the image of the relative position map is exactly a right (resp. left) cell if we fix the first (resp. second) component. These nice interpretations are further extended by van Leeuwen in \cite{vanLeeuwen}. And the natural question is to find an analogue in the affine setting. 

On the combinatorial side, the Robinson-Schensted correspondence is generalized by Shi \cite{shi2006kazhdan} to the affine symmetric group $\widetilde{S_n}$, giving a parametrization of the left cells by tabloids. The shape of these tabloids determines the two-sided cell. Later Honeywill \cite{honeywill2005combinatorics} added the third piece of data, weights, to make it a bijection:
$$w\in\widetilde{S_n}\mapsto ( \text{insertion tabloid } P,\text{recording tabloid }Q,\text{dominant weight }\rho).$$

Both Shi and Honeywill's algorithms are very involved and Chmutov, Pylyavskyy, Yudovina \cite{chmutov2018matrix} generalized the matrix ball construction given by Viennot to give a simpler and more intuitive realization. This generalized algorithm, named the affine matrix ball construction, has a variety of nice applications. In particular, it is used to understood the structure of bi-directed edges in the Kazhdan-Lusztig cells in affine type $A$ in \cite{chmutov2017monodromy}. Most importantly, fibers of the inverse map of affine matrix ball construction possess a Weyl group symmetry. The relative position map in the affine setting, though not injective, is proven to have the same property, which is our motivation to establish a bijection similar to Steinberg's.

On the geometry side, the affine Springer fibers appearing in this paper have been studied before. In particular the geometry of these are studied in \cite{GKM}. The case for type $(1^n)$ has been studied in further depth. In particular the cohomology has been studied by works of Goresky, Kottwitz, Macpherson \cite{GKM2}, Hikita \cite{Hi} and Kivinen \cite{Kivinen}. This affine Springer fiber is also related with the representation theory of small quantum groups as proven in upcoming work of Bezrukavnikov, McBreen and upcoming jont work of the first author with Bezrukavnikov, Shan and Vasserot \cite{BBASV}.

In \cite{LusztigConjclass}, Lusztig introduced the partitions of the extended affine Weyl group $\widetilde{W}$ into $S$-cells and $\tilde{S}$-cells, both parametrized by the conjugacy classes in the finte Weyl group $W$, and conjectured that the image of the relative position map from pairs of irreducible components of the affine Springer fiber to the extended affine Weyl group
\begin{equation*}
\label{Lusztigconj}
\Irr(\mcF l_N)\times\Irr(\mcF l_N)\rightarrow \widetilde{W}
\end{equation*}
is exactly the $S$-cell of type $\gamma$ where $N$ is a regular semi-simple nil-element of type $\gamma$. Lusztig's conjecture is proved in the recent work of Finkelberg, Kazhdan and Varshavsky \cite{finkelberg2020lusztig} in general type. They use the affine Springer resolution and families of Springer fibers, but here we focus on a single affine Springer fiber. Also Lawton \cite{Lawton} showed that in type $\tilde{A}$, the two-sided cells and the $\tilde{S}$-cells coincide.

It follows directly from \cite{finkelberg2020lusztig} and results in this paper that the $S$-cells and the two-sided Kazhdan-Lusztig cells agree in type $\tilde{A}$ and for rectangular type, which also coincides with $\tilde{S}$-cells by \cite{Lawton}. The general relationship between $S$-cells and two-sided cells is still unknown, and we leave this for future investigation.

In this paper, we focus on type $A$ and $N$ is of rectangular type $(l^m)$, and study the relationship between relative position and the two-sided, left and right cells instead of $S$-cells, giving an affine generalization of Spaltenstein, Steinberg and van Leeuven's results in the finite case. The special column type $(1^n)$ is presented in \cite{boixeda2019fix,yuganyou,Boixedathesis}. Namely, we use affine matrix ball contruction to establish a bijection between pairs of irreducible components modulo common translations with $\Omega_{(l^m)}$, which are triples $(P,Q,\rho)$ of rectangular-type and $\rho$ is not necessarily dominant. This is given in the following commutative diagram:
\begin{equation*}
\begin{tikzcd}
\Irr(\mathcal{F}l_{N})/\Lambda\arrow[dd,"\theta"] && \Irr(\mathcal{F}l_{N})\times_\Lambda \Irr(\mathcal{F}l_{N})\arrow[ll, "pr_i"']\arrow[rr, "r"]\arrow[dd,"\Theta"] && \widetilde{S_n}\\
&&&&\\
T(l^m) && \Omega_{(l^m)}\arrow[ll,"pr_i"']\arrow[uurr,"\Psi"'] &&
\end{tikzcd}
\end{equation*}
where $T(l^m)$ is the collection of tabloids of shape $(l^m)$, $\Psi$ is the inverse of the affine matrix ball construction, and $pr_i$, $i=1,2$, are the projections onto the first and second component respectively. It follows that the image of the relative position map $r$ is the two-sided cell of type $(l^m)$. Moreover, $r(pr_1^{-1}(C))$ (resp. $r(pr_2^{-1}(C))$) is a right (resp. left) cell for any $C \in \Irr(\mcF l_N)/\Lambda$, similar to the finite scenario.

The rest of the paper is organized as follows. In Section 2, we review the affine matrix ball construction and the related combinatorics about the affine symmetric group. And in Section 3 we study the explicit structure of two-sided cell of rectangular type. Section 4 is a review of basics on affine Springer fibers and in Section 5 we study the geometry of the irreducible components of $\mcF l_N$ when $N$ is of rectangular type and calculate the relative position between any two irreducible components. Section 6 deals with the case of $n=2$ explicitly. The proof of the main theorem is presented in Section 7. In the appendix, we give diagrams of left Knuth classes containing $w_0^\lambda$ when $\lambda=(2,2),(3,3)$ and $(2,2,2)$.



{\bf Acknowledgements.} The authors would like to thank Roman Bezrukavnikov for suggesting this problem and continuous discussions throughout the process. Also, the authors are grateful to Zhiwei Yun for many useful discussions. The first author also wants to thank Dongkwan Kim and Pavlo Pylyavskyy for a useful early discussion.

\section{Combinatorial preliminaries}

For the entire paper, we fix a positive integer $n$. Denote $[a,b]=[a,a+1,\ldots,b] $ for any $a,b\in\mbZ, a<b$ and $[a]=[1,a]$ for $a\in\mathbb{Z}_{>0}$. For any $i\in\mbZ$, let $\overline{i}$ be the residue class $i+n\mbZ$, and denote $\left[\overline{n}\right]=\{\overline{1},\ldots,\overline{n}\}.$
\subsection{Affine Symmetric Group}
Let $S_n$ be the \textit{symmetric group} on $n$ letters, which is the Weyl group of type $A_{n-1}$. The \textit{extended affine symmetric group} $\overline{S_n}$ is the collection of all bijections $w: \mathbb{Z}\rightarrow\mathbb{Z}$ satisfying $w(i+n)=w(i)+n$ for all $i\in\mbZ.$ And we call the elements in $\overline{S_n}$ to be \textit{extended affine permutations}. Let $\widetilde{S_n}\subset\overline{S_n}$ be the \textit{affine symmetric group} consisting of all $w\in\overline{S_n}$ satisfying $\sum_{i=1}^nw(i)=\frac{n(n+1)}{2}$, and elements inside $\widetilde{S_n}$ are called \textit{affine permutations}. The (extended) affine symmetric group is exactly the (extended) affine Weyl group of type $\widetilde{A_{n-1}}.$ 

Since (extended) affine permutations are determined by its values on $[n]$, we use the \textit{window notation} $[w(1),\ldots,w(n)]$ to represent $w$. We denote $\overline{w}=\left[\overline{w(1)},\ldots,\overline{w(n)}\right]\in S_n$.

The affine symmetric group $\widetilde{S_n}$ is the Coxeter group generated by simple reflections $s_1,\ldots,s_{n-1},s_0=s_n$ (we take the indices $i$ in $[\overline{n}]$ without ambiguity) under Coxeter relations, where $s_i$ can be viewed as a permutation on $\mathbb{Z}$ such that
\[   s_i(x)=\left\{
\begin{array}{ll}
x+1, & x\equiv i\;(\MD n), \\
x-1, & x\equiv i+1\;(\MD n),  \\
x, & \text{else.}\\
\end{array} 
\right. \]
And $\overline{S_n}=\Omega \ltimes\widetilde{S_n}$ where $\Omega$ is the infinite cyclic group generated by $s=[2,3,\ldots,n+1]$. The \textit{rotation map} $\phi(w)=sws^{-1}$ is an automorphism of $\widetilde{S_n}$ sending $ s_{\overline{i}}$ to $s_{\overline{i+1}}$ for $\overline{i}\in[\overline{n}]$, which corresponds to the rotation of the Dynkin diagram of type $\widetilde{A_{n-1}}$.

There are two well-known formulas for computing the length of an affine permutation, the first one is given by Shi \cite{shi2006kazhdan}:
\begin{Lemma}
	\label{lengthformula}
	For $w\in\widetilde{S_n}$, we have
	\begin{equation*}
	\begin{split}
	\ell(w)&=\sum_{1\le i<j\le n}\left|\floor*{\frac{w(j)-w(i)}{n}}\right|\\
	&=\#\left\{(i,j)\in[n]\times\mathbb{Z}_{>0}\mid i<j, w(i)>w(j)\right\}.
	\end{split}
	\end{equation*}
\end{Lemma}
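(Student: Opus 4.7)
The plan is to prove the two equalities separately. For the second (set-theoretic) identity, denote the RHS by $\mathrm{inv}(w)$ and proceed by induction on $\ell(w)$. One checks directly that $\mathrm{inv}(e)=0$, and the crux is the following claim: for any simple generator $s_k$ with $k\in\{0,1,\dots,n-1\}$, we have $\mathrm{inv}(ws_k)=\mathrm{inv}(w)\pm 1$, with sign $+$ exactly when $\ell(ws_k)=\ell(w)+1$. To see this, observe that $ws_k$ differs from $w$ only by swapping its values at positions $mn+k$ and $mn+k+1$ for every $m\in\mbZ$ (and at positions $mn$ and $mn+1$ when $k=0$). Organizing these swaps into $\mbZ$-orbits and choosing the unique representative pair with first coordinate in $[n]$, one sees that the inversion count on this single distinguished pair toggles while every other pair is permuted among its peers with inversion status preserved. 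To close the induction, I would show that whenever $w\neq e$ there exists some $k$ making $\mathrm{inv}$ decrease: if no such $k$ existed then $w(1)<w(2)<\cdots<w(n)<w(n+1)=w(1)+n$, forcing the gaps to sum to $n-1$ and hence $w=e$. Combined with $\mathrm{inv}(e)=0$, this yields $\ell(w)=\mathrm{inv}(w)$.

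With the second equality established, I would deduce the first by reorganizing $\mathrm{inv}(w)$ according to residue classes mod $n$. For each unordered pair $\{a,b\}\subseteq[n]$ with $a<b$, partition the pairs $(i,j)$ contributing to $\mathrm{inv}(w)$ with $\{\overline{i},\overline{j}\}=\{\overline{a},\overline{b}\}$ into the subfamilies $(i,j)=(a,b+kn)$ for $k\ge 0$ and $(i,j)=(b,a+kn)$ for $k\ge 1$. Because $w$ descends to a bijection on $\mbZ/n\mbZ$, the quotient $(w(a)-w(b))/n$ is never an integer, so enumerating the non-negative (resp.~positive) integers $k$ below the threshold $(w(a)-w(b))/n$ (resp.~$(w(b)-w(a))/n$) is elementary. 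A case split on the sign of $w(a)-w(b)$ shows that exactly one subfamily contributes and the count is always $\bigl|\lfloor(w(b)-w(a))/n\rfloor\bigr|$. Summing over $a<b$ yields the first formula.

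The main obstacle lies in the inductive step, specifically in verifying the length-change criterion uniformly in $k$. For $k\in\{1,\dots,n-1\}$ the sign is determined by comparing $w(k)$ with $w(k+1)$, while for $s_0$ one must phrase it in terms of $(w(n),w(n+1))=(w(n),w(1)+n)$, equivalently in terms of $(w(0),w(1))$; care is needed because the orbit representative lies at $(n,n+1)$ rather than at $(0,1)$ and because pairs straddling the window boundary must be bookkept carefully. Once this is handled, the residue-class reorganization is routine, and the non-integrality of $(w(a)-w(b))/n$ for distinct $a,b\in[n]$ guarantees a clean floor formula without off-by-one corrections.
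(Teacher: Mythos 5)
The paper attributes this length formula to Shi (\cite{shi2006kazhdan}) and does not prove it, so there is no internal argument to compare against; your first-principles proof is the standard inversion-counting argument and is essentially correct.

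One observation that removes what you identify as the main obstacle: you do not need the sign half of your key claim at all. The three facts you do establish --- $\mathrm{inv}(ws_k)=\mathrm{inv}(w)\pm 1$ for every simple $s_k$, the existence of a decreasing $s_k$ whenever $w\neq e$ (equivalently whenever $\mathrm{inv}(w)>0$), and $\mathrm{inv}(e)=0$ --- already yield both inequalities without matching signs against $\ell$. Iterating the decrease expresses $w$ as a product of $\mathrm{inv}(w)$ simple reflections, giving $\ell(w)\leq\mathrm{inv}(w)$; applying the $\pm 1$ bound step by step along a reduced word for $w$ starting from $e$ gives $\mathrm{inv}(w)\leq\ell(w)$. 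This entirely sidesteps the uniform length-change criterion whose $s_0$ case you flag as delicate. Two smaller points: in deducing $w=e$ from $w(1)<\cdots<w(n)<w(1)+n$ you must invoke the normalization $\sum_{i=1}^n w(i)=n(n+1)/2$ defining $\widetilde{S_n}$, since otherwise the argument only shows $w$ is a translation in $\overline{S_n}$; and the bijection on non-distinguished pairs for $s_0$ is genuinely asymmetric --- it must exchange pairs with first coordinate $1$ against pairs with first coordinate $n$ while shifting the second coordinate by $\pm n$ --- which is more work than the $k\in[n-1]$ case but does close up, with $(n,n+1)$ as the toggling pair exactly as you say. Your residue-class deduction of the first formula from the second, using that $(w(a)-w(b))/n\notin\mathbb{Z}$ for distinct $a,b\in[n]$ to avoid boundary cases in the floor count, is correct.
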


\subsection{Kazhdan-Lusztig Cells and Affine Matrix Ball Construction}
We now follow \cite{chmutov2017monodromy,chmutov2018matrix} and identify affine permutations with its matrix ball configuration. In detail, for $w\in\widetilde{S_n}$, we draw a $\mathbb{Z}\times\mathbb{Z}$ matrix with row labels increasing southwards and column labels increasing eastwards. If $w(i)=j$, we draw a ball in the $(i,j)$-position of the matrix and will be named also by $(i,j)$ without ambiguity. And we denote $\mathcal{B}_w=\{(i,w(i))\mid i\in\mathbb{Z}\}$ which is the collection of the balls of $w$. The periodicity of $w$ implies that $(i,j)\in \mathcal{B}_w$ iff $(i+n,j+n)\in \mathcal{B}_w$. We say $(i+kn,j+kn)$ for $k\in\mathbb{Z}$ are the $(n,n)$-\textit{translates} of $(i,j)$. For two balls $(i,j), (k,l)\in\mathcal{B}_w$, we define the southeast (partial) ordering $\le_{SE}$ by $(i,j)\le_{SE}(k,l)$ iff $i\ge k$ and $j\ge l$, i.e. $(i,j)$ is southeast of $(k,l)$. Other relations using compass directions can be defined similarly, and are also partial orders on $\mathbb{Z}\times\mathbb{Z}$.

A \textit{partition} $\lambda$ of size $n\in\mathbb{N}$ is a finite tuple of weakly decreasing positive integers $\lambda=(\lambda_1,...,\lambda_k)$ with sum $n$. Denote $\ell(\lambda) = k$ to be the number of nonzero parts of $\lambda$. The \textit{Young diagram} of a given partition $\lambda$ is a left-justified collection of boxes with the first row having $\lambda_1$ boxes, second row having $\lambda_2$ boxes and so on. And we denote $\lambda^{\Tr}$ to be the \textit{transpose} of $\lambda$.

For a given partition $\lambda$ of size $n$, a \textit{tabloid} of shape $\lambda$ is an equivalence class of bijective fillings of the Young diagram of $\lambda$ with $\left[\overline{n}\right]$, such that two fillings are equivalent if one is obtained from the other by permuting the entries of each row. 

We denote the collection of all tabloids of shape $\lambda$ to be $T(\lambda)$. And let $T^\lambda\in T(\lambda)$ be the tabloid with $\overline{1}$ in the first row, $\overline{2}$ in the second row,..., $\overline{\lambda^{\Tr}_1}$ in the last row, $\overline{\lambda^{\Tr}_1+1}$ in the first row and so on. There is a natural left action of $S_n$ on $T(\lambda)$ and for $X\in T(\lambda)$, $X+k$  is defined to be the tabloid adding $k$ to each entry in $X$.

For any tabloid $X\in T(\lambda)$, $i\in[\lambda^{\Tr}_1]$, let $X_i\subset\left[\overline{n}\right]$ be the $i$-th row of $X$. Denote $X_i=\left\{\overline{X_{i,1}},\overline{X_{i,2}},\ldots,\overline{X_{i,\lambda_i}}\right\}$ such that $X_{i,1},\ldots,X_{i,\lambda_i}\in[n]$ and $X_{i,1}<\ldots<X_{i,\lambda_i}$. Moreover, throughout the paper, we always extend the column indices as:
$$X_{i,j+k\lambda_i}=X_{i,j}+kn$$
for $i\in[\lambda^{\Tr}_1]$ and $k\in\mbZ.$

\begin{Example}
	For $\lambda=(3,2,2,1)$, $T^\lambda$ and a tabloid $X$ in $T(\lambda)$ are the following: $$T^{\lambda}=\,\raisebox{21pt}{\begin{ytableau}\overline{1}& \overline{8} & \overline{5} \\\overline{2}  & \overline{6} \\\overline{7}  & \overline{3 }\\ \overline{4}  \end{ytableau}},\;X=\,\raisebox{21pt}{\begin{ytableau}\overline{7}& \overline{1} & \overline{4} \\\overline{2}  & \overline{5} \\\overline{6}  & \overline{3 }\\ \overline{8}  \end{ytableau}}.$$
	Then $X_1=\left\{\overline{7}, \overline{1} , \overline{4}\right\}$, $X_3=\left\{\overline{3},\overline{6}\right\}\subset\left[\overline{8}\right]$ and $X_{3,1}=3$, $X_{3,2}=6,$ $X_{3,3}=11$, $X_{4,2}=16.$
\end{Example}

\begin{Definition}
	\label{streamchannel}
	Given an affine permutation $w$, a subset $C\subset\mathcal{B}_w$ is called a \textit{stream} if it is invariant under $(n,n)$-translations and forms a chain under the southeast partial ordering $\le_{SE}$. The number of distinct $(n,n)$-translation classes of a stream $C$ is called the \textit{density} of $C$. A subset $C'\subset\mathcal{B}_w$ is called a \textit{anti-stream} if it forms a chain under the southwest partial ordering $\le_{SW}$. The number of entries in an anti-stream $C'$ (which is always finite) is called the \textit{density} of $C'$. 
	
\end{Definition}
From Lusztig we could associate $\lambda(w)=(d_1,d_2-d_1,d_3-d_2,\ldots)$ to any affine permutation $w$ where $d_i$ is the maximal one among the sums of densities of $i$ disjoint streams in $\mathcal{B}_w$.
\cite[Theorem 1.5]{greene1976some} guarantees that $\lambda(w)$ is a partition and is called \textit{the partition associated to} $w$. Moreover, $(d'_1,d'_1-d'_2,\ldots)$, where $d'_i$ is the maximal one among the sums of densities of $i$ disjoint anti-streams in $\mathcal{B}_w$, is the partition $\lambda(w)^{\Tr}$. 

 To determine the cell structure, we recall the affine matrix ball construction $\Phi$ from \cite{chmutov2018matrix}, which is a nice generalization of Viennot's geometric construction \cite{viennot1977forme} of the classical Robinson-Schensted correspondence.
\begin{align*}
\Phi:\widetilde{S_n} & \rightarrow \Omega=\bigsqcup_{\lambda\vdash n}\left\{(P,Q,\rho)\,\left|\, P, Q\in T(\lambda),\rho\in\mathbb{Z}^{\ell(\lambda)},\sum_{i=1}^{\ell(\lambda)}\rho_i=0\right.\right\}\\
w & \mapsto (P(w),Q(w),\rho(w)).
\end{align*}
We call $P(w),Q(w),\rho(w)$ to be the \textit{insertion tabloid}, \textit{recording tabloid} and \textit{weight} of $w$ respectively. And the common shape of $P(w)$ and $Q(w)$ is the \textit{associated partition} $\lambda(w)$. These satistics give the structure the Kazhdan-Lusztig cells:
\begin{Theorem}[\cite{lusztig1985two,shi2006kazhdan,jian1991generalized,chmutov2018matrix}]\label{cell:thm}
	\begin{enumerate}
		\item Two affine permutations are in the same two-sided cell iff they have the same associated partition.
		\item Two affine permutations are in the same left (resp. right) cell iff they have the same recording (resp. insertion) tabloid.
	\end{enumerate}
\end{Theorem}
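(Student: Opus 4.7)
The plan is to split the theorem into part (1) about two-sided cells and part (2) about one-sided cells, and in each case match two different descriptions of the same equivalence relation: the Kazhdan-Lusztig description coming from the $W$-graph, and the combinatorial description coming from the affine matrix ball construction $\Phi$.

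For part (1), I would recall Lusztig's $a$-function $a:\widetilde{S_n}\to\mathbb{Z}_{\ge 0}$, defined via the structure constants of the Kazhdan-Lusztig basis, and its foundational property that $a$ is constant on two-sided cells. The first step is to identify $a(w)$ with a combinatorial statistic of $\lambda(w)$, namely $a(w)=n(\lambda(w))$, which one can extract from Shi's work on sign types. From this, $\lambda(w)$ is a two-sided cell invariant. Next I would show that any two permutations $w,w'$ sharing the same $\lambda$ lie in a common two-sided cell: fix a standard representative, e.g. $w_0^\lambda$, the longest element in the parabolic subgroup of type $\lambda^{\Tr}$, and show every $w$ with $\lambda(w)=\lambda$ is two-sided-equivalent to $w_0^\lambda$ via a chain of elementary star operations and Coxeter conjugations that each preserve the stream/anti-stream statistics defining $\lambda(w)$.

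For part (2), I would invoke the affine matrix ball construction $\Phi$ of Chmutov-Pylyavskyy-Yudovina. Two structural facts are needed: (i) the maps $w\mapsto P(w)$ and $w\mapsto Q(w)$ are invariant under right, respectively left, affine Knuth relations; and (ii) these affine Knuth relations generate, respectively, the right and left Kazhdan-Lusztig cell equivalence relations. Fact (i) I would verify directly from the balls-and-channels algorithm underlying $\Phi$: a Knuth swap of adjacent triples affects the dual trajectory used to build only one of $P$ or $Q$. Fact (ii) splits further into a ``Knuth-implies-cell'' direction (follows from standard manipulations with simple reflections $s_i$, $i\in [\overline{n}]$, and the fact that multiplication by $s_i$ on the right keeps $P(w)$ constant when $i,i+1$ are in Knuth-swappable positions) and a ``cell-implies-Knuth'' direction, handled by an inductive descent to a canonical representative in each fiber of $\Phi$ using generalized $\tau$-invariants adapted to $\widetilde{S_n}$.

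The main obstacle is the ``cell-implies-Knuth'' direction in the affine setting, because the fiber of $\Phi$ over a triple $(P,Q,\rho)$ is a single element, but the fiber over a pair $(P,Q)$ (ignoring $\rho$) carries a $\mathbb{Z}^{\ell(\lambda)-1}$-worth of weights, and all these elements must lie in both the same left and the same right cell. One must therefore show that the weight shift $\rho\mapsto\rho+\alpha$ can be realized by a sequence of elementary moves which simultaneously preserve $P$ and $Q$; in the finite case this issue is absent, so the standard Vogan-type tau-invariant argument needs to be extended to accommodate the $s_0$ generator and the weight lattice. Once this refinement is in place, combining it with part (1) gives the theorem, since elements within one left (or right) cell automatically share their associated partition via the common shape of $P$ and $Q$.
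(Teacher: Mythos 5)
The paper does not prove Theorem~\ref{cell:thm}; it is stated as a citation of Lusztig, Shi, Xi, and Chmutov--Pylyavskyy--Yudovina, so your sketch cannot be compared against an in-paper argument. Your outline for part (1) (constancy of the $a$-function on two-sided cells, its identification with a statistic of $\lambda(w)$, then a chain to a canonical representative $w_0^\lambda$) follows the standard route and is sound as a sketch.

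Part (2) contains a genuine gap: the ``cell-implies-Knuth'' direction you build your strategy around is actually false in the affine setting, and the paper itself makes this visible. Theorem~\ref{knuthrho} in the paper says that the set of weight shifts $\rho(w')-\rho(w)$ realizable inside a single right Knuth class while fixing $Q$ (equivalently $P$ and $Q$) is exactly the monodromy group $G^R_w$, a specific proper subgroup of the trace-zero lattice; for rectangular $\lambda$ it is even trivial. Meanwhile the left cell $L_Q$ contains every $w'$ with $Q(w')=Q$, i.e.\ an infinite family with arbitrary $\rho$ and with $P$ ranging over all of $T(\lambda)$ (whereas a Knuth class only reaches tabloids $P$ of one residue of $\charge$ modulo $d_\lambda$, cf.\ Theorem~\ref{knuthQ}). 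So a left cell is a nontrivial disjoint union of left Knuth classes, which the paper notes immediately after Theorem~\ref{thmknuth}, and no sequence of Knuth moves can bridge them. You spot the weight obstruction but your proposed fix --- an extended Vogan-type tau-invariant argument that realizes $\rho\mapsto\rho+\alpha$ by moves preserving $P$ and $Q$ --- is precisely what the monodromy computation forbids. The actual proofs in the cited literature go through a fundamentally different device (Shi's admissible sign types, or Lusztig's description of cells in affine Weyl groups) rather than a refinement of Knuth equivalence; to close the gap you would have to import one of those arguments, not strengthen the tau-invariant step.
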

We denote $C_\lambda$ to be the two-sided cell containing affine permutations with associated patition $\lambda$, and $L_X$ (resp. $R_X$) to be the left (resp. right) cell containing affine permutations with recording (resp. insertion) tabloid $X$. In particular, the longest element $$w_0^\lambda=[\lambda^{\Tr}_1,\lambda^{\Tr}_1-1,\ldots,1,\lambda^{\Tr}_1+\lambda^{\Tr}_2,\lambda^{\Tr}_1+\lambda^{\Tr}_2-1,\ldots,\lambda^{\Tr}_1+1,\ldots,n,n-1\ldots,n-\lambda^{\Tr}_{\lambda_1}+1]$$ in $ S_{\lambda^{\Tr}_1}\times S_{\lambda^{\Tr}_2}\times\ldots \times S_{\lambda^{\Tr}_{\lambda_1}} \hookrightarrow\widetilde{S_n}$ is contained in $ L_{T^\lambda}\cap R_{T^\lambda}$.

We refer the readers to \cite{chmutov2018matrix} for the details of the algorithm of computing $\Phi$ and only state the formula in a very special case which we will use later.

\begin{Lemma}\label{lemAMBC}
	Suppose $n=ml$ for some $m,l\in\mbZ_{>0}$ and $w\in\widetilde{S_n}$ satisfies the following two conditions:
	\begin{enumerate}
		\item $w(i)<w(i+m)<\ldots<w(i+m(l-1))<w(i)+n$ for $i\in[m]$;
		\item $w(1+(j-1)m)>w(2+(j-1)m)>\ldots>w(jm)$ for $j\in[l]$.
	\end{enumerate}
Then $\lambda(w)=(l^m)$ and $\Phi(w)=(P,T^\lambda,\rho)$ where
\begin{align*}
P_i = & \left\{\overline{w(m+1-i)},\overline{w(2m+1-i)},\ldots,\overline{w(n+1-i)}\right\},\\
\rho_i= &  \sum_{j=1}^{l}\left\lceil\frac{w(jm+1-i)}{n}\right\rceil-l,
\end{align*}
 for $i\in[m]$.

\end{Lemma}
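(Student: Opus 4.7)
The plan is to first determine the shape $\lambda(w)$ from the stream/anti-stream picture, and then run the affine matrix ball construction iteratively, exploiting the rigid block structure of $w$.

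\textbf{Shape.} Condition~(1) exhibits $m$ pairwise disjoint streams
\[
C_i=\{(i+km,\,w(i+km)) : k\in\mbZ\},\qquad i\in[m],
\]
each of density $l$: the $(n,n)$-invariance is automatic and the $\le_{SE}$-chain property is exactly~(1). Hence $d_m\ge ml=n$, forcing $d_m=n$ and so $\ell(\lambda(w))\le m$. Dually, condition~(2) gives $l$ disjoint anti-streams---the blocks $\{(j-1)m+1,\ldots,jm\}$---each of density $m$, so $d'_l\ge n$ and $\lambda_1(w)\le l$. Since $|\lambda(w)|=ml$, the only partition fitting an $m\times l$ box is $\lambda(w)=(l^m)$.

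\textbf{AMBC iteration.} The $C_i$ are precisely the density-$l$ channels. I would run one step of the CPY algorithm: the step selects one of them (according to the channel-selection convention), reads off one row of $P$ as its column entries reduced mod $n$, one row of $Q$ as its row entries reduced mod $n$, and records one entry of $\rho$ counting the vertical wrap-around of the channel. The key point is that the shadow configuration produced by the forwarding rule still satisfies the analogues of (1) and (2) with $m$ replaced by $m-1$ (and the same $l$), so an induction on $m$ completes the computation of both $P$ and $Q$. Since the row-indices of the channels extracted across iterations are precisely the residue classes $\{\overline{i},\overline{m+i},\ldots,\overline{n-m+i}\}$, the tabloid $Q$ coincides with $T^{(l^m)}$ by construction; the rows of $P$ appear in the reversed order predicted by the formula for $P_i$.

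\textbf{Weight.} Summing the vertical wrap-around contributions of the $l$ balls constituting $C_{m+1-i}$ yields $\sum_{j=1}^l\lceil w(jm+1-i)/n\rceil$, and the additive constant $-l$ normalizes so that $\sum_i\rho_i=0$; this last sum condition can be verified directly using $\sum_{k=1}^n w(k)=n(n+1)/2$, which forces $\sum_{k=1}^n\lceil w(k)/n\rceil=n$.

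\textbf{Main obstacle.} The principal difficulty is the careful bookkeeping of a single AMBC step. One must (a) verify that the shadow-forwarding preserves conditions (1)--(2) on the reduced configuration so that the induction is legitimate, and (b) reconcile the CPY channel-selection convention with the reversed indexing $P_i=\{\overline{w(m+1-i)},\ldots\}$ appearing in the statement---essentially showing that the algorithm extracts the ``topmost'' channel first and thus fills the rows of $P$ from $P_m$ upward toward $P_1$.
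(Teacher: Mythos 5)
The paper itself gives no proof of this lemma: it is stated as a special-case read-off of the Chmutov--Pylyavskyy--Yudovina algorithm, with the reader referred to \cite{chmutov2018matrix}. So there is no ``paper approach'' to compare against, and your sketch must be judged on its own.

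Your shape computation is correct and clean: the $m$ disjoint streams $C_i$ of density $l$ force $d_m=n$ and hence $\ell(\lambda(w))\le m$, the $l$ disjoint block anti-streams of density $m$ force $d'_l\ge n$ and hence $\lambda_1(w)\le l$, and $|\lambda(w)|=ml$ then pins down $\lambda(w)=(l^m)$. The observation $\sum_i\rho_i=0$ via $\sum_k\lceil w(k)/n\rceil=n$ is also correct.

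The genuine gap is in the central ``AMBC iteration'' step. You assert that one pass of the algorithm selects a channel and then reads off \emph{both} the new $P$-row from that channel's columns and the new $Q$-row from that same channel's rows. That cannot be right, and the lemma's own formulas already show why. The first row produced must be $Q_1=T^\lambda_1=\{\overline{1},\overline{m+1},\ldots\}$, which is the set of row-residues of $C_1$, while $P_1=\{\overline{w(m)},\overline{w(2m)},\ldots\}$ is the set of column-residues of $C_m$. These are different streams for $m\ge 2$, so the two rows cannot both be read from a single selected $C_i$; the only index for which $m+1-i=i$ is $i=(m+1)/2$. In the CPY algorithm the $Q$-entry of a level comes from the NE end of the zig-zag (anti-stream) at that level and the $P$-entry from its SW end; here the zig-zags are precisely the blocks $\{(1+\kappa m,w(1+\kappa m)),\ldots,((\kappa+1)m,w((\kappa+1)m))\}$, whose NE ends lie on $C_1$ and whose SW ends lie on $C_m$. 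This is what actually produces the asymmetry between $P$ and $Q$ in the statement, and no after-the-fact ``reversal'' of the rows of $P$ (which the algorithm does not perform) is needed or available. Your claim that ``the row-indices of the channels extracted across iterations are precisely the residue classes $\{\overline i,\ldots\}$'' is therefore aimed at the wrong object: the channel rows control the numbering and hence $\rho$, not $Q$.

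Beyond this, the two points you yourself flag as the ``main obstacle'' --- that the back-corner-post forwarding turns the configuration into one satisfying an $(m-1,l)$-analogue of conditions (1)--(2) (note it is no longer the ball configuration of an affine permutation, only a partial periodic configuration, so the analogue needs to be stated carefully), and that the CPY channel convention together with the numbering produces the stated $\rho_i$ --- are left unverified. Until the $P$/$Q$ readout is corrected as above and those two computations are carried out, the proposal is an outline rather than a proof.
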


Now we describe the Weyl group symmetry of the fiber of the inverse of $\Phi$.
\begin{Definition}[\cite{chmutov2017monodromy}]\label{deflch}
	Suppose $\lambda_i=\lambda_{i+1}$. For $X\in T(\lambda)$, the \textit{local charge} $\lch_i(X)$ at row $i$ is defined to be the  smallest integer satisfying $X_{i+1,j+\lch_i(X)}\ge X_{i,j}$ for all $j\in\mbZ$. And the \textit{charge matching} at row $i$ is the map $\{X_{i,j}\mid j\in\mbZ\}\rightarrow\{X_{i+1,j}\mid j\in\mbZ\}$ via $X_{i,j}\mapsto X_{i+1,j+\lch_i(X)}.$ If $\lambda_i>\lambda_{i+1}$, we define $\lch_i(X)=0$.
\end{Definition}

\begin{Definition}
	Given $X\in T(\lambda)$, the \textit{symmetrized offset constant} $s(X)\in \mbZ^{\ell(\lambda)}$ of $X$ is defined as:
	$$s_i(X)=\sum_{j=i'}^{i-1}\text{lch}_j(X),$$
	where $i'$ is the first row in $\lambda$ with length $\lambda_i$. The \textit{charge} of $X$ is defined to be:
	$$\charge(X)=\sum_{i=1}^{\ell(\lambda)-1}i\cdot\text{lch}_i(X).$$
\end{Definition}

\begin{Definition}
	The weight $\rho$ in the triple $(P,Q,\rho)$ is called \textit{dominant}, if $\rho-{s}(P)+{s}(Q)$ is increasing segmentwise according to the part sizes of $\lambda$, i.e., for each $i$, either $\lambda_i>\lambda_{i+1}$, or $\lambda_i=\lambda_{i+1}$ and $(\rho-{s}(P)+{s}(Q))_i<(\rho-{s}(P)+{s}(Q))_{i+1}.$ And we define $$\Omega_{\text{dom}}:=\bigsqcup_{\lambda\vdash n}\left\{(P,Q,\rho)\in \Omega\mid \rho\text{ is dominant in }(P,Q,\rho)\right\}.$$
	The \textit{dominant representative} $\rho'$ of $\rho$ in the triple $(P,Q,\rho)$ can be computed by
	$$\rho'=(\rho-{s}(P)+{s}(Q))^{\dom}+{s}(P)-{s}(Q),$$
	where $(\rho-{s}(P)+{s}(Q))^{\dom}$ is the segmentwise increasing rearrangement of $\rho-{s}(P)+{s}(Q)$ according to part sizes of $\lambda.$
\end{Definition}

In fact, $\Phi$ is a bijection between $\widetilde{S_n}$ and $\Omega_{\dom}$, and its inverse can be extended to $$\Psi:\Omega\rightarrow\widetilde{S_n}.$$ We refer readers to \cite{chmutov2018matrix} for details of $\Psi$ and only point out the following crucial result:
\begin{Theorem}[\cite{chmutov2018matrix}]
	\label{thminverseAMBC}
	For any $w\in\widetilde{S_n}$, $\Psi(\Phi(w))=w,$ and for any triple $(P,Q,\rho)\in \Omega$, we have $\Phi(\Psi(P,Q,\rho))=(P,Q,\rho')$, where $\rho'$ is the dominant representative of $\rho.$
\end{Theorem}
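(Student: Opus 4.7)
The plan is to prove the two assertions by unwinding the construction of $\Phi$ and $\Psi$ as inverse procedures, keeping careful track of the offset data that becomes the weight $\rho$. Conceptually, $\Phi$ peels off streams (and anti-streams) from $\mcB_w$ greedily: at each stage one extracts a maximal-density southeast chain (up to $(n,n)$-translation), records its southwest endpoints as a row of $P$, its northeast endpoints as a row of $Q$, and a column offset as an entry of $\rho$, then removes the chain and iterates. The map $\Psi$ runs the same procedure in reverse, placing back the balls one row-pair at a time. I would organize the proof as: (a) $\Phi(w)\in\Omega_{\dom}$ for every $w\in\widetilde{S_n}$; (b) $\Psi\circ\Phi=\mathrm{id}_{\widetilde{S_n}}$; (c) for arbitrary (not necessarily dominant) triples, $\Phi\circ\Psi$ replaces $\rho$ by its dominant representative $\rho'$.

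For (a) and (b) I would induct on $\ell(\lambda(w))$. The base case $\ell=1$ is essentially a one-row computation: the stream of density $n$ is $\mcB_w$ itself, so $P,Q$ have a single row and $\rho=0$ (automatically dominant). For the inductive step, the affine version of Greene's theorem (as stated in the setup of Section~2.2 and proved in \cite{chmutov2018matrix}) guarantees that the first row extracted has length $\lambda_1$ and that the residual ball configuration is again the matrix of some affine permutation whose associated partition is $(\lambda_2,\lambda_3,\ldots)$. Dominance of the output weight follows because the algorithm, when faced with a tie (equal-length rows $\lambda_i=\lambda_{i+1}$), breaks it using the charge matching, which is exactly the condition that $(\rho-s(P)+s(Q))_i<(\rho-s(P)+s(Q))_{i+1}$. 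Then $\Psi$ applied to $\Phi(w)$ places back each extracted stream in the same positions, recovering $w$.

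For (c), the main point is that $\Psi$ is invariant under the action of the subgroup of $\mbZ^{\ell(\lambda)}/\langle(1,\ldots,1)\rangle$ that permutes streams of equal density. Concretely, if $\lambda_i=\lambda_{i+1}$, then rotating the pairing of $(n,n)$-translates between the $i$-th and $(i+1)$-st stream contributes $(0,\ldots,0,+1,-1,0,\ldots,0)$ to $\rho$ while leaving the underlying subset $\mcB_{w}\subset\mbZ\times\mbZ$ unchanged; so $\Psi(P,Q,\rho)=\Psi(P,Q,\rho'')$ for any $\rho''$ in the same orbit. Combining this with (a), if we start from any $(P,Q,\rho)\in\Omega$, then $\Psi(P,Q,\rho)=\Psi(P,Q,\rho')$ where $\rho'$ is the (unique) dominant representative, and applying $\Phi$ returns $(P,Q,\rho')$ by part (b).

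The principal obstacle is the inductive step in (a)--(b): one must show that the greedy choice of a maximal stream is essentially canonical and that its removal leaves an object that is again the matrix of an affine permutation (not merely some $(n,n)$-periodic ball configuration). In the affine setting, a maximal stream is only determined up to translates of individual balls, and the recorded offset $\rho_1$ depends on the chosen representatives; showing that the algorithm's tie-breaking rule produces a well-defined and dominant $\rho$, compatibly with iteration, is the delicate combinatorial core. This is precisely where I would rely on the structural results of \cite{chmutov2018matrix} on the affine matrix ball construction, together with the affine Greene identity, rather than attempting a direct calculation.
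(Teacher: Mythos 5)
The paper does not prove this theorem; it is imported verbatim from \cite{chmutov2018matrix} (see their Theorems on inverse AMBC and the fiber structure), so there is no in-paper argument for you to match. Your sketch is in the right general spirit, but it contains a genuine misdescription of the algorithm and a circularity at the end.

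The conceptual error is in your description of a single pass of $\Phi$. The AMBC does not peel off a maximal-density stream (a chain under $\le_{SE}$) and record its southwest and northeast endpoints. Rather, each pass numbers the balls by longest-chain length (a proper numbering, whose choice is exactly where the $\rho$-ambiguity enters), groups equally-numbered balls into \emph{zig-zags} --- the affine analogue of Viennot's shadow lines, which run transverse to streams --- and records the column and row coordinates of the zig-zags' outer corners as the next row of $P$ and $Q$, with the back (inner) corner posts becoming the new ball configuration. Greene's theorem guarantees that the number of zig-zags in this pass equals $\lambda_1$, but the entries of $P_1$ and $Q_1$ are not stream endpoints, and removing a single stream from $\mcB_w$ does not yield a ball configuration on which the recursion can proceed. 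This matters: the well-definedness of $\Psi$, the dominance of the output weight, and the Knuth-invariance of $P$ and $Q$ (Theorem~\ref{thmknuth}, Proposition~\ref{propdecent}) all depend on the zig-zag/back-corner-post structure, not on stream extraction, so the inductive step as you have set it up would not close.

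Your part (c) correctly identifies that $\Psi$ factors through the orbit of $\rho$ under the symmetry that shuffles equal-length rows, but the argument you offer for it (``rotating the pairing of $(n,n)$-translates'') is exactly the delicate point and is asserted rather than proved. And the final paragraph explicitly defers to ``the structural results of \cite{chmutov2018matrix}'' --- but Theorem~\ref{thminverseAMBC} \emph{is} one of those structural results, so as written the argument is circular. If the intent is merely to record that this is a cited theorem, that is fine and matches what the paper does; if the intent is to give an independent proof, you would need to replace the stream-peeling picture with the actual zig-zag construction and prove the invariance of $\Psi$ on $\rho$-orbits directly.
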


The inverse of permutations behaves nicely under affine matrix ball construction:
\begin{Proposition}\label{propinvAMBC}
	For $w\in\widetilde{S_n}$, $\Phi(w^{-1})=(Q(w),P(w),(-\rho(w))')$ where $(-\rho(w))'$ is the dominant representative of $-\rho(w)$ in the fiber (of $\Psi$).
\end{Proposition}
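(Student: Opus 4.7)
The strategy is to reduce the statement to an assertion about $\Psi$ via Theorem \ref{thminverseAMBC}. Since $\Phi(w^{-1}) \in \Omega_{\dom}$ is the unique dominant triple in the fiber $\Psi^{-1}(w^{-1})$, it suffices to prove
$$\Psi(Q(w),\,P(w),\,-\rho(w)) \;=\; w^{-1}.$$
Applying $\Phi$ to both sides and invoking Theorem \ref{thminverseAMBC} then yields $\Phi(w^{-1}) = (Q(w), P(w), (-\rho(w))')$, where the prime records the dominant representative forced by the image of $\Phi$ lying in $\Omega_{\dom}$.

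To establish this equality I would exploit the diagonal transposition symmetry of the ball configuration. Associating $w$ with $\mathcal{B}_w = \{(i,w(i)) : i \in \mathbb{Z}\}$, we have
$$\mathcal{B}_{w^{-1}} \;=\; \{(j,i) : (i,j) \in \mathcal{B}_w\},$$
which is the reflection across the NW--SE diagonal. This reflection preserves $(n,n)$-translation invariance, sends $\le_{SE}$-chains to $\le_{SE}$-chains and $\le_{SW}$-chains to $\le_{SW}$-chains, and preserves the densities of streams and anti-streams; in particular $\lambda(w^{-1}) = \lambda(w)$ via Greene's theorem, consistent with $P(w^{-1}), Q(w^{-1})$ sharing the shape of $P(w), Q(w)$. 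The algorithm $\Phi$ of \cite{chmutov2018matrix} constructs $P$ by recording column data (values of $w$) and $Q$ by recording row data (positions) of the streams as they are peeled off; the reflection interchanges these roles, producing $P(w^{-1}) = Q(w)$ and $Q(w^{-1}) = P(w)$. The weight $\rho$ encodes a net signed shift of the successively peeled streams across fundamental $(n,n)$-domains, and reflecting the configuration reverses the orientation of these shifts, so that the reflected stream decomposition carries weight $-\rho(w)$. Consequently the inverse algorithm $\Psi$, applied to $(Q(w), P(w), -\rho(w))$, reconstructs the transposed ball configuration $\mathcal{B}_{w^{-1}}$ and hence returns $w^{-1}$.

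The main obstacle is the bookkeeping for $\rho$: one must trace the stream-peeling and stream-reinsertion steps of $\Phi$ and $\Psi$ from \cite{chmutov2018matrix} using the local-charge and offset-constant statistics of Definition \ref{deflch} and those following it, and verify that the sign-reversal under diagonal reflection is compatible with the fact that $-\rho(w)$ need not be dominant. This non-dominance is exactly why the proposition is phrased with the dominant representative $(-\rho(w))'$: the identity $\Psi(Q(w),P(w),-\rho(w)) = w^{-1}$ lives at the level of $\Psi$, but passing back through $\Phi$ via Theorem \ref{thminverseAMBC} replaces $-\rho(w)$ by its dominant representative, yielding the stated formula.
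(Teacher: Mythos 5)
The paper states this proposition without proof — it is treated as a symmetry property of the affine matrix ball construction analogous to Sch\"utzenberger's symmetry theorem for classical RSK, and is implicitly attributed to the framework of \cite{chmutov2018matrix} and \cite{chmutov2017monodromy}. So there is no proof in the paper to compare against; I can only assess your argument on its own.

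Your reduction is correct as a matter of logic: if one shows $\Psi(Q(w),P(w),-\rho(w))=w^{-1}$, then applying $\Phi$ and Theorem \ref{thminverseAMBC} produces exactly the stated $\Phi(w^{-1})=(Q(w),P(w),(-\rho(w))')$. The diagonal transposition of $\mathcal{B}_w$ is also the right geometric mechanism, and the $P\leftrightarrow Q$ interchange you assert is the natural affine analogue of the classical statement. Two caveats. First, the AMBC of \cite{chmutov2018matrix} peels off streams by locating a distinguished \emph{channel} (a stream chosen by a specific tie-breaking rule) and recording numberings relative to it; the fact that this procedure commutes with transposition is not a triviality and needs to be argued, not merely asserted by analogy with Viennot's shadow lines. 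Second, and more substantially, your treatment of $\rho$ is where the content of the proposition actually lives, and your sketch does not reach it. Saying ``the reflected stream decomposition carries weight $-\rho(w)$'' glosses over the fact that the weight in the CPY algorithm is defined from the coordinates of the \emph{back corner posts}, and under transposition those coordinates swap, not simply negate; the precise effect depends on how the offset of the $P$-channel relative to the $Q$-channel is normalized. Moreover, $\Phi$ always outputs a dominant weight, so you cannot read off ``$-\rho(w)$'' directly from the forward algorithm on the transposed configuration. The clean target is the equivalent formulation the paper records, $\widetilde{\rho}(w)=-\widetilde{\rho}(w^{-1})^{\text{s.}\mathrm{rev}}$, in which both the sign change \emph{and} the segmentwise reversal must come out of the transposition of back-corner-post coordinates (the reversal is forced by the fact that negating a segmentwise increasing vector yields a segmentwise decreasing one). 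That computation is the missing step; you name it as ``the main obstacle'' but it is in fact the whole proposition once the $P,Q$ swap is granted.
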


We define \begin{equation*}\widetilde{\rho}(w)=\rho(w)-{s}(P(w))+{s}(Q(w)),
\end{equation*}
to be the \textit{centralized weight} of $w$, which is a segmentwise increasing vector according to the part sizes of $\lambda$, and the above proposition is equivalent to saying \begin{equation*}
\widetilde{\rho}(w)=-\widetilde{\rho}(w^{-1})^{\text{s.}\rev},
\end{equation*}
where $\widetilde{\rho}(w^{-1})^{\text{s.}\rev}$ is the segmentwise reverse of the vector $\widetilde{\rho}(w^{-1})$.

We end this subsection with the following result on how rotation interacts with affine matrix ball construction.
\begin{Lemma}
	\label{rotationAMBC}
	For any $w\in\widetilde{S_n}$ and $k\in[0,n-1]$, there is $$\Phi\left(\phi^k(w)\right)=\left(P(w)+k,Q(w)+k,\rho(w)+\delta^k\left(P(w)\right)-\delta^k\left(Q(w)\right)\right)$$
	where 
	$$ \delta^k_i\left(X\right)=\sum_{j=1}^{\lambda_i(w)}\mathbb{1}_{[n-k+1,n]}\left(X_{i,j}\right),\quad i\in[\ell(\lambda(w))].$$
	
\end{Lemma}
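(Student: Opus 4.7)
The plan is to reduce the identity to a direct bookkeeping statement on the matrix ball configuration and then invoke the equivariance of AMBC under diagonal translations. First, from $s=[2,3,\ldots,n+1]$ and $\phi(w)=sws^{-1}$ one computes $\phi^k(w)(i)=w(i-k)+k$ for all $i\in\mbZ$. Consequently, the ball configuration $\mcB_{\phi^k(w)}$ is the image of $\mcB_w$ under the diagonal translation $(r,c)\mapsto(r+k,c+k)$.

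Since every ingredient of the AMBC algorithm — the set of balls, the southeast and southwest partial orders, and the $(n,n)$-translation equivalence — is invariant under diagonal translations, the stream and channel decomposition of $\phi^k(w)$ coincides with the diagonal $(k,k)$-shift of that of $w$. In particular $\lambda(\phi^k(w))=\lambda(w)$. Since the entries of $P$ and $Q$ are residues modulo $n$ of the column and row coordinates of the balls in each stream, the $(k,k)$-shift increments every entry of each tabloid by $k$ modulo $n$, yielding $P(\phi^k(w))=P(w)+k$ and $Q(\phi^k(w))=Q(w)+k$.

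For the weight, recall that $\rho_i$ records the total horizontal window offset $\sum_r(\lceil c_r/n\rceil-1)$ of the balls $(r,c_r)$ in stream $i$ whose rows $r$ lie in the fundamental domain $[1,n]$; the formula in Lemma \ref{lemAMBC} for the rectangular case makes this explicit. Under the diagonal shift by $(k,k)$, I would split the balls into those with $r\in[1,n-k]$ (which remain in the fundamental domain with column incremented by $k$) and those with $r\in[n-k+1,n]$ (which leave the domain, so their $(n,n)$-translate, with coordinates decreased by $n$, must be used instead). A direct case check of $\lceil\cdot/n\rceil$ shows the contribution to $\rho_i$ changes by $\mathbb{1}_{\overline{c_r}\in[n-k+1,n]}$ in the first case and by $-1+\mathbb{1}_{\overline{c_r}\in[n-k+1,n]}$ in the second. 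Summing over stream $i$ gives $\rho_i(\phi^k(w))-\rho_i(w)=\delta^k_i(P(w))-\delta^k_i(Q(w))$, since the indicator terms aggregate into $\delta^k_i(P(w))$ while the $-1$ contributions (one per ball with row residue in $[n-k+1,n]$) aggregate into $-\delta^k_i(Q(w))$.

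The main obstacle will be the careful window-count bookkeeping in the third step, especially verifying the sign contribution from the $(n,n)$-translates. Rather than appealing to the general AMBC algorithm abstractly, it seems cleanest to first establish the identity in the rectangular setting of Lemma \ref{lemAMBC}, where both sides admit closed formulas that can be compared after a simple reindexing, and then reduce the general case to this one by observing that each ball's contribution to $\rho_i$ depends only on the stream it belongs to and on a chosen fundamental domain, both of which behave equivariantly under diagonal translation. That the resulting triple lies in $\Omega_{\text{dom}}$ is automatic, since $\phi^k(w)\in\widetilde{S_n}$ has a single well-defined image under $\Phi$, which our construction produces step by step.
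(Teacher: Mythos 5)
Your proposal follows the same route as the paper: the diagonal $(k,k)$-shift of the ball configuration, commutativity of every step of AMBC with that shift, and a $\lceil c/n\rceil$ bookkeeping argument for the change in $\rho$. One small correction: $\rho_i$ is read off in AMBC from the \emph{back corner posts} produced at round $i$ of the algorithm, not directly from balls of $w$ lying in a stream (these coincide in the special case of Lemma \ref{lemAMBC} but not in general), so the statement should track back corner posts; since those also shift by exactly $(k,k)$ along with the numberings, your window-offset case analysis applies verbatim to them (columns giving $P_i$, rows giving $Q_i$), and the suggested detour through the rectangular case is unnecessary.
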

\begin{proof}
	The matrix balls of $\phi^k(w)$ come from that of $w$ by shifting southwestwards by $(k,k)$. The relative positions of the matrix balls do not change, so do the numberings at each step of $\Phi$ in \cite{chmutov2018matrix}. Therefore the coordinate of the back corner posts at each step of the algorithm will shift southeastwards by $(k,k)$ as well. Hence $P(\phi^k(w))=P(w)+k$, $Q(\phi^k(w))=Q(w)+k$, and
	\begin{equation*}
	\rho_i(\phi^k(w))=\rho_i(w)+\sum_{j=1}^{\lambda_i(w)}\left\{
	\begin{array}{lll}
	1, & \mbox{if $Q_{i,j}(w)\in [n-k],P_{i,j}(w)\in[n-k+1,n]$}\,;\\
	-1, & \mbox{if $P_{i,j}(w)\in [n-k],Q_{i,j}(w)\in[n-k+1,n]$}\,;\\
	0, & \text{otherwise.}
	\end{array}\right.
	\end{equation*}
	
\end{proof}

\subsection{Knuth Equivalence Classes}
First we define decent sets for both permutations and tabloids. 
\begin{Definition}
	Given an affine permutation $w$, we define its \textit{right descent set} $R(w)$ and \textit{left descent set} $L(w)$ as:
	\begin{align*}
	R(w) = & \left\{\overline{i}\in[\overline{n}]\mid w(i)>w(i+1)\right\},\\
	L(w) = & \left\{\overline{i}\in[\overline{n}]\mid w^{-1}(i)>w^{-1}(i+1)\right\}.
	\end{align*}
	Given a tabloid $X$, the $\tau$-\textit{invariant} of $X$ is defined as:
	\begin{equation*}
	\tau(X)=\left\{\overline{i}\in[\overline{n}]\mid \overline{i} \text{ lies in a strictly higher row than }\overline{i+1}\text{ in }X\right\}.
	\end{equation*}
\end{Definition}
Decent sets interact nicely with affine matrix ball construction:
\begin{Proposition}[\cite{chmutov2017monodromy}]\label{propdecent}
	For $w\in\widetilde{S_n}$, $L(w)=\tau(P(w))$ and $R(w)=\tau(Q(w))$.
\end{Proposition}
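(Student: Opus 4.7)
The plan is to reduce the two stated equalities to the single claim $R(w)=\tau(Q(w))$. By definition $L(w)=R(w^{-1})$, and by Proposition \ref{propinvAMBC} we have $P(w^{-1})=Q(w)$ and $Q(w^{-1})=P(w)$ as tabloids (the weight components are irrelevant to $\tau$). Hence applying the right-descent identity to $w^{-1}$ would yield $L(w)=R(w^{-1})=\tau(Q(w^{-1}))=\tau(P(w))$, recovering the other half of the proposition.

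To prove $R(w)=\tau(Q(w))$, I would first observe that both sides depend only on the tabloid $Q(w)$. The right-hand side does so tautologically. For the left-hand side, Theorem \ref{cell:thm}(2) identifies the fibers of $w\mapsto Q(w)$ with left Kazhdan-Lusztig cells of $\widetilde{S_n}$, and it is standard that the right descent set is constant on each left cell (this is the affine analog of Vogan's $\tau$-invariance; it follows from Shi's affine Knuth relations preserving right descents step by step). It therefore suffices to verify the identity on a single convenient representative per fiber.

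A natural representative of the fiber over $T^\lambda$ is the parabolic longest element $w_0^\lambda\in L_{T^\lambda}\cap R_{T^\lambda}$, whose window notation is given explicitly after Theorem \ref{cell:thm}. Both $R(w_0^\lambda)$ and $\tau(T^\lambda)$ consist of exactly those $\overline{i}\in[\overline{n}]$ such that $i$ is \emph{not} a block-boundary of the embedding $S_{\lambda^{\Tr}_1}\times\cdots\times S_{\lambda^{\Tr}_{\lambda_1}}\hookrightarrow\widetilde{S_n}$, so the equality is visible by inspection. For an arbitrary tabloid $Q$ of shape $\lambda$, I would pass from $T^\lambda$ to $Q$ via a composition of the rotation map $\phi$ and the standard $S_n$-action on tabloids. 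Lemma \ref{rotationAMBC} ensures that $\phi$ shifts $Q(w)$ and the window notation of $w$ compatibly, so both $R$ and $\tau$ transform in lockstep and the identity is preserved; the $S_n$-action is handled analogously by conjugation, and these two operations together act transitively on $T(\lambda)$.

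The principal obstacle is the cell-constancy of the right descent set in the affine setting. In the finite symmetric group this is immediate from Knuth equivalence, but in $\widetilde{S_n}$ it relies on the more delicate affine Knuth moves and a $\tau$-invariance argument for affine Kazhdan-Lusztig cells — an input I would cite from \cite{shi2006kazhdan,chmutov2017monodromy} rather than reprove.
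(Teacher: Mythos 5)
The paper does not prove this proposition; it is cited directly from \cite{chmutov2017monodromy}, so there is no internal proof to compare against. Your overall strategy---reduce to $R(w)=\tau(Q(w))$ via $L(w)=R(w^{-1})$ and Proposition~\ref{propinvAMBC}, use constancy of the right descent set on left cells (a standard Kazhdan--Lusztig fact, $y\le_L w\Rightarrow R(w)\subseteq R(y)$), verify on $w_0^\lambda$, and then propagate to all of $T(\lambda)$---is a sensible outline, and the first three steps are sound.

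The gap is in the propagation step. You claim that the $S_n$-action on tabloids is realized by conjugation on $\widetilde{S_n}$ in a way that transforms $R(w)$ and $\tau(Q(w))$ ``in lockstep.'' This fails. Conjugation by $\sigma\in S_n$ does not commute with the affine matrix ball construction, and it does not even preserve the associated partition $\lambda(w)$, let alone send $Q(w)$ to $\sigma\cdot Q(w)$. For instance, with $n=3$, $s_1=[2,1,3]$ has $Q(s_1)$ of shape $(2,1)$, while $s_2 s_1 s_2^{-1}=[3,2,1]$ has $Q$ of shape $(1,1,1)$; conjugation has changed the shape. Likewise $R(\sigma w\sigma^{-1})$ is not $\sigma\cdot R(w)$ for general $\sigma$ (only for the rotation $\phi$, which respects the cyclic order). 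Rotation alone is not transitive on $T(\lambda)$: e.g. for $\lambda=(2,2)$, $n=4$, the six tabloids split into a $\phi$-orbit of size $4$ and one of size $2$. So after checking $T^\lambda$ and its rotates you have not exhausted the left cells, and no mechanism is supplied to reach the remaining ones. A correct argument would either prove the descent identity directly from the stream/channel steps of the AMBC algorithm (as \cite{chmutov2017monodromy} does), or, within your framework, construct for \emph{each} $Q\in T(\lambda)$ an explicit representative $w$ with $Q(w)=Q$ (e.g.\ an explicit element of $L_Q$ built from $Q$ in the spirit of Lemma~\ref{lemAMBC}) and verify $R(w)=\tau(Q)$ by inspection on that representative, rather than by transport of structure.
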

Now we are able to define Knuth moves.
\begin{Definition}
	Two affine permutations $w$ and $ws_i$ (resp. $s_iw$) are connected by a \textit{right (resp. left) Knuth move} at position $\overline{i}$ if $R(w)$ and $R(ws_i)$ (resp. $L(w)$ and $L(s_iw)$) are incomparable under the containment partial ordering. We therefore have an equivalence class named \textit{right (resp. left) Knuth class} generated by right (resp. left) Knuth moves $w\sim_{\text{RKC}} ws_i$ (resp. $w\sim_{\text{LKC}} ws_i$), and we denote $\text{RKC}_w$ (resp. $\text{LKC}_w$) to be the right (resp. left) Knuth class containing $w$.
	
	Two tabloids $X$ and $X'$ are connected by a \textit{Knuth move} if for some $i$, $X$ is obtained from $X'$ by interchanging $\overline{i}$ and $\overline{i+1}$ and $\tau(X)$ and $\tau(X')$ are incomparable. We call an equivalence class generated by Knuth moves, a \textit{Knuth class}.
	
\end{Definition}
The following remarkable theorem describes how the image of an affine permutations under affine matrix ball construction behaves after a Knuth move (we state the left Knuth version for convenience of the calculations later).
\begin{Theorem}[\cite{chmutov2017monodromy}]
	\label{thmknuth}
	Suppose $w\sim_{\LKC}s_kw$, then:
		\begin{enumerate}
			\item $Q(w)=Q(s_kw)$;
			\item $P(s_kw)$ differs from $P(w)$ by a Knuth move exchanging $\overline{i}$ and $\overline{i+1}$ for some $i\in\{k-1,k,k+1\}$;
			\item $\rho(s_kw)=\rho(w)$ if $\overline{i}\neq \overline{n}$, otherwise $\rho(s_kw)$ differs by $\rho(w)$ by subtracting 1 from row $k'$ and adding 1 to row $k$, where $\overline{i}=\overline{n}$ lies in row $k$ in $P(w)$ and $\overline{i+1}=\overline{1}$ lies in row $k'$ in $P(w)$.
		\end{enumerate}
\end{Theorem}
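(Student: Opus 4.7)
The plan is to run the affine matrix ball construction on $w$ and $s_k w$ in parallel, tracking iteration by iteration how the three outputs $P$, $Q$, and $\rho$ differ. Observe first that $s_k w$ is obtained from $w$ by swapping the values $k$ and $k+1$ (together with all their $(n,n)$-translates) in the window notation; equivalently, in the matrix-ball picture, the balls at positions $(w^{-1}(k),k)$ and $(w^{-1}(k+1),k+1)$ exchange columns. The hypothesis that $L(w)$ and $L(s_k w)$ are incomparable translates, via Proposition \ref{propdecent}, into the classical Knuth condition: either $w^{-1}(k-1)$ or $w^{-1}(k+2)$ lies strictly between $w^{-1}(k)$ and $w^{-1}(k+1)$ in cyclic order.

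First I would establish (1). At each iteration of the AMBC, the algorithm extracts a collection of maximal disjoint streams, whose row positions form one row of $Q$; these positions are determined purely by the southeast partial order on $\mathcal{B}_w$. Because the Knuth condition guarantees that some ball $(w^{-1}(i),i)$ with $\bar{i} \in \{\overline{k-1},\overline{k+2}\}$ lies strictly between the rows of the two swapped balls, the column swap changes no $\le_{SE}$ comparison except perhaps between the two swapped balls themselves. A case analysis shows that either both swapped balls lie in the same stream, in which case its row-set is unchanged, or they lie in distinct streams whose row-sets are also individually unchanged. Inducting on the number of iterations---the reduced permutations produced by the back-post construction again differ only by the analogous column swap, and the analogous Knuth condition persists---then yields $Q(w) = Q(s_k w)$.

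Next I would address (2). The tabloid $P$ records the column labels of each stream. A column swap at values $k$ and $k+1$ alters the labels appearing in a stream only when one of the two balls is selected; combined with the back-post construction, which at intermediate iterations may shift column labels by small amounts determined by the stream geometry, the net effect on $P$ is to exchange two consecutive residues $\bar{j}$ and $\overline{j+1}$ with $j \in \{k-1, k, k+1\}$. To check that the exchange is in fact a tabloid Knuth move, invoke Proposition \ref{propdecent} together with part (1): $\tau(P(w)) = L(w)$ and $\tau(P(s_k w)) = L(s_k w)$ are incomparable by hypothesis, which is precisely the Knuth-move condition on the resulting tabloids.

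Finally I would analyze (3). The entry $\rho_j$ counts, for the $j$th extracted stream, the total horizontal shift traversed within one period (divided by $n$). The column swap preserves this shift verbatim unless the two swapped values straddle a period boundary, which occurs precisely when $\bar{k} = \bar{n}$: swapping $n$ with $n+1$ moves one ball out of, and another ball into, the fundamental domain $[1,n]$. A direct bookkeeping then shows that the stream containing the ball with column label $\bar{n}$ in $P(w)$---lying in row $k$---loses one unit of winding, while the stream containing column label $\bar{1}$---lying in row $k'$---gains one unit, matching the stated formula. The main obstacle I expect is precisely this last step: one must verify that the back-post construction faithfully propagates the winding change to the correct rows through all iterations of the AMBC, and in particular that no other streams inadvertently absorb or contribute to the winding shift during subsequent iterations. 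This requires a careful induction showing that, at each stage, the reduced permutations still differ by the same kind of boundary-crossing swap, so the $\pm 1$ contributions accumulate in exactly two streams and cancel nowhere else.
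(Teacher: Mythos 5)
The paper does not prove this theorem: it is stated as a citation to \cite{chmutov2017monodromy}, so there is no in-paper argument to compare your proposal against. Taken on its own terms, your sketch sets up the correct framework --- translating incomparability of $L(w)$ and $L(s_kw)$ via Proposition~\ref{propdecent} into the condition that $w^{-1}(k-1)$ or $w^{-1}(k+2)$ lies strictly between $w^{-1}(k)$ and $w^{-1}(k+1)$, and tracking the three AMBC outputs iteration by iteration --- but as written it is an outline rather than a proof, and you acknowledge as much for part (3).

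Concretely: in part (1), the sentence ``a case analysis shows that either both swapped balls lie in the same stream, in which case its row-set is unchanged, or they lie in distinct streams whose row-sets are also individually unchanged'' is the entire content of $Q(w)=Q(s_kw)$ and is asserted rather than argued. The streams extracted at each AMBC step are not arbitrary maximal chains under $\le_{SE}$ but the specific channels fixed by the channel-numbering rule, so you must show that this numbering is stable under the adjacent-column swap; that is exactly where the Knuth hypothesis enters and it is not routine. In part (2), invoking Proposition~\ref{propdecent} together with part (1) does show that the exchange is a tabloid Knuth move \emph{once one knows} that $P(s_kw)$ differs from $P(w)$ by swapping two consecutive residues, but you do not actually prove that the swapped residue $\overline{i}$ lies in $\{\overline{k-1},\overline{k},\overline{k+1}\}$; the back-post shifts you mention ``by small amounts determined by the stream geometry'' could a priori displace the residue further, and ruling this out is again the substance of the claim. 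For part (3), you correctly identify the remaining work: one must push the $\pm 1$ winding change through every back-post iteration and show it lands only on the two rows of $P(w)$ containing $\overline{n}$ and $\overline{1}$, cancelling everywhere else. None of these steps looks misdirected, but each needs a genuine argument before the outline becomes a proof; the full details are in \cite{chmutov2017monodromy}.
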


Theorem \ref{thmknuth} tells a right (resp. left) cell is a disjoint union of right (resp. left) Knuth classes. And \cite{chmutov2017monodromy} gives a complete characterization of Knuth classes by specifying what $Q$ and $\rho$ could be like in each Knuth class. Denote $d_\lambda=\gcd(\lambda^{\Tr}_1,\lambda^{\Tr}_2,\ldots)$.
\begin{Theorem}[\cite{chmutov2017monodromy}, Theorem 8.6]
	\label{knuthQ}
	Let $X,X'\in T(\lambda)$, then $X$ and $X'$ are in the same Knuth class iff $$\charge(X)\equiv\charge(X')\quad(\MD d_\lambda).$$
\end{Theorem}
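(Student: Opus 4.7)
The plan is to prove both directions by analyzing how a single Knuth move alters the local charges $\lch_r$ contributing to $\charge(X)=\sum_r r\cdot\lch_r(X)$. For the forward implication, a Knuth move swaps $\overline{i}$ and $\overline{i+1}$ between two rows $r_1<r_2$; since $\lch_r$ depends only on rows $r$ and $r+1$, only the four values $\lch_{r_1-1},\lch_{r_1},\lch_{r_2-1},\lch_{r_2}$ can change. I would run a case analysis on the rows of $\overline{i-1}$ and $\overline{i+2}$ to compare the charge matchings before and after the swap. Away from the cyclic wrap $\overline{n}\to\overline{1}$, the individual changes telescope and the weighted sum is unchanged; when the wrap is involved (so $\overline{i}=\overline{n}$ and $\overline{i+1}=\overline{1}$), the weighted sum shifts by $\pm\lambda^{\Tr}_j$ for the column $j$ of the swapped entries, which is divisible by $d_\lambda$ by the definition $d_\lambda=\gcd(\lambda^{\Tr}_1,\lambda^{\Tr}_2,\ldots)$.

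For the backward implication I would transfer the problem to affine permutations via Theorem \ref{thmknuth}. Fix any $Q_0\in T(\lambda)$; by the bijectivity of $\Phi$ onto $\Omega_{\dom}$ in Theorem \ref{thminverseAMBC}, every $X\in T(\lambda)$ equals $P(w)$ for some $w\in L_{Q_0}$. Theorem \ref{thmknuth} says each left Knuth move on $w\in L_{Q_0}$ induces a Knuth move on $P(w)$ with a controlled $\rho$-shift, and conversely every Knuth move on the tabloid side arises this way after a suitable lift. Thus the Knuth class of $X$ is exactly the image under $P$ of the left Knuth orbit of any chosen lift $w$, modulo the $\rho$-ambiguity. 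Combining the left Knuth connectedness of $L_{Q_0}$ (granted modulo wraparound $\rho$-shifts) with Theorem \ref{thmknuth}(3), one sees that the only invariant separating tabloid Knuth classes is the residue of $\charge$ modulo the sublattice generated by these wraparound $\rho$-shifts, whose index is $d_\lambda$.

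The main obstacle is the backward implication, and specifically two subclaims that must be settled: first, that the $\rho$-shifts induced by wraparound Knuth moves generate a sublattice of $\{\rho\in\mbZ^{\ell(\lambda)}\mid\sum\rho_i=0\}$ of index exactly $d_\lambda$; and second, that $L_{Q_0}$ is left-Knuth-connected up to these $\rho$-shifts. The first subclaim reduces to a linear-algebra computation over $\mbZ$ in which the identity $d_\lambda=\gcd(\lambda^{\Tr}_1,\lambda^{\Tr}_2,\ldots)$ emerges naturally from the column-height structure of the shift vectors produced by Theorem \ref{thmknuth}(3). The second is a combinatorial connectivity argument inside a single left cell, and is the genuinely nontrivial input. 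Once both are established, a count of Knuth classes against residues of charge modulo $d_\lambda$ matches the two sides and completes the proof.
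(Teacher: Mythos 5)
This statement is quoted from \cite{chmutov2017monodromy} (Theorem 8.6); the present paper does not re-prove it, so there is no in-house proof to compare your attempt against. Judged on its own, your proposal has a plausible sketch of the forward implication but a genuine gap in the backward implication, and one of the subclaims you isolate is actually false.

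On the forward direction: the localization of the change to $\lch_{r_1-1},\lch_{r_1},\lch_{r_2-1},\lch_{r_2}$ is sound. But the assertion that ``the individual changes telescope'' away from the wraparound, and that the wraparound case contributes exactly $\pm\lambda^{\Tr}_j$, are both nontrivial and would need an actual case analysis of the charge matching. They are believable but not yet verified.

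On the backward direction, the first subclaim you state --- that the $\rho$-shifts produced by Theorem \ref{thmknuth}(3) at the wraparound generate a sublattice of $\{\rho\in\mbZ^{\ell(\lambda)}\mid\sum\rho_i=0\}$ of index exactly $d_\lambda$ --- is incorrect. Already for a rectangle $\lambda=(l^m)$, the entries $\overline{n}$ and $\overline{1}$ can sit in an arbitrary pair of rows, so the shifts $e_k-e_{k'}$ one obtains span the whole rank-$(m-1)$ sum-zero lattice (index $1$), while $d_\lambda=m>1$. The object that actually controls the $\rho$-ambiguity within a cell is the monodromy group $G^R_w$ of Theorem \ref{knuthrho}, which for a rectangle is \emph{trivial}, not full, so the relationship between $\rho$-shifts and $d_\lambda$ is the opposite of what your reduction requires. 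There is also a type mismatch in the plan: $\charge$ is a scalar and the $\rho$-shifts are vectors, so ``the residue of $\charge$ modulo the sublattice generated by these shifts'' is not a well-posed invariant without further work connecting the two. Finally, the reduction via Theorem \ref{thmknuth} presupposes that every Knuth move between tabloids lifts to a left Knuth move between affine permutations with the given $Q$; this surjectivity is plausible but is an additional claim that must be established. In short, the forward direction is on the right track but unverified, and the backward direction as outlined does not close; you would essentially need to reproduce the argument of \cite{chmutov2017monodromy} rather than deduce the result from the other statements quoted in this paper.
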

\begin{Definition}
	Given $w\in\widetilde{S_n}$, we define the \textit{monodromy group} $G^R_w$ based at $w$ to be $$G^R_w=\{\rho(w')-\rho(w)\mid w'\in\text{RKC}_{w}, Q(w')=Q(w)\}.$$
\end{Definition}
\begin{Theorem}[\cite{chmutov2017monodromy}, Theorem 7.28]
	\label{knuthrho}
	For any $w$ with associated partition $\lambda$, we have
	$$G^R_w=\left\{\left.\sum_{i=1}^ka_i\mathbb{1}_{m_i}\right| a_i\in\mbZ ,\sum_{i=1}^k a_im_i=0\right\}$$
	where $m_1>m_2>\ldots>m_k$ are distinct part sizes of $\lambda^{\Tr}$ and $\mathbb{1}_{m_i}\in\mbZ^{\ell(\lambda)}$ with 1's in the first $m_i$ rows and 0 after on. In particular, when $\lambda$ is a rectangle, the monodromy group is trivial.
\end{Theorem}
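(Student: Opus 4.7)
The plan is to establish both inclusions of $G_w^R$ with the stated lattice. The first step is to derive the right-Knuth analogue of Theorem \ref{thmknuth}: under a right Knuth move $w \to ws_k$, the insertion tabloid $P(w)$ is preserved, the recording tabloid $Q$ is modified by exchanging $\overline{i}$ and $\overline{i+1}$ for some $i \in \{k-1,k,k+1\}$, and the weight $\rho$ stays fixed unless the swap involves the pair $\{\overline{n},\overline{1}\}$, in which case $\rho$ changes by $\pm(\mathbf{e}_a - \mathbf{e}_b)$ where $a$ and $b$ are the rows of $\overline{n}$ and $\overline{1}$ in $Q$. This transfers directly from Theorem \ref{thmknuth} applied to $w^{-1}$ via Proposition \ref{propinvAMBC}, after checking that passage to the dominant representative does not contribute extra shifts on any single Knuth move.

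For the containment $\subseteq$, I would fix $w' \in \RKC_w$ with $Q(w') = Q(w)$ and choose a sequence of right Knuth moves $w = w_0 \to w_1 \to \cdots \to w_T = w'$. Record the row $r_t$ occupied by $\overline{n}$ in $Q(w_t)$; since $Q(w_T) = Q(w_0)$, the walk $(r_t)$ is closed, and the total $\rho(w')-\rho(w)$ decomposes as a sum of elementary contributions $\mathbf{e}_a - \mathbf{e}_b$ indexed by the $\rho$-altering steps. The main combinatorial claim is that, after cancellation along this closed walk, this total is a $\mathbb{Z}$-linear combination of the indicator vectors $\mathbb{1}_{m_i}$. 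This follows because, by Proposition \ref{propdecent} and Theorem \ref{knuthQ}, any closed walk in the tabloid Knuth graph cannot transfer $\overline{n}$ across a boundary between distinct column-length classes without a matching reverse transfer; what is well defined is the net flux across each length-class boundary, and grouping rows by their column-length class yields precisely the $\mathbb{1}_{m_i}$ decomposition. The constraint $\sum a_i m_i = 0$ then encodes the sum-zero condition $\sum_j \rho_j = 0$ inherited from the definition of $\Omega$.

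For the reverse containment $\supseteq$, I would exhibit, for each pair of distinct column lengths $m_i > m_j$, an explicit closed right-Knuth walk in $\RKC_w$ realizing the generator $m_j \mathbb{1}_{m_i} - m_i \mathbb{1}_{m_j}$ (a set of these generates the lattice together with the sum-zero condition). Such a loop can be built as a \emph{merry-go-round}: starting from a convenient representative, cyclically transport $\overline{n}$ once around the rows of column-length $m_i$ and oppositely around the rows of column-length $m_j$, with $\overline{1}$ shadowing the reverse motion; the counts of visits force the $(m_j, -m_i)$ ratio of coefficients, while the loop returns $Q$ to itself by construction. The main obstacle I expect is precisely this explicit loop construction: one must verify at every step that consecutive intermediate $Q$-tabloids are connected by a legal right Knuth move, which requires checking the $\tau$-incomparability conditions of Proposition \ref{propdecent} and carefully choosing the starting point inside $\RKC_w$ so that each intended swap is realized by a simple reflection acting as a Knuth move rather than a length-changing generator.
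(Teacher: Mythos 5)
This theorem is cited from \cite{chmutov2017monodromy} and is not proved in the present paper, so there is no in-paper argument to compare against; I am therefore assessing your sketch on its own terms. Your high-level plan --- control the per-step change of $\rho$ under Knuth moves, prove $\subseteq$ by a closed-walk argument, and prove $\supseteq$ by constructing explicit loops --- is a sensible outline and broadly mirrors the strategy of the cited source, but as written it falls short of a proof. For $\subseteq$, the ``net flux across length-class boundaries'' assertion is precisely what needs to be established; you state it as if it were evident, but no invariant is actually produced and no reduction is carried out, so the containment remains unproved. A smaller point: rather than re-deriving a right-Knuth analogue of Theorem \ref{thmknuth} and worrying about dominant representatives on each step, it is cleaner to pass to $w^{-1}$, apply Theorem \ref{thmknuth} verbatim to left Knuth moves, and transport the answer back via Proposition \ref{propinvAMBC} and the identity $\widetilde{\rho}(w)=-\widetilde{\rho}(w^{-1})^{\text{s.}\rev}$; the lattice in the statement is visibly stable under negation and segmentwise reversal, so nothing is lost.

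The decisive gap is in $\supseteq$. You claim the vectors $m_j\mathbb{1}_{m_i}-m_i\mathbb{1}_{m_j}$ generate the lattice $\left\{\sum_i a_i\mathbb{1}_{m_i}\mid\sum_i a_im_i=0\right\}$, but they do not when the $m_i$ share a common factor. Take $\lambda=(2,2,2,2,1,1)$, so $\lambda^{\Tr}=(6,4)$ and $(m_1,m_2)=(6,4)$. The constraint $6a_1+4a_2=0$ forces $(a_1,a_2)\in\mbZ\,(2,-3)$, so the group claimed by the theorem is the rank-one lattice $\mbZ\bigl(2\mathbb{1}_6-3\mathbb{1}_4\bigr)$, whereas your proposed generator is $4\mathbb{1}_6-6\mathbb{1}_4=2\bigl(2\mathbb{1}_6-3\mathbb{1}_4\bigr)$, an index-two sublattice. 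Thus even if the merry-go-round loops you describe could be verified legal at every step --- which you yourself flag as the delicate part, correctly --- they would establish only a proper sublattice of the target. A correct proof must produce loops realizing the primitive generators with coefficients $\bigl(m_j/g,\,-m_i/g\bigr)$, $g=\gcd(m_i,m_j)$, or otherwise exhibit a full generating set; this divisibility refinement is the genuinely hard part of the $\supseteq$ inclusion and is absent from the proposal.
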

Theorem \ref{knuthrho} indicates the right (resp. left) Knuth classes of rectangular type permutations are finite, which is the foundation of our study.

\begin{Lemma}\label{Porbit:combinatorial}
	Suppose $w\in\widetilde{S_n}$ satisfies
	$$w(i)<w(i+m)<\ldots<w(i+m(l-1))<w(i)+n,\quad\forall i\in[m]$$ and $u\in (S_m)^l$ is the unique permutation such that
	$$wu(1+(j-1)m)<wu(2+(j-1)m)<\ldots<wu(jm),\quad\forall j\in[l].$$ Then $$wu(i)<wu(i+m)<\ldots<wu(i+m(l-1))<wu(i)+n,\quad\forall i\in[m].$$
\end{Lemma}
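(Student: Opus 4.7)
The plan is to reduce the statement to a standard fact about order statistics. Since $u\in(S_m)^l$ permutes indices only within each block $\{1+(j-1)m,\ldots,jm\}$, the set $A_j:=\{w(1+(j-1)m),\ldots,w(jm)\}$ is invariant under the action of $u$, so $A_j=\{wu(1+(j-1)m),\ldots,wu(jm)\}$. The defining property of $u$ then says that $wu$ lists $A_j$ in increasing order on this block, so $wu(i+(j-1)m)$ equals the $i$-th smallest element of $A_j$; call this element $a^{(j)}_i$. The two inequalities we need are (a) $a^{(j)}_i<a^{(j+1)}_i$ for all $j\in[l-1]$ and $i\in[m]$, and (b) $a^{(l)}_i<a^{(1)}_i+n$ for all $i\in[m]$.

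Both reduce to the following sorting lemma: if $x_1,\ldots,x_m$ and $y_1,\ldots,y_m$ are distinct integers with $x_k<y_k$ for every $k\in[m]$, and $x^*_i$, $y^*_i$ denote the $i$-th smallest element of $\{x_k\}$, $\{y_k\}$ respectively, then $x^*_i<y^*_i$ for every $i$. To prove this, let $K=\{k\in[m]:x_k\ge x^*_i\}$, so $|K|=m-i+1$. For each $k\in K$ one has $y_k>x_k\ge x^*_i$, so at least $m-i+1$ elements of $\{y_k\}$ strictly exceed $x^*_i$. Since $y^*_i,y^*_{i+1},\ldots,y^*_m$ are the top $m-i+1$ elements of $\{y_k\}$, they must all exceed $x^*_i$; in particular $y^*_i>x^*_i$.

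For conclusion (a), apply this lemma with $x_k=w(k+(j-1)m)$ and $y_k=w(k+jm)$; the first hypothesis on $w$ supplies $x_k<y_k$ for each $k\in[m]$. For conclusion (b), apply it with $x_k=w(k+(l-1)m)$ and $y_k=w(k)+n$; the $i$-th smallest element of $\{y_k\}_{k\in[m]}$ is $a^{(1)}_i+n$, and the hypothesis $w(k+(l-1)m)<w(k)+n$ supplies the coordinate-wise inequality. Distinctness of all the values involved is automatic because $w:\mbZ\to\mbZ$ is a bijection. The proof is essentially just the sorting lemma plus routine bookkeeping; I anticipate no real obstacle beyond tracking the indices correctly.
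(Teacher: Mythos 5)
Your proof is correct and is essentially the same argument the paper gives: both reduce to the observation that if two length-$m$ sequences satisfy a coordinate-wise strict inequality, then so do their sorted versions, proved by the same counting of how many elements can lie below the $i$-th order statistic. The only cosmetic difference is that the paper handles the wraparound inequality $wu(i+m(l-1))<wu(i)+n$ in the same loop as the others by using the periodicity $w(i+n)=w(i)+n$ with $j=l-1$, whereas you split it out as a separate conclusion (b); this changes nothing substantive.
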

\begin{proof}
	Fix any $j\in[0,l-1]$, and denote $a_i=w(i+jm)$, and $b_i=w(i+(j+1)m)$ for $i\in[m]$. Let $a_{\sigma(1)}<a_{\sigma(2)}<\ldots<a_{\sigma(m)}$ and $b_{\eta(1)}<b_{\eta(2)}<\ldots<b_{\eta(m)}$ for some $\sigma,\eta\in S_m$. It suffices to show that $a_{\sigma(i)}<b_{\eta(i)}$ for all $i\in[m]$ and this reduces to showing there are at most $i-1$ elements in $\{b_1,\ldots,b_m\}$ that are smaller than $a_{\sigma(i)}$. This is true because we know $b_{\sigma(i)},b_{\sigma(i+1)},\ldots,b_{\sigma(m)}$ are larger than $a_{\sigma(i)}$.
\end{proof}
\section{Structure of two-sided Kazhdan-Lusztig cells of rectangular type}
In this section we restrict ourselves to $n\ge3$ and study the structure of left (resp. right) Kazhdan-Lusztig cells and left (resp. right) Knuth classes inside the two sided cell $C_{\lambda}$ when $\lambda=(l^m)$ ($n=lm$) is a rectangle. The case of $n=2$ is addressed separately in Section 6.
\begin{Lemma}\label{equidist}
	For any partition $\lambda$, let $d_\lambda=\gcd(\lambda_1^{\Tr},\lambda_2^{\Tr},\ldots)$. Then
	\begin{enumerate}
		\item $\charge(\phi(X))\equiv \charge(X)-1\;(\MD d_\lambda)$ for any $X\in T(\lambda)$.
		\item $\charge(X)\,(\MD d_\lambda)$ for $X\in T(\lambda)$ is equi-distrubuted, i.e.
		$$\#\{X\in T(\lambda)\mid \charge(X)\equiv r\;(\MD d_\lambda)\}$$ is independent of $r\in\left[\overline{d_\lambda}\right]$.
	\end{enumerate} 
\end{Lemma}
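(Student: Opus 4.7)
My plan is to first understand how the rotation $\phi: X \mapsto X+1$ transforms each row of a tabloid, then read off the change in each local charge, and finally sum to obtain the change in total charge modulo $d_\lambda$. The key dichotomy is whether $\overline{n}$ appears in row $i$ or not: if $\overline{n} \notin X_i$, then every entry of row $i$ lies in $[1, n-1]$ and shifting by $1$ preserves the sorted order, giving $(X+1)_{i,j} = X_{i,j} + 1$ for every $j \in \mbZ$; if $\overline{n} \in X_i$, then necessarily $X_{i,\lambda_i} = n$, and after the shift $\overline{n}$ becomes $\overline{1}$, which drops to the front of the row, so $(X+1)_{i,j} = X_{i,j-1} + 1$ for every $j \in \mbZ$ under the periodicity convention.

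I would then substitute these two formulas into the defining inequality $X_{i+1, j+c} \geq X_{i,j}$ for $\lch_i$, reindexing $j \mapsto j-1$ whenever $\overline{n} \in X_i$. A routine four-case analysis according to whether $\overline{n}$ lies in row $i$ and in row $i+1$ should yield, whenever $\lambda_i = \lambda_{i+1}$,
$$\lch_i(X+1) - \lch_i(X) = \mathbb{1}_{\overline{n} \in X_{i+1}} - \mathbb{1}_{\overline{n} \in X_i},$$
while both sides vanish trivially when $\lambda_i > \lambda_{i+1}$. Letting $k$ denote the unique row containing $\overline{n}$ and summing,
$$\charge(X+1) - \charge(X) = (k-1)\,\mathbb{1}_{\lambda_{k-1} = \lambda_k} - k\,\mathbb{1}_{\lambda_k = \lambda_{k+1}}.$$

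To close part (1), I would record $\lambda$ in block form $(a_1^{b_1}, \ldots, a_r^{b_r})$ with $a_1 > \ldots > a_r$, so that $d_\lambda = \gcd(b_1, \ldots, b_r)$, and run a short case check on where $k$ sits in its block. An interior position gives exactly $-1$; the first or last row of a nontrivial block gives $-k$ or $k-1$, each $\equiv -1 \pmod{d_\lambda}$ because the partial sums $b_1 + \ldots + b_s$ are all multiples of $d_\lambda$; and if $k$ is the sole row of a singleton block then $b_s = 1$ forces $d_\lambda = 1$, making the congruence vacuous. Part (2) is then immediate: $\phi$ is a bijection of $T(\lambda)$ and by (1) it restricts to a bijection from $\{X : \charge(X) \equiv r \pmod{d_\lambda}\}$ onto $\{X : \charge(X) \equiv r-1 \pmod{d_\lambda}\}$, so all residue classes have the same cardinality.

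The main obstacle is the four-case analysis producing the local charge difference formula; once that is in hand the block-decomposition bookkeeping for (1) and the bijection argument for (2) are routine.
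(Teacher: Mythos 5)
Your proof is correct and follows essentially the same strategy as the paper: track how the rotation $X\mapsto X+1$ permutes entries within each row, deduce the change in each $\lch_i$, and reduce to the fact that $d_\lambda$ divides every partial sum $b_1+\ldots+b_s$ of block sizes. You are more careful than the paper in one respect: the paper asserts that $\charge(\phi(X))=\charge(X)-1$ exactly whenever $\overline{n}$ is not at the end of a block, whereas when $\overline{n}$ is the first row of a block $s\ge 2$ the exact change is $-(b_1+\ldots+b_{s-1}+1)$, which only agrees with $-1$ modulo $d_\lambda$; your explicit formula $\charge(X+1)-\charge(X)=(k-1)\mathbb{1}_{\lambda_{k-1}=\lambda_k}-k\mathbb{1}_{\lambda_k=\lambda_{k+1}}$ handles all cases cleanly.
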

\begin{proof}
	It suffices to prove the first result. Let $\lambda=(a_1^{b_1},a_2^{b_2},\ldots)$ where $a_1>a_2>\ldots$. We assume all $b_i>1$, otherwise $d_\lambda=1$ and the claim is trivial. Suppose the row in $X$ containing $\overline{n}$ has length $a_i$. If $\overline{n}$ does not lie in row $b_1+\ldots+b_j$, then $\charge(\phi(X))= \charge(X)-1$; otherwise $\charge(\phi(X))= \charge(X)-1+b_1+\ldots+b_j$. Hence the claim follows from $d_\lambda\mid b_1+\ldots+b_j$.
\end{proof}

For the rest of the section, we restrict ourselves to $\lambda=(l^m)$. From Theorem \ref{knuthQ} and Theorem \ref{knuthrho}, we know there is a bijection:
$$\LKC_{w_0^\lambda}\rightarrow\left\{\left.X\in T(\lambda)\mid m \right|\charge(X)\right\}$$
by $w\mapsto P(w)$. Hence from Lemma \ref{equidist}, we know $\#\LKC_{w_0^\lambda}=\#T(\lambda)/m=\frac{n!}{m(l!)^m}.$ In order to understand the Knuth class containing $w_0^\lambda$ better, we need to construct explicitly the inverse of the above bijection.

We define the following set of affine permutations and study its combinatorial properties:
\begin{Definition}\label{def:fundbox}
Define the \textit{fundamental box} $\mfF$ to be the set of affine permutations $w$ satisfying the following three sets of conditions:
    \begin{enumerate}
	\item $w^{-1}(i)<w^{-1}(i+m)<\ldots<w^{-1}(i+m(l-1))<w^{-1}(i)+n$ for $i\in[m]$;
	\item $w^{-1}(1+(j-1)m)<w^{-1}(2+(j-1)m)<\ldots<w^{-1}(jm)$ for $j\in[l]$;
	\item $\lch_i(P(w^{-1}w_0^\lambda))=\Diff_i(w^{-1})$ for all $i\in[m-1]$ where \begin{equation}\label{eq-diff}
	\Diff_i(w^{-1})=\sum_{j=0}^{l-1}\left(\left\lceil{\frac{w^{-1}(i+1+jm)}{n}}\right\rceil-\left\lceil{\frac{w^{-1}(i+jm)}{n}}\right\rceil\right).
	\end{equation}
\end{enumerate}
\end{Definition}
The name fundamental box comes from the equivalent definition using the notion of alcoves, which we explain in Section 5. We can strengthen the second condition in Definition \ref{def:fundbox} to the following:

\begin{Lemma}\label{lemn-l}
	Given $w\in\mfF^{-1}$, then we have  $$w(i+1+jm)-w(i+jm)\le n-l$$ 
	for $i\in[m-1],j\in[0,l-1].$
\end{Lemma}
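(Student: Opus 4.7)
The plan is to prove Lemma~\ref{lemn-l} by contradiction: suppose $w(i+1+jm) - w(i+jm) \ge n - l + 1$ for some $i \in [m-1]$ and $j \in [0, l-1]$. First I would introduce convenient coordinates by writing $w(k) = \sigma(k) + n\,e_k$ with $\sigma(k) \in [1, n]$ and $e_k \in \mbZ$. Affineness of $w$ then becomes $\sum_{k=1}^n e_k = 0$, while condition (1) of Definition~\ref{def:fundbox} forces the $e$-values within each ``column'' $i'' \in [m]$ (at positions $i'', i''+m, \ldots, i''+(l-1)m$) to be weakly increasing with total range at most $1$. Each such column is thus parameterized by a base level $e^{(0)}_{i''}$ and a threshold $\tau_{i''} \in [1, l]$ locating the single allowed jump; the column $e$-sum is $E_{i''} := \sum_{j'=0}^{l-1} e_{i''+j'm} = l\,e^{(0)}_{i''} + (l - \tau_{i''})$.

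Next I would show that $\Diff_i(w) = E_{i+1} - E_i$, using $\lceil w(k)/n \rceil = e_k + 1$, so that condition (3) of Definition~\ref{def:fundbox} reads
\[
\lch_i\bigl(P(ww_0^\lambda)\bigr) \;=\; E_{i+1} - E_i.
\]
Since local charges on rectangular tabloids are nonnegative, and in our setting $P(ww_0^\lambda)$ must have shape $(l^m)$ with interleaved rows forcing $\lch \le l-1$, we get $0 \le E_{i+1} - E_i \le l - 1$. Translated back to $(e^{(0)}, \tau)$, only two regimes are allowed: either $e^{(0)}_{i+1} = e^{(0)}_i$ with $0 \le \tau_i - \tau_{i+1} \le l-1$, or $e^{(0)}_{i+1} = e^{(0)}_i + 1$ with $\tau_{i+1} - \tau_i \ge 1$.

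For the last step, I would translate the contradiction hypothesis into these coordinates by writing $b - a = \Delta\sigma + n\,\Delta e$ with $\Delta\sigma := \sigma(i+1+jm) - \sigma(i+jm) \in [1-n, n-1]$ and $\Delta e := e_{i+1+jm} - e_{i+jm}$; the bound $b - a \ge n-l+1$ forces $\Delta e \ge 0$, and the column structure restricts $\Delta e \in \{0, 1\}$. A case analysis on $\Delta e$ together with how the row index $j+1$ compares to the thresholds $\tau_i, \tau_{i+1}$ pins down $e^{(0)}_{i+1} - e^{(0)}_i$ and $\tau_i - \tau_{i+1}$; in each subcase the contradiction hypothesis forces $E_{i+1} - E_i$ outside the admissible interval $[0, l-1]$, yielding the contradiction. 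The main obstacle will be the bookkeeping in this case analysis: there are four sub-configurations to handle depending on whether each of $e_{i+jm}, e_{i+1+jm}$ lies below or above its column's threshold, and in the subcase $\Delta e = 0$ one must additionally use the disjointness of the $\sigma$-residue sets of consecutive columns to close the argument.
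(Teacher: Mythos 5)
Your plan hinges on the claim that $\lch_i(P(ww_0^\lambda))\leq l-1$, hence $E_{i+1}-E_i\in[0,l-1]$, and this claim is false. The correct a priori bound for a rectangular tabloid of shape $(l^m)$ is $0\leq\lch_i\leq l$, and the extreme value $\lch_i=l$ is actually attained by elements of $\mfF^{-1}$. Take $n=4$, $m=2$, $l=2$, and $w=[0,2,3,5]$. One checks $w\in\mfF^{-1}$: condition (1) reads $0<3<4$ and $2<5<6$; condition (2) reads $0<2$ and $3<5$; and $\Diff_1(w)=(\lceil 2/4\rceil-\lceil 0/4\rceil)+(\lceil 5/4\rceil-\lceil 3/4\rceil)=1+1=2$, while $P(ww_0^\lambda)$ has rows $\{\overline{3},\overline{4}\}$ and $\{\overline{1},\overline{2}\}$, so $\lch_1(P(ww_0^\lambda))=2=\Diff_1(w)$ and condition (3) holds. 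Here $\lch_1=l$, equivalently $E_2-E_1=e_2+e_4-(e_1+e_3)=(0+1)-(-1+0)=2=l$, which lies outside your ``admissible interval.'' The lemma is still true for this $w$ (both differences equal $n-l=2$), so your case analysis, which would declare this configuration impossible, cannot be correct as stated. Repairing the interval to $[0,l]$ introduces an extra regime ($e^{(0)}_{i+1}=e^{(0)}_i+1$, $\tau_{i+1}=\tau_i$) in which the coarse $(e^{(0)},\tau)$ data no longer forces $\Delta\sigma\leq -l$, so more $\sigma$-level information would be needed even then, and you have not indicated what it would be.

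For comparison, the paper's proof is a direct argument on the tabloid entries rather than on the $(\sigma,e)$ parameterization. Setting $a_x=P_{i,x}(ww_0^\lambda)$, $b_x=P_{i+1,x}(ww_0^\lambda)$, it uses that condition (3) forces the charge matching at row $i$ to send $w(i+jm)=a_r$ to $w(i+1+jm)=b_{r+\gamma}$ with $\gamma=\lch_i$. Assuming $b_{r+\gamma}-a_r\geq n-l+1$, a pigeonhole count on the $l-1$ integers in $[a_r+n-l+1,a_r+n-1]$ yields a matching $a_x\mapsto b_{x+\gamma-1}$, contradicting minimality of $\lch_i$. This sidesteps the $(e^{(0)},\tau)$ bookkeeping entirely and never needs a bound like $\lch_i\leq l-1$.
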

\begin{proof}
	Suppose on the contrary that $w(i+1+jm)-w(i+jm)\ge n-l+1$ for some $i\in[m-1],j\in[0,l-1].$ Let $a_x=P_{i,x}(ww_0^\lambda), b_x=P_{i+1,x}(ww_0^\lambda)$ for $x\in\mbZ$. Then $w(i+jm)=a_r$ and $w(i+1+jm)=b_{r+\gamma}$ for some $r\in\mbZ$ and $\gamma=\lch_i(P(ww_0^\lambda))$.
	
	Since $b_{r+\gamma}-a_r\ge n-l+1$, we have $a_r\le b_{r+\gamma}-n+l-1=b_{r+\gamma-l}+l-1 \le b_{r+\gamma-1}$. Let $k$ be the largest index with $a_k< b_{r+\gamma}$. Therefore $$a_r+n-l+2\le a_{k+1}<\ldots< a_{r-1+l}\le a_r+n-1.$$
	
	Claim: $b_{k+\gamma}\ge a_r+n-1$. If $b_{k+\gamma}\le a_r+n-2$, then $$a_r+n-l+1\le b_{r+\gamma}<\ldots<b_{k+\gamma}\le a_r+n-2.$$
	So we get $l$ different integers $b_{r+\gamma},\ldots,b_{k+\gamma},a_{k+1},\ldots, a_{r-1+l}$ in a size $l-1$ interval $[a_r+n-l+1,a_r+n-1]$, which is a contradiction, hence the claim is proved.
	
	Now we have a new matching $a_x<b_{x+\gamma-1}$ for all $x\in \mbZ$, contrary to the definition of local charge and property (3).
\end{proof}
\begin{Proposition}
	\label{Abij}
	The map $$\mfF^{-1} \rightarrow\left\{\left.X\in T(\lambda)\mid m \right|\charge(X)\right\}$$ via $w\mapsto P(ww_0^\lambda)$ is a bijection.
\end{Proposition}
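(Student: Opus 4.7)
The plan is to construct the inverse map explicitly, decoding the three conditions of Definition \ref{def:fundbox} one at a time. First, for $v\in\mfF^{-1}$, I observe that $w:=vw_0^\lambda$ satisfies the hypotheses of Lemma \ref{lemAMBC}: conditions (1) and (2) of $v\in\mfF^{-1}$ become, after the block-reversal induced by $w_0^\lambda$, the column-increasing and row-decreasing hypotheses of Lemma \ref{lemAMBC}. Hence $Q(w)=T^\lambda$ and the rows of $X:=P(vw_0^\lambda)$ are
\[X_i=\{\overline{v(i)},\overline{v(i+m)},\ldots,\overline{v(i+(l-1)m)}\}.\]

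Condition (1) additionally forces $v(i),v(i+m),\ldots,v(i+(l-1)m)$ to be $l$ consecutive entries of the cyclically extended row $X_{i,\cdot}$; so there is a unique integer $p_i\in\mbZ$ such that $v(i+(j-1)m)=X_{i,\,j+p_i}$ for $j\in[l]$, and the sum condition $\sum_k v(k)=n(n+1)/2$ becomes $\sum_{i=1}^m p_i=0$. I next compute $\Diff_i(v)$ in this parametrization: using $\lceil X_{i,r}/n\rceil=\lceil r/l\rceil$ (since $X_{i,r}\in((\lceil r/l\rceil-1)n,\,\lceil r/l\rceil\,n]$) together with the arithmetic identity $\sum_{k=1}^l\lceil(k+p)/l\rceil=l+p$, I obtain $\Diff_i(v)=p_{i+1}-p_i$. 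Condition (3) thus reads $\lch_i(X)=p_{i+1}-p_i$, giving $p_i=p_1+s_i(X)$; imposing $\sum_i p_i=0$ yields $mp_1=-\sum_i s_i(X)$. Since $\sum_i s_i(X)=m\sum_j\lch_j(X)-\charge(X)\equiv-\charge(X)\pmod m$, an integer $p_1$ exists if and only if $m\mid\charge(X)$, in which case the whole vector $p$ is uniquely determined.

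Finally, condition (2) of $\mfF^{-1}$ is automatic from condition (3): the required inequality $X_{i,\,j+p_i}<X_{i+1,\,j+p_{i+1}}$ becomes, via $k=j+p_i$, the inequality $X_{i,k}<X_{i+1,\,k+\lch_i(X)}$, which is the defining property of $\lch_i(X)$, strict because rows $i$ and $i+1$ of a tabloid are disjoint subsets of $[\overline{n}]$. Consequently, for every $X$ with $m\mid\charge(X)$ there is a unique $v\in\mfF^{-1}$ with $P(vw_0^\lambda)=X$, and the map is a bijection.

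The main obstacle I anticipate is the computation $\Diff_i(v)=p_{i+1}-p_i$, which requires careful bookkeeping of ceiling functions under the cyclic extension $X_{i,j+l}=X_{i,j}+n$. Once this identity is in hand, the rest reduces to the linear system for $p$ and the elementary congruence identifying $\sum_i s_i(X)\bmod m$ with $-\charge(X)\bmod m$.
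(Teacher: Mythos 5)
Your proof is correct and follows essentially the same strategy as the paper's, namely constructing the explicit inverse map: your parametrization $v(i+(j-1)m)=X_{i,\,j+p_i}$ with $p_i=p_1+s_i(X)$ and $p_1$ pinned down by $\sum_i p_i=0$ is exactly the paper's formula $w(i+jm)=P_{i,\,j+1-e+\sum_{\alpha<i}\lch_\alpha(P)}$ under the identification $p_1=-e$. You carry out the verification the paper leaves implicit (``It can be checked explicitly that $w\in\mfF^{-1}$''), including the observation that condition (2) of Definition \ref{def:fundbox} follows automatically from $p_{i+1}-p_i=\lch_i(X)$ and the defining property of the local charge.
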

\begin{proof}
	For any $w\in \mfF^{-1}$, Lemma \ref{lemAMBC} tells $P(ww_0^\lambda)_i=\left\{\overline{w(i)},\overline{w(i+m)},\ldots,\overline{w(i+m(l-1))}\right\}$ for $i\in[m]$. By the definition of $\Diff_i$ and $\sum\limits_{i=1}^n w(i)=\frac{n(n+1)}{2}$ we know $m$ divides $$\Diff_1(w)+2\Diff_2(w)+\ldots+(m-1)\Diff_{m-1}(w).$$
	Therefore $m\mid \charge(P(ww_0^\lambda))$ and the map is well-defined.
	
	We construct its inverse as follows. Pick a tabloid $P\in T(\lambda)$ with $m\mid\charge(P)$, and let $$e=\sum_{i=1}^{m-1}\lch_i(P)-\frac{\charge(P)}{m}.$$
	Define $w$ to be $$w(i+jm)=P_{i,j+1-e+\sum_{\alpha=1}^{i-1}\lch_\alpha(P)}$$
	for $i\in [m],j\in[0,l-1]$. It can be checked explicitly that $w\in \mfF^{-1}$ and the procedure gives the inverse of the map in the statement.
\end{proof}
\begin{Theorem}\label{thmALKC}
$$\mfF^{-1}{\cdot}w_0^\lambda=\LKC_{w_0^\lambda},$$
or equivalently,
$$\mfF=w_0^\lambda{\cdot}\RKC_{w_0^\lambda}.$$
\end{Theorem}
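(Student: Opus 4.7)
The plan is to prove the stated equality by combining a cardinality count with the inclusion $\mfF^{-1}\cdot w_0^\lambda\subseteq\LKC_{w_0^\lambda}$. For cardinality, Proposition \ref{Abij} and Lemma \ref{equidist}(2) give $|\mfF^{-1}|=\#\{X\in T(\lambda):m\mid\charge(X)\}=\tfrac{n!}{m(l!)^m}$. For $\LKC_{w_0^\lambda}$, I use that $w_0^\lambda$ is an involution together with Proposition \ref{propinvAMBC} to identify $\LKC_{w_0^\lambda}$ bijectively with $\RKC_{w_0^\lambda}$ via $x\mapsto x^{-1}$; then the rectangular case of Theorem \ref{knuthrho} (trivial $G^R_{w_0^\lambda}$ since $\lambda^{\Tr}=(m^l)$ has only one distinct part size) makes $w\mapsto Q(w)$ injective on $\RKC_{w_0^\lambda}$, with image the Knuth class of $Q(w_0^\lambda)=T^\lambda$ by Theorem \ref{thmknuth}(1) and Theorem \ref{knuthQ}, giving $|\LKC_{w_0^\lambda}|=\tfrac{n!}{m(l!)^m}$ as well.

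For the inclusion, take $v\in\mfF^{-1}$ and set $u=vw_0^\lambda$. The block-reversing form of $w_0^\lambda$ converts conditions (1) and (2) of Definition \ref{def:fundbox} into the hypotheses of Lemma \ref{lemAMBC} for $u$, so $Q(u)=T^\lambda$ together with the explicit formulas for $P(u)$ and $\rho(u)$ hold. A direct substitution yields $\rho_{i+1}(u)-\rho_i(u)=\Diff_i(v)$, so condition (3) of Definition \ref{def:fundbox} is equivalent to the constancy of the vector $\rho(u)-s(P(u))$; the divisibility argument in the proof of Proposition \ref{Abij} also gives $m\mid\charge(P(u))$. I then claim the characterization
\[
\LKC_{w_0^\lambda}=\{u\in L_{T^\lambda}\,:\,m\mid\charge(P(u)),\ \rho(u)-s(P(u))\text{ is a constant vector}\}.
\]
Since constancy together with $\sum_i\rho_i=0$ pins down $\rho$ uniquely from $P$, the right-hand set has one element per $P$ in the Knuth class of $T^\lambda$, matching $|\LKC_{w_0^\lambda}|$; it therefore suffices to show $\LKC_{w_0^\lambda}$ is contained in the right-hand set. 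This follows by induction on left Knuth moves starting from $w_0^\lambda$ (which trivially satisfies both conditions with $\rho=s(P)=0$) using Theorem \ref{thmknuth}: in a move with $\bar i\ne\bar n$, both $\rho$ and every $\lch_j(P)$ are unchanged; in a move with $\bar i=\bar n$, the shift $\rho(s_kw)-\rho(w)=e_k-e_{k'}$ is absorbed into a uniform shift of $\rho-s(P)$.

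The main obstacle will be the invariance of the constancy property in the case $\bar i=\bar n$: one must carefully track how swapping $\bar n$ and $\bar 1$ between rows $k$ and $k'$ alters each local charge $\lch_j(P)$, and verify that the resulting change in $s(P)$ matches $\Delta\rho=e_k-e_{k'}$ up to a uniform constant vector. This reduces to a direct combinatorial computation using the matching description of $\lch_j$ from Definition \ref{deflch}, analogous to the arguments underlying Theorem \ref{knuthrho}.
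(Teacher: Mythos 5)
Your overall strategy—match cardinalities, then establish a single inclusion—mirrors the paper's, and your reformulation of condition (3) of Definition \ref{def:fundbox} as constancy of $\rho(u)-s(P(u))$ (via the identity $\rho_{i+1}(u)-\rho_i(u)=\Diff_i(w^{-1})$ and $\lch_i(P(u))=s_{i+1}(P(u))-s_i(P(u))$) is a correct and clean repackaging that the paper only makes explicit later, in Corollary \ref{corconst}. You also avoid the paper's Cases 2.2.1--2.2.2 (which rule out certain adjacent-row configurations): in your framing, conditions (1), (2) of $\mfF$ are automatic because $Q(u)=T^\lambda$ is preserved by left Knuth moves and Proposition \ref{propleftcell} characterizes $L_{T^\lambda}$ by exactly those inequalities. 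So the packaging is fine.

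The genuine gap is the induction step, which you explicitly flag as "the main obstacle" and do not carry out. Two specific items are missing. First, the assertion that every $\lch_j(P)$ is unchanged in a Knuth move at $\overline{i}\neq\overline{n}$ is \emph{not} a consequence of "swapping consecutive entries" alone: when $\overline{i}$ and $\overline{i+1}$ sit in \emph{adjacent} rows $r, r+1$, the swap can in principle change $\lch_r$, and ruling this out requires invoking the incomparability condition on $\tau(P)$ and $\tau(P')$ that defines a Knuth move (a subtlety you do not address). Second, for $\overline{i}=\overline{n}$, the required identity $s(P')-s(P)=e_k-e_{k'}$ (with $\overline{n}$ in row $k$, $\overline{1}$ in row $k'$) is precisely the content of the explicit local-charge change formulas the paper records in Case 1 (the $k=0$ subcase) of its proof, e.g.\ $\lch_{i-1}\mapsto\lch_{i-1}+1$, $\lch_i\mapsto\lch_i-1$, $\lch_{i'-1}\mapsto\lch_{i'-1}-1$, $\lch_{i'}\mapsto\lch_{i'}+1$, and the degenerate adjacent version in Case 2.1. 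These verifications are the combinatorial heart of Theorem \ref{thmALKC}; referring to them as "a direct combinatorial computation analogous to Theorem \ref{knuthrho}" does not replace doing them, especially since the adjacent-row case needs the Knuth-move hypothesis in an essential way. (A minor redundancy in your characterization: given $u\in L_{T^\lambda}$ with $\rho(u)-s(P(u))$ constant, the condition $m\mid\charge(P(u))$ already follows from $\sum_i\rho_i(u)=0$ and $\sum_i s_i(P)=m\sum_j\lch_j(P)-\charge(P)$.)
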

\begin{proof}
	From Proposition \ref{Abij}, both sets $\mfF^{-1}{\cdot}w_0^\lambda$ and $\LKC_{w_0^\lambda}$ have the same cardinality so we will only show $\LKC_{w_0^\lambda}\subset \mfF^{-1}{\cdot}w_0^\lambda$. Since $w_0^\lambda\in \mfF^{-1}{\cdot}w_0^\lambda\cap \LKC_{w_0^\lambda}$, it suffices to show that for any $w\in\mfF^{-1}\cap \LKC_{w_0^\lambda}{\cdot}w_0^\lambda$, and $s_kww_0^\lambda\sim_{\LKC}  ww_0^\lambda$, there is $s_kw\in\mfF^{-1}$. Suppose $\overline{k}$ lies in row $i$ of $P(ww_0^\lambda)$ and $\overline{k+1}$ lies in row $i'$. 
	
	Case 1: $i\ne i'\pm 1.$ Then monotone conditions (1) (2) (in Definition \ref{def:fundbox}) of $\mfF^{-1}$ are be preserved when multiplying $s_k$ on the left. Then $P(ww_0^\lambda)$ and $P(s_kww_0^\lambda)$ can be computed directly from Lemma \ref{lemAMBC} and we have $P(s_kww_0^\lambda)=s_kP(ww_0^\lambda)$. If $k\neq 0$, both $\Diff_i$ and $\lch_i$ will not change so (3) is satisfied as well. In case $k=0$, 
	$$\sum_{j=0}^{l-1}\left\lceil{\frac{s_kw(i+jm)}{n}}\right\rceil=\sum_{j=0}^{l-1}\left\lceil{\frac{w(i+jm)}{n}}\right\rceil+1,$$
	$$\sum_{j=0}^{l-1}\left\lceil{\frac{s_kw(i'+jm)}{n}}\right\rceil=\sum_{j=0}^{l-1}\left\lceil{\frac{w(i'+jm)}{n}}\right\rceil-1.$$
	Also we have
	\begin{align*}
	\lch_{i-1}(P(s_kww_0^\lambda)) & =\lch_{i-1}(P(ww_0^\lambda))+1,\\
	\lch_{i}(P(s_kww_0^\lambda)) & =\lch_{i}(P(ww_0^\lambda))-1,\\
	\lch_{i'-1}(P(s_kww_0^\lambda)) & =\lch_{i'-1}(P(ww_0^\lambda))-1,\\
	\lch_{i'}(P(s_kww_0^\lambda)) & =\lch_{i'}(P(ww_0^\lambda))+1.
	\end{align*} 
	Hence $\Diff_j(s_kw)=\lch_j(P(s_kww_0^\lambda))$.
	
	Case 2: $i=i'\pm 1.$ Let $w(i+am)\equiv k\;(\MD n)$ and $w(i'+bm)\equiv k+1\;(\MD n)$.
	
	Case 2.1: $a\ne b.$ Clearly $s_kw$ still satisfies monotone conditions (1) (2) and $P(s_kww_0^\lambda)=s_kP(ww_0^\lambda)$. When $k\ne 0$, $\Diff$ and $\lch$ do not change so (3) is satisfied as well. So we only need to consider $k=0$. If in addition $i'=i+1$, 
	\begin{align*}
	\lch_{i-1}(P(s_kww_0^\lambda)) & =\lch_{i-1}(P(ww_0^\lambda))+1,\\
	\lch_{i}(P(s_kww_0^\lambda)) & =\lch_{i}(P(ww_0^\lambda))-2,\\
	\lch_{i+1}(P(s_kww_0^\lambda)) & =\lch_{i+1}(P(ww_0^\lambda))+1,
	\end{align*} 
	so (3) is again satisfied. $i'=i-1$ is similar.
	
	Case 2.2: $a=b$. $s_kw$ satisfies (1) trivially.
	
	Case 2.2.1: $i'=i+1.$ By Lemma \ref{lemn-l}, $0<w(i+1+am)-w(i+am)\le n-l$, so we have $w(i+1+am)-w(i+am)=1$, and $ww_0^\lambda((a+1)m-i)-ww_0^\lambda((a+1)m+1-i)=1$. Then by definition of Knuth move, $s_kww_0^\lambda\not\sim_{\LKC} ww_0^\lambda$, so this case cannot happen.
	
	Case 2.2.2: $i=i'+1$. Again by Lemma \ref{lemn-l}, there is $0<w(i+1+am)-w(i+am)\le n-l$, hence we know $l=1$, $a=0$ and $w(i+1)-w(i)=n-1$. Suppose $ww_0^\lambda(n-i)=k+\alpha n$ for some $\alpha\in\mbZ$, then $ww_0^\lambda(n-i+1)=k+1+(\alpha-1) n$, and $ww_0^\lambda(2n-i+1)=k+1+\alpha n$. Because $ww_0^\lambda(1)>ww_0^\lambda(2)>\ldots>ww_0^\lambda(n)$, and $((ww_0^\lambda)^{-1}(k-1+\alpha n),k-1+\alpha n)$ or $((ww_0^\lambda)^{-1}(k+2+\alpha n),k+2+\alpha n)$ is $(n,n)$-translate of one of $\{(i,ww_0^\lambda(i))\}_{i=1}^{n}$, hence cannot lie between row $n-i$ and $2n-i+1$, which contradicts $s_kww_0^\lambda\sim_{\LKC} ww_0^\lambda$.
\end{proof}

\begin{Corollary}\label{corconst}
	Let $w\in\mfF^{-1}$. Then $P(ww_0^\lambda)=\overline{w}(T^\lambda), Q(ww_0^\lambda)=T^\lambda$ and $\tilde{\rho}(ww_0^\lambda)$ is a constant vector with value $$-\sum_{i=1}^{m-1}\lch_i(P(ww_0^\lambda))+\frac{\charge(P(ww_0^\lambda))}{m}=\sum_{j=0}^{l-1}\left\lceil\frac{w(1+jm)}{n}\right\rceil-l.$$
\end{Corollary}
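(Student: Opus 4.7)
The plan is to recognize that the map $v \mapsto v w_0^\lambda$ converts the fundamental box conditions into precisely the hypotheses of Lemma \ref{lemAMBC}, after which everything becomes a bookkeeping exercise.

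First I would verify that $v := ww_0^\lambda$ satisfies both monotonicity hypotheses of Lemma \ref{lemAMBC}. Using the explicit formula $w_0^\lambda(jm+1-i) = (j-1)m+i$ for $i \in [m], j \in [l]$, condition (1) of $\mfF^{-1}$ on $w$ translates immediately into condition (1) of the lemma for $v$, and condition (2) of $\mfF^{-1}$ (each block $w(1+(j-1)m), \ldots, w(jm)$ is increasing) translates, via the reversal performed by $w_0^\lambda$ inside each block, into condition (2) of the lemma (each block of $v$ is decreasing). Lemma \ref{lemAMBC} then gives $Q(v) = T^\lambda$ immediately, and reading off $P_i(v) = \{\overline{v(m+1-i)}, \overline{v(2m+1-i)}, \ldots, \overline{v(n+1-i)}\}$ together with the substitution above yields $P_i(v) = \{\overline{w(i)}, \overline{w(i+m)}, \ldots, \overline{w(i+(l-1)m)}\}$, which is exactly $\overline{w}(T^\lambda_i)$ because row $i$ of $T^\lambda$ holds the integers congruent to $i$ modulo $m$ in $[n]$.

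Next I would compute the weight. The same lemma gives $\rho_i(v) = \sum_{j=0}^{l-1} \lceil w(i+jm)/n \rceil - l$. Since all rows of $T^\lambda$ have the same length $l$ and are ``evenly shifted'' by $1$ down relative to a shift of $m$ across, a direct check of Definition \ref{deflch} shows $\lch_j(T^\lambda) = 0$ for $j \in [m-1]$, hence $s(Q(v)) = 0$ and $\widetilde{\rho}(v) = \rho(v) - s(P(v))$. Condition (3) of $\mfF^{-1}$ replaces $\lch_j(P(v))$ by $\Diff_j(w)$, and the sum $s_i(P(v)) = \sum_{j=1}^{i-1} \Diff_j(w)$ telescopes (this is where the definition of $\Diff_j$ as a difference of ceilings is crucial), collapsing
\[
\widetilde{\rho}_i(v) = \sum_{j=0}^{l-1}\left\lceil\frac{w(i+jm)}{n}\right\rceil - l - \sum_{j=0}^{l-1}\left(\left\lceil\frac{w(i+jm)}{n}\right\rceil - \left\lceil\frac{w(1+jm)}{n}\right\rceil\right) = \sum_{j=0}^{l-1}\left\lceil\frac{w(1+jm)}{n}\right\rceil - l,
\]
independent of $i$, which is the second claimed value.

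Finally, for the equality with $-\sum_i \lch_i(P(v)) + \charge(P(v))/m$, I would average $\widetilde{\rho}_i(v)$ over $i \in [m]$. The sum $\sum_i s_i(P(v)) = m \sum_j \lch_j(P(v)) - \charge(P(v))$ falls out of Abel summation, and $\sum_i \rho_i(v) = \sum_{k=1}^n \lceil w(k)/n \rceil - n = 0$ follows from writing $w(k) = \overline{w(k)} + n\gamma_k$ with $\overline{w(k)} \in [n]$ and using $\sum_{k=1}^n w(k) = n(n+1)/2$ to force $\sum_k \gamma_k = 0$, so $\sum_{k=1}^n \lceil w(k)/n \rceil = n = ml$. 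The only genuinely subtle point is the telescoping step, which is why condition (3) of $\mfF^{-1}$ is phrased as an equality $\lch_i(P(ww_0^\lambda)) = \Diff_i(w)$ rather than as an inequality; everything else is a direct application of Lemma \ref{lemAMBC} and bookkeeping with $T^\lambda$.
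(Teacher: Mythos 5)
Your proof is correct and takes a route that is cleaner than the paper's at a couple of points, so the comparison is worth spelling out. Both arguments hinge on Lemma~\ref{lemAMBC}: you verify its two monotonicity hypotheses directly from conditions (1) and (2) of Definition~\ref{def:fundbox} via the explicit formula for $w_0^\lambda$, and read off $P(ww_0^\lambda)=\overline{w}(T^\lambda)$, $Q(ww_0^\lambda)=T^\lambda$, and $\rho_i(ww_0^\lambda)=\sum_{j=0}^{l-1}\lceil w(i+jm)/n\rceil-l$. Where you diverge is in handling $\widetilde{\rho}$: the paper invokes Theorem~\ref{thmALKC} to place $ww_0^\lambda$ in $\LKC_{w_0^\lambda}$ and then substitutes the explicit inverse formula from Proposition~\ref{Abij} (which writes $w(i+jm)$ in terms of $P_{i,\bullet}$ and the offset $e$) to collapse $\widetilde{\rho}_i$ to $-e$ directly. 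You instead use condition (3) of the fundamental box to write $s_i(P(ww_0^\lambda))=\sum_{j'<i}\Diff_{j'}(w)$, which telescopes against $\rho_i(ww_0^\lambda)$ to give $\sum_j\lceil w(1+jm)/n\rceil-l$ independent of $i$; you then recover the charge/local-charge form by averaging over $i$ and using $\sum_i\rho_i=0$ together with $\sum_i s_i(P)=m\sum_j\lch_j(P)-\charge(P)$. Both routes are essentially the same underlying computation, but yours is more self-contained: it needs only the definition of $\mfF$ and Lemma~\ref{lemAMBC}, whereas the paper leans on the heavier Theorem~\ref{thmALKC} and the inverse construction in Proposition~\ref{Abij}. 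The one thing worth noting explicitly, which you leave implicit, is that the divisibility $m\mid\charge(P(ww_0^\lambda))$ (needed for the first form of the constant to even be an integer) falls out for free in your argument since $\widetilde{\rho}_1$ is an integer by construction; the paper instead has it from Proposition~\ref{Abij}.
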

\begin{proof}
	From Theorem \ref{thmALKC} we know $ww_0^\lambda\in\mfF^{-1}{\cdot}w_0^\lambda=\LKC_{w_0^\lambda}$. And by Lemma \ref{lemAMBC}, we have
	$$\tilde{\rho}_i(ww_0^\lambda)=\sum_{j=0}^{l-1}\left\lceil\frac{w(i+jm)}{n}\right\rceil-l-\sum_{j=1}^{i-1}\lch_j(P(ww_0^\lambda)).$$
	Denote $P=P(ww_0^\lambda)$ for simplicity. From the construction in Proposition \ref{Abij}, we have
	$$\tilde{\rho}_i(ww_0^\lambda)=\sum_{j=0}^{l-1}\left\lceil\frac{P_{i,j+1-e+\sum_{j=1}^{i-1}\lch_j(P)}}{n}\right\rceil-l-\sum_{j=1}^{i-1}\lch_j(P)=-e=-\sum_{i=1}^{m-1}\lch_i(P)+\frac{\charge(P)}{m}.$$
\end{proof}
\begin{Corollary}\label{corphim}
	$$\phi^m(\LKC_{w_0^\lambda})=\LKC_{w_0^\lambda}.$$
\end{Corollary}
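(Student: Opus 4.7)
The plan is to reduce the statement to two simple observations: that the rotation $\phi$ carries left Knuth classes to left Knuth classes, and that $\phi^m$ fixes $w_0^\lambda$ when $\lambda=(l^m)$.

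First I would verify that $\phi$ preserves left Knuth equivalence. From the identity $\phi(w)^{-1}(i) = w^{-1}(i-1)+1$ one computes that $L(\phi(w)) = L(w)+1$ (indices taken in $[\overline{n}]$), and clearly $\phi(s_k w) = s_{k+1}\phi(w)$. Consequently, the incomparability condition defining a left Knuth move $w \sim_{\LKC} s_k w$ translates exactly into $\phi(w) \sim_{\LKC} s_{k+1}\phi(w)$, so $\phi$ restricts to a bijection $\LKC_w \to \LKC_{\phi(w)}$ for every $w \in \widetilde{S_n}$.

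Next I would show $\phi^m(w_0^\lambda) = w_0^\lambda$. Since $s(x) = x+1$, we have $\phi^m(u)(i) = u(i-m) + m$ for any $u \in \widetilde{S_n}$. Writing $w_0^\lambda$ in window form (extended periodically) as $w_0^\lambda((j-1)m+k) = jm-k+1$ for $j \in \mbZ$, $k \in [m]$, substitution with $i = (j-1)m+k$ yields
\[
\phi^m(w_0^\lambda)(i) \;=\; w_0^\lambda((j-2)m+k) + m \;=\; ((j-1)m-k+1) + m \;=\; jm-k+1 \;=\; w_0^\lambda(i).
\]

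Combining these two observations, $\phi^m(\LKC_{w_0^\lambda}) = \LKC_{\phi^m(w_0^\lambda)} = \LKC_{w_0^\lambda}$. There is no real obstacle here: the rectangular shape of $\lambda$ is exactly what makes $\phi^m$ fix $w_0^\lambda$, and the compatibility of $\phi$ with Knuth moves is a formal consequence of $\phi$ being an automorphism cyclically permuting the simple reflections. Alternatively, one could argue via Lemma \ref{rotationAMBC}: for any $w \in \LKC_{w_0^\lambda}$ one has $Q(\phi^m(w)) = Q(w)+m = T^\lambda + m = T^\lambda$ (since adding $m$ cyclically permutes each row of $T^\lambda$), and by Lemma \ref{equidist}(1) iterated $m$ times, $\charge(P(\phi^m(w))) \equiv \charge(P(w)) - m \equiv 0 \pmod m$, placing $\phi^m(w)$ back into $\LKC_{w_0^\lambda}$ by Theorems \ref{thmknuth} and \ref{knuthQ}.
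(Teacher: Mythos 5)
Your main argument is correct and takes a genuinely different, cleaner route than the paper. The paper writes out the window form of $\phi^m(w)$ explicitly and asserts that the three conditions defining the fundamental box $\mfF$ (Definition~\ref{def:fundbox}, via Theorem~\ref{thmALKC}) can be checked directly; you instead factor the statement into two elementary observations: that the rotation automorphism $\phi$ carries left Knuth classes bijectively to left Knuth classes (a formal consequence of $L(\phi(w))=L(w)+1$ and $\phi(s_kw)=s_{k+1}\phi(w)$, since shifting a descent set by $1$ preserves incomparability), and that $\phi^m$ fixes $w_0^\lambda$ precisely because $\lambda=(l^m)$ is rectangular. Both verifications are correct, and this decomposition is more conceptual: it avoids any appeal to the technical characterization of $\mfF$ and isolates exactly where the rectangular hypothesis enters. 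One caution on your ``alternatively'' sketch at the end: knowing $Q(\phi^m(w))=T^\lambda$ and $m\mid\charge(P(\phi^m(w)))$ places $\phi^m(w)$ in the left cell $L_{T^\lambda}$ with its insertion tabloid in the correct tabloid Knuth class, but on its own this does not yet pin down the left Knuth class of $\phi^m(w)$, since a priori different left Knuth classes inside $L_{T^\lambda}$ can share the same range of insertion tabloids while differing by a constant weight shift; to close that alternative route one would also need to match the weight, e.g.\ via Lemma~\ref{rotationAMBC} and Corollary~\ref{corconst}. Your primary argument, however, needs no such supplement and stands complete as written.
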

\begin{proof}
	By definition of $\phi$,
	\begin{align*}
	\phi^m(w)= & \left[w(1+(l-1)m)+m-n,w(2+(l-1)m)+m-n,\ldots,w(n)+m-n,\right.\\
	& \left.w(1)+m,w(2)+m,\ldots,w(m)+m,\ldots,w((l-1)m)+m\right].
	\end{align*}
	And it can be checked easily that $\phi^m(w)\in\LKC_{w_0^\lambda}$ if $w\in\LKC_{w_0^\lambda}$ using the three conditions of $\mfF$ and Theorem \ref{thmALKC}.
\end{proof}

	

We now study the explicit form of the right (resp. left) cells in $C_\lambda$ and relate them to the Knuth classes. In particular, we show that $w_0^\lambda{\cdot}R_{T^\lambda}$ equals the set of all affine permutations satisfying only the first two conditions of the fundamental box $\mfF$ (in Definition \ref{def:fundbox}) in the following proposition.

\begin{Proposition}
	\label{propleftcell}
	The left cell $L_{T^\lambda}$ containing $w_0^\lambda$ is the collection of all affine permutations satisfying:
	\begin{enumerate}
		\item $w(i)<w(i+m)<\ldots<w(i+m(l-1))<w(i)+n$ for $i\in[m]$;
		\item $w(1+(j-1)m)>w(2+(j-1)m)>\ldots>w(jm)$ for $j\in[l]$.
	\end{enumerate}
\end{Proposition}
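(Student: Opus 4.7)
The plan is to show the two inclusions separately. Denote by $\mcL$ the set of affine permutations satisfying (1) and (2). The inclusion $\mcL \subseteq L_{T^\lambda}$ is immediate: conditions (1) and (2) are exactly the hypotheses of Lemma \ref{lemAMBC}, so every $w \in \mcL$ has $Q(w) = T^\lambda$, placing it in $L_{T^\lambda}$ by Theorem \ref{cell:thm}.

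For the reverse inclusion, I plan an explicit-construction-plus-injectivity argument. Given $w \in L_{T^\lambda}$, write $\Phi(w) = (P, T^\lambda, \rho)$. Using the convention $P_{i,a+l} = P_{i,a} + n$ to extend the indexing of $P$-entries to all of $\mbZ$, define $w' \in \widetilde{S_n}$ by
\begin{equation*}
w'(jm + 1 - i) := P_{i,\,\rho_i + j}, \qquad i \in [m],\ j \in [l],
\end{equation*}
extended periodically by $w'(x + n) = w'(x) + n$. The goal is to verify that $w' \in \mcL$ and $\Phi(w') = (P, T^\lambda, \rho)$; injectivity of $\Phi$ then forces $w = w'$, and hence $w \in \mcL$.

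The verifications split into four pieces. (a) $w'$ is an affine permutation: the residues cover $[\overline{n}]$ since the rows of $P$ partition $[\overline{n}]$, and the normalization $\sum_{k=1}^n w'(k) = n(n+1)/2$ unpacks to $\sum_i \rho_i = 0$, which is built into the definition of $\Omega$. (b) Condition (1) is immediate from the monotonicity $P_{i,a+1} > P_{i,a}$ together with the wrap $P_{i,a+l} = P_{i,a} + n$. (c) Condition (2), after substituting $i^* = m+1-i$, becomes $P_{i+1,\rho_{i+1}+j} > P_{i,\rho_i+j}$ for all $i \in [m-1]$ and $j \in [l]$, which by Definition \ref{deflch} is equivalent to $\rho_{i+1} - \rho_i \geq \lch_i(P)$. (d) Applying Lemma \ref{lemAMBC} to $w'$ and using the ceiling identity $\sum_{j=1}^l \lceil P_{i,\rho_i+j}/n \rceil = l + \rho_i$ recovers $\Phi(w') = (P, T^\lambda, \rho)$ on the nose.

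The main obstacle is step (c): extracting $\rho_{i+1} - \rho_i \geq \lch_i(P)$ from the dominance of $\rho$ in $(P, T^\lambda, \rho)$. This rests on two rectangular-shape computations: $s(T^\lambda) = 0$, which follows by a direct check from $T^\lambda_{i,j} = i + (j-1)m$; and $s_{i+1}(P) - s_i(P) = \lch_i(P)$, which holds because every row of $\lambda = (l^m)$ has the same length, so the first-row index $i'$ appearing in the definition of $s_i$ equals $1$ for all $i$. Combined, these identify the segmentwise monotonicity of $\widetilde{\rho} = \rho - s(P)$ afforded by dominance with precisely the inequality needed in (c).
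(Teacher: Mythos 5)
Your first inclusion matches the paper's; the interest is in the reverse inclusion, where you take a genuinely different route. The paper invokes Shi's result that left cells are left-connected, reduces to showing $\mcC$ is closed under left multiplication by a simple reflection whenever the product stays in $L_{T^\lambda}$, and rules out the bad cases by exhibiting a stream of density $l{+}1$ or an anti-stream of density $m{+}1$, contradicting $\lambda(s_kw)=(l^m)$. You instead bypass connectedness entirely: you write down an explicit inverse formula $w'(jm+1-i)=P_{i,\rho_i+j}$, verify $w'\in\mcC$, check via Lemma \ref{lemAMBC} that $\Phi(w')=(P,T^\lambda,\rho)=\Phi(w)$, and conclude $w=w'$ from $\Psi\circ\Phi=\mathrm{id}$. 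The calculations are sound: the $j$-th ceiling identity gives $\sum_j\lceil P_{i,\rho_i+j}/n\rceil=l+\rho_i$; for the rectangle $s(T^\lambda)=0$ and $s_{i+1}(P)-s_i(P)=\lch_i(P)$; and dominance gives the strict inequality $\rho_{i+1}-\rho_i>\lch_i(P)$, which more than suffices for the $\geq$ you need in (c) (you phrase this as ``precisely the inequality,'' but dominance actually gives strictly more; this is harmless). The trade-off: your argument leans on the full strength of Theorem \ref{thminverseAMBC} (the $\Psi\circ\Phi=\mathrm{id}$ identity and the dominant-representative description), whereas the paper leans on left-connectedness but only uses elementary stream/anti-stream counting within the AMBC. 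Your construction has the advantage of being constructive and self-contained within AMBC, and in fact reconstructs the $\mfF^{-1}$-parametrization (cf.\ Proposition \ref{Abij}); the paper's argument is more robust in that it does not need to know the exact inverse formula.
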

\begin{proof}
	For simplicity, we denote the collection satisfying the two conditions above as $\mcC$.
	
	By affine matrix ball construction, we know the affine permutations in $\mcC$ has recording tabloid $T^\lambda$, hence lying in $L_{T^\lambda}$.
	
	And by \cite{shi2006kazhdan}, any left cell is left-connected, hence it suffices to show that if $w\in\mcC$ and $s_kw\in L_{T^\lambda}$, then $s_kw\in\mcC$. Suppose on the contrary that $s_kw\notin\mcC$, and suppose $w(i+jm)\equiv k \,(\MD n),w(i'+j'm)\equiv k+1 \,(\MD n)$ for some $i,i'\in[m]$ and $j,j'\in[0,l-1]$.
	
	Case 1: $j=j'$. Then if in addition $i<i'$, or $i>i',w(i'+j'm)-w(i+jm)\ge n+1$, we have $s_kw\in\mcC.$ It remains to consider the case $j=j',i=i'+1,$ and $w(i'+j'm)-w(i+jm)=1.$ But in this situation the matrix balls $(i-1+jm,w(i+jm)),(i+jm,w(i+jm)+1)$ can be contained in a stream with density $l+1$, so $\lambda_1(s_kw)\ge l+1,$ which contradicts $s_kw\in L_{T^\lambda}.$
	
	Case 2: $j\ne j'$ and $i\ne i'$, then clearly $s_kw\in\mcC.$
	
	Case 3: $j< j'$ and $i= i'$. Since
	$$w(i)<w(i+m)<\ldots<w(i+(l-1)m)<w(i)+n,$$
	we know $j'=j+1$ and $w(i+(j+1)m)-w(i+jm)=1.$ In this situation, we have an anti-stream of $m+1$ matrix balls in $s_kw$:
	\begin{align*}
	& (1+jm,w(1+jm))\ge_{SW}\ldots\ge_{SW}(i+jm,w(i+jm)+1)\ge_{SW}(i+(j+1)m,w(i+jm))\\
	& \ge_{SW}(i+1+(j+1)m,w(i+1+(j+1)m))\ge_{SW}\ldots\ge_{SW}((j+2)m,w((j+2)m).
	\end{align*}
	Therefore $\lambda^{\Tr}_1(s_kw)\ge m+1,$ which is also a contradiction.
	
	Case 4: $j> j'$ and $i= i'$. This is the same as Case 3 after applying the rotation map $\phi$.
\end{proof}

\begin{Proposition}\label{proprcellstr}
	$$\left\{\left.\phi^k(w{\cdot}R_{T^{\lambda}})=R_{\phi^k(\overline{w}(T^\lambda))}\,\right| k\in[0,m-1],w\in \mfF^{-1}\right\}$$
	is the collection of all right cells in $C_\lambda$. Moreover, $$\ell(ww_0^\lambda w')=\ell(w)+\ell(w_0^\lambda)+\ell( w')$$ for $w\in\mfF^{-1}$ and $w_0^\lambda w'\in R_{T^\lambda}.$ 
\end{Proposition}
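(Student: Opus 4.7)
The plan is to establish Proposition \ref{proprcellstr} in three stages: the set identity, the exhaustion of right cells, and the length additivity. By Lemma \ref{rotationAMBC}, the rotation $\phi^k$ sends each right cell $R_Y$ to $R_{\phi^k(Y)}$ (since it shifts the insertion tabloid by $+k$), which reduces the set identity $\phi^k(w R_{T^\lambda}) = R_{\phi^k(\overline{w}(T^\lambda))}$ to the $k=0$ case $w R_{T^\lambda} = R_{\overline{w}(T^\lambda)}$.

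For this cell identity, I plan to combine the length additivity with Corollary \ref{corconst}, which already gives $P(w w_0^\lambda) = \overline{w}(T^\lambda)$ so that $w w_0^\lambda \in R_{\overline{w}(T^\lambda)}$, and then propagate from $w_0^\lambda$ through the right cell $R_{T^\lambda}$ via reduced right multiplications. For any $v = w_0^\lambda w' \in R_{T^\lambda}$, length additivity yields $\ell(wv) = \ell(w w_0^\lambda) + \ell(w')$, expressing $wv$ as a reduced product starting from $w w_0^\lambda$. Tracking the insertion tabloid along a reduced expression of $w'$---using Theorem \ref{thmknuth} for right Knuth move steps, and a descent-set analysis for between-Knuth-class transitions within a single right cell---gives $P(wv) = \overline{w}(T^\lambda)$, so $w R_{T^\lambda} \subseteq R_{\overline{w}(T^\lambda)}$. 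Equality follows because left multiplication by $w$ is a bijection on $\widetilde{S_n}$: applying the same argument to $w^{-1}$ places $w^{-1} R_{\overline{w}(T^\lambda)}$ inside a right cell that must contain $R_{T^\lambda}$, which forces equality by disjointness of right cells.

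Exhaustion follows from counting: $\lvert [0, m-1] \times \mfF^{-1} \rvert = m \cdot \frac{n!}{m (l!)^m} = \lvert T(\lambda) \rvert$, matching the count of right cells in $C_\lambda$ from Theorem \ref{cell:thm}(2). Bijectivity of $(k, w) \mapsto \phi^k(\overline{w}(T^\lambda))$ follows from Proposition \ref{Abij} (which, combined with Corollary \ref{corconst}, gives the bijection $\mfF^{-1} \leftrightarrow \{X \in T(\lambda) : m \mid \charge(X)\}$ via $w \mapsto P(w w_0^\lambda) = \overline{w}(T^\lambda)$) together with Lemma \ref{equidist}(1) ($\phi^k$ shifts $\charge$ by $-k \pmod{m}$, so as $k$ varies over $[0, m-1]$ the images sweep the $m$ charge-residue classes exactly once).

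The main obstacle is the length additivity $\ell(w w_0^\lambda w') = \ell(w) + \ell(w_0^\lambda) + \ell(w')$, which I would split into three subidentities. First, $\ell(w w_0^\lambda) = \ell(w) + \ell(w_0^\lambda)$ follows because condition (ii) of Definition \ref{def:fundbox} makes $w$ the minimal coset representative for $\widetilde{S_n}$ modulo the parabolic subgroup $(S_m)^l$ containing $w_0^\lambda$. Second, $\ell(w_0^\lambda w') = \ell(w_0^\lambda) + \ell(w')$ follows from $v = w_0^\lambda w' \in R_{T^\lambda}$ together with Proposition \ref{propdecent}: the left descent set $L(v) = \tau(T^\lambda)$ contains every simple reflection of $(S_m)^l$ (all $\overline{i}$ with $i \not\equiv 0 \pmod m$), which gives the reduced parabolic factorization. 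Third, $\ell(w w_0^\lambda w') = \ell(w w_0^\lambda) + \ell(w')$ requires a direct inversion count via Lemma \ref{lengthformula}, leveraging condition (iii) on $w$ together with the inverse description of $v^{-1} \in L_{T^\lambda}$ from Proposition \ref{propleftcell} to show that no inversion of $w w_0^\lambda$ is cancelled upon right multiplication by $w'$; this delicate bookkeeping is where I expect the real work of the proof to lie.
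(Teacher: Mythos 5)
Your plan correctly identifies the reduction to $k=0$ via Lemma \ref{rotationAMBC}, the base case $P(ww_0^\lambda)=\overline{w}(T^\lambda)$ from Corollary \ref{corconst}, and the exhaustion count via Proposition \ref{Abij} and Lemma \ref{equidist}; those parts are sound. Your first two length sub-identities are also correct, and for essentially the reasons you give. The problem is that your overall logic is inverted relative to what actually works, and the crux is left unproven. You propose to first establish $\ell(ww_0^\lambda w')=\ell(ww_0^\lambda)+\ell(w')$ by a direct inversion count and then feed it into the cell identity $wR_{T^\lambda}=R_{\overline{w}(T^\lambda)}$, but you explicitly defer the inversion count as the hard part. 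The paper does the opposite: it proves the cell identity and the length formula simultaneously, using Shi's right-connectedness of right cells \cite{shi2006kazhdan} to pick a path of right multiplications $w_0^\lambda\to v$ staying entirely inside $R_{T^\lambda}$, a path of left Knuth moves $w_0^\lambda\to ww_0^\lambda$, and a double induction over the resulting commuting square. The content of the induction step is a uniqueness claim ($j=j'$, via \cite[Proposition 3.23]{chmutov2017monodromy} plus a stream/anti-stream case analysis) that the same simple reflection implements the left Knuth move after right multiplication by $s_i$. The length identity then drops out because $P(ww_0^\lambda w's_i)=P(ww_0^\lambda w')$, so the relevant left descent sets agree and the $\pm 1$ length changes on the two sides of the square coincide.

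There is also a conceptual flaw in "tracking the insertion tabloid along a reduced expression of $w'$." The prefixes $w_0^\lambda s_{a_1}\cdots s_{a_j}$ of a reduced word for $v=w_0^\lambda w'$ need not remain in $R_{T^\lambda}$ (right cells are not Bruhat order ideals or filters), so the intermediate insertion tabloids $P(ww_0^\lambda s_{a_1}\cdots s_{a_j})$ are not governed by Theorem \ref{thmknuth}, and a "descent-set analysis for between-Knuth-class transitions within a single right cell" has nothing to say about steps that exit the right cell altogether. To make any version of this work you would need a path of right multiplications that stays in $R_{T^\lambda}$ from the start---which is exactly what the paper takes from \cite{shi2006kazhdan}---and once you commit to such a path, you are led back to the double-induction-with-$j=j'$ argument rather than to reduced-word bookkeeping plus a standalone inversion count.
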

\begin{proof}
	For any $w\in\mfF^{-1}= \LKC_{w_0^\lambda}{\cdot}w_0^\lambda$, $w_0^\lambda w'\in R_{T^\lambda}$, we claim:
	\begin{enumerate}
		\item $ww_0^{\lambda}$ and $ww_0^\lambda w'$ are in the same right cell;
		\item $w_0^{\lambda}w'\sim_{\LKC}ww_0^\lambda w'$.
	\end{enumerate}
Since any right cell is right-connected \cite{shi2006kazhdan}, we could find a path (right multiplication by simple reflections) in $R_{T^\lambda}$ connecting $w_0^\lambda$ and $w_0^\lambda w'$. Similarly, we have a path (left Knuth moves) in $\LKC_{w_0^\lambda}$ connecting $w_0^\lambda$ and $ww_0^\lambda$. Hence by induction, it suffices to assume $ww_0^\lambda w',ww_0^\lambda w's_i$ being in the same right cell and $ww_0^\lambda w'\sim_{\LKC}s_jww_0^\lambda w'$, and prove $s_jww_0^\lambda w',s_jww_0^\lambda w's_i$ being in the same right cell and $ww_0^\lambda w's_i \sim_{\LKC}s_jww_0^\lambda w's_i$.
\begin{equation*}
\begin{tikzcd}
ww_0^\lambda w' \arrow[r, "\cdot s_i"'] \arrow[d, "s_j \cdot "'] & ww_0^\lambda w's_i \arrow[d, "s_j\cdot"']\\
s_jww_0^\lambda w' \arrow[r, "\cdot s_i"'] & s_jww_0^\lambda w's_i
\end{tikzcd}
\end{equation*}
Denote $P=P(ww_0^\lambda w')$ and $Q=Q(ww_0^\lambda w')$. Since $ww_0^\lambda w'\sim_{\LKC}s_jww_0^\lambda w'$, we know $P(s_jww_0^\lambda w')=P'$, and $Q(s_jww_0^\lambda w')=Q$ where $P$ and $P'$ differs by a Knuth move. Since $ww_0^\lambda w',ww_0^\lambda w's_i$ are in the same right cell, we have $P(ww_0^\lambda w's_i)=P$, and denote $Q(ww_0^\lambda w' s_i)=Q'$. By \cite[Proposition 3.23]{chmutov2017monodromy}, we know there is a unique affine permutation with insertion tabloid $P'$ and recording tabloid $Q'$ that is related to $ww_0^\lambda w' s_i$ by a left Knuth move. Suppose this affine permutation is $s_{j'}ww_0^\lambda w's_i$. And we claim $j=j'$.

Without loss of generality, suppose for some $\overline{\alpha}\in\left[\overline{n}\right]$, $\overline{\alpha}\in\tau(P)$, $\overline{\alpha+1}\notin\tau(P)$, $\overline{\alpha+1}\in\tau(P')$, $\overline{\alpha}\notin\tau(P')$. Hence by Proposition \ref{propdecent}, $(ww_0^\lambda w')^{-1}(\alpha+1)$ is smallest one among $(ww_0^\lambda w')^{-1}(\alpha)$, $(ww_0^\lambda w')^{-1}(\alpha+1)$, and $(ww_0^\lambda w')^{-1}(\alpha+2)$. Similarly, $(ww_0^\lambda w')^{-1}(\alpha+1)$ is smallest one among $(ww_0^\lambda w's_i)^{-1}(\alpha)$, $(ww_0^\lambda w's_i)^{-1}(\alpha+1)$, and $(ww_0^\lambda w's_i)^{-1}(\alpha+2)$. The only possibilities that $j\ne j'$ are the following two cases:

Case 1:  $(ww_0^\lambda w')^{-1}(\alpha+1)<(ww_0^\lambda w')^{-1}(\alpha)<(ww_0^\lambda w')^{-1}(\alpha+2) $, but $(ww_0^\lambda w's_i)^{-1}(\alpha+1)<(ww_0^\lambda w's_i)^{-1}(\alpha+2)<(ww_0^\lambda w's_i)^{-1}(\alpha) $, so $j=\alpha+1$ and $j'=\alpha$. This happens iff $$i=(ww_0^\lambda w')^{-1}(\alpha)=(ww_0^\lambda w')^{-1}(\alpha+2)-1.$$

Since $ww_0^\lambda w'\in C_{l^m}$, we know each matrix ball in $\mathcal{B}_{ww_0^\lambda w'}$, and in particular $((ww_0^\lambda w')^{-1}(\alpha+2),\alpha+2)=(i+1,\alpha+2)$, is contained in an anti-stream of density $m$. But if we replace $(i+1,\alpha+2)$ with $(i+1,\alpha)$ and $(i,\alpha+2)$, we obtain an anti-stream of density $m+1$ in $\mathcal{B}_{ww_0^\lambda w's_i}$, which contradicts $ww_0^\lambda w's_i$ and $ww_0^\lambda w'$ being in the same right cell.

Case 2: $(ww_0^\lambda w')^{-1}(\alpha+1)<(ww_0^\lambda w')^{-1}(\alpha+2)<(ww_0^\lambda w')^{-1}(\alpha) $, but $(ww_0^\lambda w's_i)^{-1}(\alpha+1)<(ww_0^\lambda w's_i)^{-1}(\alpha)<(ww_0^\lambda w's_i)^{-1}(\alpha+2) $, so $j=\alpha$ and $j'=\alpha+1$. This happens iff $$i=(ww_0^\lambda w')^{-1}(\alpha+2)=(ww_0^\lambda w')^{-1}(\alpha)-1.$$
Similar to the previous case, since $ww_0^\lambda w'\in C_{l^m}$, we know each matrix ball in $\mathcal{B}_{ww_0^\lambda w'}$, and in particular $((ww_0^\lambda w')^{-1}(\alpha),\alpha)=(i+1,\alpha)$, is contained in a stream of density $l$. But if we replace $(i+1,\alpha)$ with $(i,\alpha)$ and $(i+1,\alpha+2)$, we obtain a stream of density $l+1$ in $\mathcal{B}_{ww_0^\lambda w's_i}$, which contradicts $ww_0^\lambda w's_i$ and $ww_0^\lambda w'$ being in the same right cell. 

So both cases cannot happen and we arrived at the claim $j=j'.$

By Proposition \ref{propdecent} and the second length formula in Lemma \ref{lengthformula}, we know that $$\ell(s_jww_0^\lambda w' )-\ell(ww_0^\lambda w' )=\ell(s_jww_0^\lambda w' s_i)-\ell(ww_0^\lambda w' s_i).$$
Therefore by induction there is:
\begin{align*}
\ell(ww_0^\lambda w')=  \,\ell(ww_0^\lambda )+\ell(w_0^\lambda w')-\ell(w_0^\lambda)
=\ell(w)+\ell(w_0^\lambda)+\ell(w')
\end{align*}
and the last equality is due to the monotone condition in Proposition \ref{propleftcell} and the first length formula in Lemma \ref{lengthformula}.

Now from the claim, we know $w{\cdot}R_{T^\lambda}$ is contained in the right cell $R_{\overline{w}(T^\lambda)}$. Moreover, we have a map $R_{T^\lambda}\rightarrow R_{\overline{w}(T^\lambda)}$ by $x\mapsto wx$, and in fact left multiplication by $w^{-1}$ gives an inverse of this map, hence $w{\cdot}R_{T^\lambda}=R_{\overline{w}(T^\lambda)}$. Applying rotations we get $\phi^k(w{\cdot}R_{T^\lambda})=R_{\phi^k(\overline{w}(T^\lambda))}$. Lemma \ref{equidist} indicates these are all the right cells in $C_\lambda.$
\end{proof}
 
For $k\in[m-1]$, let $w^{(k)}$ be the following affine permutation:
\begin{align*}
w^{(k)}= \phi^k\left(\left[ \right.\right. &  m-k+1,m-k+2,\ldots,m,1,2,\ldots,m-k,\\
& 2m-k+1,2m-k+2,\ldots,2m,1+m,2+m,\ldots,2m-k,\\
& \ldots\\
& \left.\left.n-k+1,n-k+2,\ldots,n,1+(l-1)m,2+(l-1)m,\ldots,n-k\right]\right).
\end{align*}
\begin{Lemma}\label{lem:wk}
	$(w^{(k)})^{-1}\in L_{T^\lambda}{\cdot}w_0^\lambda$, but $(w^{(k)})^{-1}\notin\mfF^{-1}$. And there exists some $w\in\mfF^{-1}$ and some $i\in[0,n-1]$, such that $(w^{(k)})^{-1}=s_iw$.
\end{Lemma}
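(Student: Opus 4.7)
The plan has four parts. First, I will derive the window formula for $(w^{(k)})^{-1}$. Using that $\phi$ is an automorphism of $\widetilde{S_n}$, we have $(w^{(k)})^{-1} = \phi^k(v^{-1})$, where $v^{-1}$ acts block-wise: within each length-$m$ block, positions $1,\ldots,m-k$ map to positions $k+1,\ldots,m$ and positions $m-k+1,\ldots,m$ map to positions $1,\ldots,k$. Unwinding $\phi^k(v^{-1})(i) = v^{-1}(i-k) + k$ (and invoking periodicity for indices $i-k \le 0$) yields the clean formula
\[
(w^{(k)})^{-1}(i) = \begin{cases} i+k-m, & i \bmod m \in \{1,\ldots,k\},\\ i+k, & i \bmod m \in \{k+1,\ldots,m\}. \end{cases}
\]

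Second, I will verify $(w^{(k)})^{-1} \in L_{T^\lambda} \cdot w_0^\lambda$ by checking conditions (1) and (2) of Definition~\ref{def:fundbox} on $(w^{(k)})^{-1}$ itself (which, by Proposition~\ref{propleftcell}, is equivalent to $(w^{(k)})^{-1} w_0^\lambda \in L_{T^\lambda}$). Condition~(1) is immediate from $(w^{(k)})^{-1}(i+m) = (w^{(k)})^{-1}(i) + m$, which makes each such sequence arithmetic of common difference $m$. Condition~(2) reads off the formula: within each block the first $k$ values form a consecutive run lying strictly below the consecutive run formed by the remaining $m-k$ values.

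Third, I will show $(w^{(k)})^{-1} \notin \mfF^{-1}$ by computing $P := P\bigl((w^{(k)})^{-1} w_0^\lambda\bigr)$ via Lemma~\ref{lemAMBC} (the hypotheses are satisfied by Step~2). This yields
\[
P_i = \bigl\{\,\overline{k+i+jm} : j \in [0,l-1]\,\bigr\}, \qquad i \in [m].
\]
For $i \in [1,m-1]\setminus\{m-k\}$, row $i+1$ is the entrywise shift of row $i$ by $+1$, so $\lch_i(P)=0$; at $i = m-k$, row $m-k = \{m, 2m, \ldots, n\}$ while row $m-k+1$ contains $\overline{1}$ from the wrap $k+(m-k+1)+(l-1)m \equiv 1 \pmod n$, forcing $\lch_{m-k}(P) = 1$. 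Hence $\charge(P) = m-k$, not a multiple of $m$ since $k \in [1,m-1]$, and Proposition~\ref{Abij} excludes $(w^{(k)})^{-1}$ from $\mfF^{-1}$.

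Finally, to produce $w \in \mfF^{-1}$ and $i \in [0,n-1]$ with $(w^{(k)})^{-1} = s_i w$, I will take $i = 2k$ and $w := s_{2k}\,(w^{(k)})^{-1}$. The values $2k$ and $2k+1$ occur in the window of $(w^{(k)})^{-1}$ exactly at the positions $k+m$ and $k+1$ (within one fundamental period), so $s_{2k}$ only swaps these two entries. Conditions~(1) and~(2) of Definition~\ref{def:fundbox} carry over to $w$ by inspection, since the swap keeps the monotone structure inside each residue class and each block. The tabloid $P'$ of $w w_0^\lambda$ differs from $P$ only in rows $k$ and $k+1$ by exchanging $\overline{2k}$ and $\overline{2k+1}$; the resulting $\lch$ adjustments (with a mild case split when $m = 2k$) give $\charge(P') = m$. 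The row-by-row equalities $\lch_i(P') = \Diff_i(w)$ for all $i \in [m-1]$ then establish condition~(3), so $w \in \mfF^{-1}$. The main obstacle in this last step is the pointwise match $\lch_i(P') = \Diff_i(w)$ at $i = k$; it follows from a direct ceiling computation on the explicit window, tracking how the swap shifts a ceiling jump from residue class $m-k$ alone to residue classes $m-k$ and $k$ together.
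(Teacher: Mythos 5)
Your proof is correct and follows the paper's overall structure (window formula, check of conditions (1)--(2), non-membership, explicit $w=s_i(w^{(k)})^{-1}$); the genuine difference is the last step. The paper case-splits on $m-2k$, taking $i=2k$, $m$, or $2k-m$ accordingly, while you take $i=2k$ uniformly. For $l\geq 2$ and $m<2k$ these are distinct simple reflections producing distinct elements of $\mfF^{-1}$, so your choice is a mild simplification of the paper's (the lemma admits non-unique witnesses, as the paper itself remarks). The mechanism that makes the uniform choice work, which your sketch gestures at but should state plainly, is that for $l\geq 2$ both $2k$ and $2k+1$ lie in $[1,n]$, so the swap changes no ceiling $\lceil\,\cdot\,/n\rceil$; hence $\Diff_i(w)=\Diff_i\bigl((w^{(k)})^{-1}\bigr)=\mathbb{1}[i=k]+\mathbb{1}[i=m-k]$ for all $i\in[m-1]$, while the local charges of $P'$ acquire exactly one extra unit at row $k$, matching the $\Diff$ profile. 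Your phrase ``mild case split when $m=2k$'' understates the verification: besides $\lch_k(P')=2$ when $m=2k$, one must separately treat $m=2k\pm 1$ (where $m-k$ lands at row $k\mp 1$), and the full set of cases $m\geq 2k+2$, $m=2k+1$, $m=2k$, $m=2k-1$, $m\leq 2k-2$ all yield $\lch_i(P')=\mathbb{1}[i=k]+\mathbb{1}[i=m-k]$ as required. A secondary difference: for non-membership of $(w^{(k)})^{-1}$ you invoke Proposition~\ref{Abij} via $m\nmid\charge(P)=m-k$, whereas the paper directly exhibits $\Diff_k=\lch_k+1$; both are fine. Finally, when $l=1$ your index $2k$ may exceed $n-1$ and should be read modulo $n$, where it reduces to the paper's $2k-m$ for $m<2k$ (and the ceiling-invariance argument needs a small adjustment at $m=2k$, though the final equality still holds).
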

\begin{proof}
	By definition,
	\begin{align*}
	(w^{(k)})^{-1}=\left[\right. & 1-m+k,2-m+k,\ldots,2k-m,2k+1,2k+2,\ldots,m+k,\\
	& 1+k,2+k,\ldots,2k,2k+1+m,2k+2+m,2m+k,\\
	& \ldots\\
	& 1+k+(l-2)m,2+k+(l-2)m,\ldots,2k+(l-2)m,\\
	& \quad\quad\quad\quad\quad\quad\left. 2k+1+(l-1)m,2k+2+(l-1)m,\ldots,ml+k\right]
	\end{align*}
	It can be checked directly that $(w^{(k)})^{-1}\in L_{T^\lambda}{\cdot}w_0^\lambda$, $\Diff_i((w^{(k)})^{-1})=\lch_i(P((w^{(k)})^{-1}w_0^\lambda))$ when $i\ne k$, but $\Diff_k((w^{(k)})^{-1})=\lch_k(P((w^{(k)})^{-1}w_0^\lambda))+1$. So $(w^{(k)})^{-1}\notin\mfF^{-1}$.
	There are different ways to find a pair of required $w$ and $i$. One way is to take
	\[   i=\left\{
	\begin{array}{ll}
	2k, & m>2k \\
	m, & m=2k \\
	2k-m, & m<2k \\
	\end{array} 
	\right. .\]
	And one can check directly that $s_i(w^{(k)})^{-1}\in \mfF^{-1}.$
\end{proof}

\begin{Proposition}\label{Prop:Fi}
	For any $w\in L_{T^\lambda}{\cdot}w_0^\lambda$, either $w\in \mfF^{-1}$, or there exists some $k\in[m-1]$ and $w'\in L_{T^\lambda}{\cdot}w_0^\lambda$, such that $w=\phi^k(w')(w^{(k)})^{-1}$
	and $\Diff(w')-\lch(P(w'w_0^\lambda))<\Diff(w')-\lch(P(w'w_0^\lambda))$. Moreover, $\ell(w)=\ell(w')+\ell(w^{(k)}).$
\end{Proposition}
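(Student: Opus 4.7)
The plan is to construct $w'$ directly and then verify the three claimed properties. Since $w\in L_{T^\lambda}\cdot w_0^\lambda$, Proposition \ref{propleftcell} says that $w$ already satisfies the monotonicity conditions (1) and (2) of Definition \ref{def:fundbox}, so only condition (3) can fail. An argument mirroring the proof of Proposition \ref{Abij}, combined with Lemma \ref{lemAMBC} which computes $P(ww_0^\lambda)$ directly from the residues of $w$, yields the componentwise bound $\Diff_i(w)\ge \lch_i(P(ww_0^\lambda))$ for all $i\in[m-1]$, with equality throughout if and only if $w\in\mfF^{-1}$. Since $w\notin\mfF^{-1}$ by hypothesis, I would pick an index $k\in[m-1]$ at which the inequality is strict, and define $w':=\phi^{-k}(w\cdot w^{(k)})$; by construction this gives $w=\phi^k(w')(w^{(k)})^{-1}$.

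It then remains to verify three things: that $w'\in L_{T^\lambda}\cdot w_0^\lambda$; that the excess vector $\Diff(\cdot)-\lch(P(\cdot\,w_0^\lambda))$ strictly decreases when passing from $w$ to $w'$; and that $\ell(w)=\ell(w')+\ell(w^{(k)})$. The first two are handled by a direct computation of the window notation of $w\cdot w^{(k)}$. The observation is that $w^{(k)}=\phi^k(u^{(k)})$, where $u^{(k)}$ acts as a cyclic shift by $k$ within each of the $l$ blocks of size $m$; so composition with $w$ on the right reshuffles the values $w(1),\ldots,w(n)$ within each block in a controlled way, and applying $\phi^{-k}$ then restores the original block indexing. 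From this, monotonicity conditions (1) and (2) for $w$ propagate to $w'$. By Lemma \ref{lem:wk}, $w^{(k)}$ carries excess exactly one at index $k$ and zero elsewhere, so the excess of $w'$ drops by one at index $k$ and is unchanged at all other indices, giving both the membership in $L_{T^\lambda}\cdot w_0^\lambda$ and the strict decrease of the excess vector.

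The length-additivity $\ell(w)=\ell(w')+\ell(w^{(k)})$ is expected to be the main obstacle. The strategy is to invoke the first length formula in Lemma \ref{lengthformula}, write $\ell(w)$ as the sum over pairs of $\lvert\lfloor(w(j)-w(i))/n\rfloor\rvert$, and argue that each contribution splits additively between $\phi^k(w')$ and $(w^{(k)})^{-1}$ with no cancellation. Equivalently, using Proposition \ref{propdecent}, one checks that no right descent of $\phi^k(w')$ is cancelled by right-multiplication by $(w^{(k)})^{-1}$: the descent structure of $w^{(k)}$ is concentrated exactly at the positions where $w$ has strictly positive excess, and at those positions $\phi^k(w')$ has no competing descent by the choice of $k$. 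This produces a reduced factorization and hence the length equality. Iterating the construction then reduces an arbitrary element of $L_{T^\lambda}\cdot w_0^\lambda$ to an element of $\mfF^{-1}$ in finitely many steps, since the excess vector is nonnegative and strictly decreases at each stage.
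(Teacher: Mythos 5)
Your proposal matches the paper's own proof: you construct the same element $w'=\phi^{-k}(ww^{(k)})$, check the monotone conditions and the drop of the excess vector by an explicit computation of the window of $w'$, and deduce $\ell(w)=\ell(w')+\ell(w^{(k)})$ from Shi's first length formula, exactly as the paper does. One minor caution: the excess drop by one at index $k$ cannot be inferred from Lemma~\ref{lem:wk} alone (excess is not multiplicative under composition, and the bound $\Diff_i\ge\lch_i$ together with its case of equality really comes from the alcove-equation argument in Proposition~\ref{prop:A}), so the direct window computation you mention is indeed the load-bearing step, with the strict inequality $\Diff_k(w)>\lch_k(P(ww_0^\lambda))$ supplying precisely the inequalities $w(1+k+jm)>w((j+1)m+k)$ that keep $w'$ increasing within blocks.
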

\begin{proof}
	For any given $w\in L_{T^\lambda}{\cdot}w_0^\lambda\setminus \mfF^{-1}$, there exists some $k\in[m-1]$ such that $\Diff_k(w)>\lch_k(P(ww_0^\lambda))$. Let $w'=\phi^{-k}(ww^{(k)})$. Direct computation gives that $w'$ equals
	\begin{align*}
	\left[\right. & w(m{+}1){-}k,w(m{+}2){-}k,\ldots,w(m{+}k){-}k,w(1{+}k){-}k,w(2{+}k){-}k,\ldots,w(m){-}k,\\
	& w(2m{+}1){-}k,w(2m{+}2){-}k,\ldots,w(2m{+}k){-}k,w(1{+}k{+}m){-}k,w(2{+}k{+}m){-}k,\ldots,w(2m){-}k,\\
	& \ldots\\
	& w(1){+}n{-}k,w(2){+}n{-}k,\ldots,w(k){+}n{-}k,\\
	&\;\;\;\qquad\quad\qquad\qquad\qquad\qquad\qquad\quad \left.w(1{+}k{+}(l{-}1)m){-}k,w(2{+}k{+}(l{-}1)m){-}k,\ldots,w(n){-}k\right].\\
	\end{align*}
	Since $\Diff_k(w)>\lch_k(P(ww_0^\lambda))$, we have $w((j+1)m+k)-k>w(1+k+jm)-k$ for all $j\in[0,l-1]$, and clearly $w'$ preserves monotone conditions from $w$. Moreover,
	\[   \Diff_i(w')-\lch_i(P(w'w_0^\lambda))=\left\{
	\begin{array}{ll}
	\Diff_i(w)-\lch_i(P(ww_0^\lambda)), & i\ne k; \\
	\Diff_k(w)-\lch_k(P(ww_0^\lambda))-1, & i=k. \\
	\end{array} 
	\right. \]
	By Shi's length formula in Lemma \ref{lengthformula}, $$\ell(w)=\ell(\phi^{-k}(w))=\ell(w')+l(m-k)k=\ell(w')+\ell(w^{(k)}).$$
\end{proof}
An immediate corollary is the following:
\begin{Corollary}\label{Cor:Fi}
	For any $w\in w_0^\lambda{\cdot}R_{T^\lambda}$, we have the following expression:
	$$w=w^{(k_1)}\phi^{k_1}(w^{(k_2)}\phi^{k_2}(\cdots w^{(k_\varepsilon)}\phi^{k_\varepsilon}(w')\cdots))$$
	where $w'\in \mfF$ and $\{k_1,\ldots,k_\varepsilon\}=\{1^{d_1},2^{d_2},\ldots,(m-1)^{d_{m-1}}\}$ as a multi-set (the order of $k_i$'s does not matter) and $d_j=\Diff_j(w^{-1})-\lch_j(P(w^{-1}w_0^\lambda))$ for $j\in[m-1]$.
\end{Corollary}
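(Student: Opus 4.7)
The plan is to invert into the setting of Proposition \ref{Prop:Fi} and then iterate it. Since $(w_0^\lambda)^{-1} = w_0^\lambda$, and Proposition \ref{propinvAMBC} tells us that inversion swaps the insertion and recording tabloids, the map $x \mapsto x^{-1}$ is a bijection between $w_0^\lambda{\cdot}R_{T^\lambda}$ and $L_{T^\lambda}{\cdot}w_0^\lambda$. So for $w \in w_0^\lambda{\cdot}R_{T^\lambda}$, I would set $u_0 = w^{-1}$ and apply Proposition \ref{Prop:Fi} to $u_0$.

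Next, I would iterate: if $u_i \notin \mfF^{-1}$, Proposition \ref{Prop:Fi} produces $k_{i+1} \in [m-1]$ and $u_{i+1} \in L_{T^\lambda}\cdot w_0^\lambda$ with $u_i = \phi^{k_{i+1}}(u_{i+1})(w^{(k_{i+1})})^{-1}$. Because $\phi$ is a group automorphism of $\widetilde{S_n}$ it commutes with inversion, so inverting this relation yields $u_i^{-1} = w^{(k_{i+1})}\phi^{k_{i+1}}(u_{i+1}^{-1})$. Telescoping after finitely many steps,
\begin{equation*}
w = u_0^{-1} = w^{(k_1)}\phi^{k_1}\bigl(w^{(k_2)}\phi^{k_2}\bigl(\cdots w^{(k_\varepsilon)}\phi^{k_\varepsilon}(u_\varepsilon^{-1})\cdots\bigr)\bigr),
\end{equation*}
and once $u_\varepsilon \in \mfF^{-1}$ we set $w' := u_\varepsilon^{-1} \in \mfF$, producing the required nested form.

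For termination and the multiset identification I would appeal to the monotonicity clause of Proposition \ref{Prop:Fi}: each application drops $\Diff_{k_{i+1}}(u_{i+1}) - \lch_{k_{i+1}}(P(u_{i+1} w_0^\lambda))$ by exactly one, while leaving all other $\Diff_j - \lch_j$ unchanged. Consequently, after $\varepsilon = \sum_{j=1}^{m-1} d_j$ steps all of these differences vanish and we are in $\mfF^{-1}$; moreover, index $j$ appears in $\{k_1,\ldots,k_\varepsilon\}$ with multiplicity exactly $d_j = \Diff_j(w^{-1}) - \lch_j(P(w^{-1}w_0^\lambda))$. The one point I expect to require some care is the parenthetical remark that \emph{the order of the $k_i$'s does not matter}: at a given stage several indices may be eligible, so the proposition does not canonically determine $k_{i+1}$. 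The argument above shows that every valid sequence of choices yields a nested decomposition with the same underlying multiset, so the multiset is intrinsic to $w$ while the ordering merely reflects the choices available in Proposition \ref{Prop:Fi}.
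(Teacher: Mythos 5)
Your argument is correct and matches the paper's intent: the corollary is stated without proof as an ``immediate'' consequence of Proposition \ref{Prop:Fi}, and the intended argument is precisely the iteration you describe (pass to $u_0 = w^{-1} \in L_{T^\lambda}\cdot w_0^\lambda$ using $(w_0^\lambda)^{-1}=w_0^\lambda$ and the inversion of cells, apply Proposition \ref{Prop:Fi} repeatedly, invert back). Your bookkeeping on the coordinate-wise drop of $\Diff-\lch$ correctly fixes the evident typo in the displayed inequality of Proposition \ref{Prop:Fi}, and your handling of termination and of the multiset identification is exactly right.
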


\begin{Proposition}\label{propnumcomb}
	Let $w\in w_0^\lambda{\cdot}R_{T^\lambda}$ and $$\{j_1<j_2<\ldots<j_s\}=\left\{j\in[m-1]\mid \Diff_j(w^{-1})>\lch_j(w^{-1}w_0^\lambda)\right\}.$$
	Then the number of elements in $\Psi^{-1}(w_0^\lambda w)$ is $$\#\Psi^{-1}(w_0^\lambda w)=\#\Psi^{-1}(w^{-1}w_0^\lambda )=\begin{pmatrix}\displaystyle m \\ j_1,j_2-j_1,\ldots,j_s-j_{s-1},m-j_s \end{pmatrix}.$$
\end{Proposition}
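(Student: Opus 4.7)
\medskip

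\noindent\textbf{Proof plan.} For rectangular $\lambda=(l^m)$ there is a single segment of length $m$, so the fiber $\Psi^{-1}(v)$ is in bijection with the segmentwise rearrangements of $\widetilde{\rho}(v)$, and its cardinality equals the multinomial coefficient determined by the multiplicities of values in $\widetilde{\rho}(v)$. The plan is therefore to compute $\widetilde{\rho}$ explicitly for the element of interest and read off these multiplicities from the data $d_i=\Diff_i(w^{-1})-\lch_i(P(w^{-1}w_0^\lambda))$.

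First I would dispense with the equality $\#\Psi^{-1}(w_0^\lambda w)=\#\Psi^{-1}(w^{-1}w_0^\lambda)$. Since $(w_0^\lambda w)^{-1}=w^{-1}w_0^\lambda$, Proposition~\ref{propinvAMBC} gives $\widetilde{\rho}(w^{-1}w_0^\lambda)=-\widetilde{\rho}(w_0^\lambda w)^{\text{s.}\rev}$. Negation and segmentwise reversal both preserve the multiset of entries within each segment, and in particular the pattern of constant runs, so the two fiber sizes agree.

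For the substantive count, set $x:=w^{-1}w_0^\lambda$. Since $w\in w_0^\lambda{\cdot}R_{T^\lambda}$, the element $w_0^\lambda w=x^{-1}$ lies in $R_{T^\lambda}$, hence $x\in L_{T^\lambda}$ and by Proposition~\ref{propleftcell} satisfies the two monotone conditions. Lemma~\ref{lemAMBC} then yields $\Phi(x)=(P(x),T^\lambda,\rho(x))$ with explicit formulas; a direct inspection of the entries $T^\lambda_{i,j}=(j-1)m+i$ shows $\lch_i(T^\lambda)=0$ for all $i$, so $s(T^\lambda)=0$ and $\widetilde{\rho}(x)=\rho(x)-s(P(x))$. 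The core algebraic step is the substitution $w^{-1}(k)=x(w_0^\lambda(k))$ combined with $w_0^\lambda(i+jm)=(j+1)m+1-i$ in the definition of $\Diff_i(w^{-1})$ from \eqref{eq-diff}; the resulting sum telescopes against the ceiling-sum formula for $\rho_i(x)$ in Lemma~\ref{lemAMBC}, giving $\rho_{i+1}(x)-\rho_i(x)=\Diff_i(w^{-1})$, and hence $\widetilde{\rho}_{i+1}(x)-\widetilde{\rho}_i(x)=d_i$.

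Because $\Phi(x)$ lies in the image of $\Phi$ (dominant in the paper's sense), each $d_i\geq 0$, and $d_i>0$ exactly when $i\in\{j_1,\ldots,j_s\}$. Consequently $\widetilde{\rho}(x)$ is weakly increasing with constant runs of sizes $j_1,\,j_2-j_1,\,\ldots,\,m-j_s$, so the number of segmentwise rearrangements equals $\binom{m}{j_1,\,j_2-j_1,\,\ldots,\,m-j_s}$, which is the desired fiber size. The main technical obstacle I anticipate is executing the telescoping identity $\rho_{i+1}(x)-\rho_i(x)=\Diff_i(w^{-1})$ cleanly: both sides are sums of ceilings of $x$-values at shifted window positions, and one must verify that the consecutive differencing in $i$ on the $\rho$ side aligns precisely with the pairing inside each block of length $m$ on the $\Diff$ side after the $w_0^\lambda$ substitution reverses order within blocks. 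Once that identity is secured, the remainder is the standard multiset-permutation count.
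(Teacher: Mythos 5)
Your proposal is correct and follows essentially the same route as the paper: both compute $\widetilde{\rho}_{i+1}(w^{-1}w_0^\lambda)-\widetilde{\rho}_i(w^{-1}w_0^\lambda)=\Diff_i(w^{-1})-\lch_i\bigl(P(w^{-1}w_0^\lambda)\bigr)$ from Lemma~\ref{lemAMBC} and then read off the multinomial count via Theorem~\ref{thminverseAMBC}. The only difference is cosmetic: you spell out the substitution $w^{-1}(k)=x(w_0^\lambda(k))$, the fact that $s(T^\lambda)=0$, and the reduction $\#\Psi^{-1}(w_0^\lambda w)=\#\Psi^{-1}(w^{-1}w_0^\lambda)$ via Proposition~\ref{propinvAMBC}, which the paper treats as immediate.
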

\begin{proof}
	Since$$\rho_i(w^{-1}w_0^\lambda)=\sum_{j=0}^{l-1}\left\lceil\frac{w^{-1}(i+jm)}{n}\right\rceil-l,$$we have
	\begin{align*}
	\tilde{\rho}_{i+1}(w^{-1}w_0^\lambda){-}\tilde{\rho}_{i}(w^{-1}w_0^\lambda)=& \sum_{j=0}^{l-1}\left\lceil\frac{w^{-1}(i+1+jm)}{n}\right\rceil{-}\sum_{j=0}^{l-1}\left\lceil\frac{w^{-1}(i+jm)}{n}\right\rceil{-}\lch_i(P(w^{-1}w_0^\lambda))\\
	= & \Diff_i(w^{-1}w_0^\lambda)-\lch_i(P(w^{-1}w_0^\lambda)).
	\end{align*}
	Therefore,
	\begin{align*}
	\tilde{\rho}_1(w^{-1}w_0^\lambda)=\ldots=\tilde{\rho}_{j_1}(w^{-1}w_0^\lambda)<\tilde{\rho}_{j_1+1}(w^{-1}w_0^\lambda)=\ldots=\tilde{\rho}_{j_2}(w^{-1}w_0^\lambda)<\tilde{\rho}_{j_2+1}(w^{-1}w_0^\lambda)=\ldots\\
	\ldots =\tilde{\rho}_{j_s}(w^{-1}w_0^\lambda)<\tilde{\rho}_{j_s+1}(w^{-1}w_0^\lambda)=\ldots=\tilde{\rho}_m(w^{-1}w_0^\lambda).
	\end{align*}
	Hence by Theorem \ref{thminverseAMBC}, $$\#\Psi^{-1}(w_0^\lambda w)=\#\Psi^{-1}(w^{-1}w_0^\lambda )=\begin{pmatrix}\displaystyle m \\ j_1,j_2-j_1,\ldots,j_s-j_{s-1},m-j_s \end{pmatrix}.$$
\end{proof}
\section{Affine Springer fibers and their geometry}

In this section we will introduce the geometric spaces that will appear in this paper, and explain some of their basic properties.

Denote by $\mathcal{K}:=\mathbb{C}((t))$ and $\mathcal{O}:=\mathbb{C}[[t]]$ the Laurent power series and power series algebras with complex coefficients respectively.

Let $G$ be a reductive group, $B\subset G$ a \textit{Borel subgroup}  and $T\subset B$ a \textit{maximal torus}. Denote by $\mathfrak{g}$ the Lie algebra of $G$. The \textit{root system} of $G$ of roots, weights, coroots and coweights is given by $(R,\mathbb{X}, R^\vee,\mathbb{X}^\vee)$. Further the choice of $B$, gives a choice of positive roots $R^+$. We also have the set of \textit{affine roots}, given by $R_{\aff}\coloneqq R\times\mathbb{Z}\delta\cup\{0\}\times\mathbb{Z}\delta$, where $\delta$ is the generator of the affine direction. Associated to this, we have the \textit{Weyl group} $W$, the \textit{affine Weyl group} $\widetilde{W}\coloneqq W\ltimes \mathbb{Z}R^\vee$ and the \textit{extended affine Weyl group} $\widetilde{W}_{\text{ext}}\coloneqq W\ltimes\mathbb{X}^\vee$. In the case of a simply connected group, we have $\mathbb{Z}R^\vee=\mathbb{X}^\vee$ and thus the affine Weyl group and the extended affine Weyl group agree.

We can now construct an \textit{Iwahori subgroup} $I$ of $G(\mathcal{O})\subset G(\mathcal{K})$ via the pullback diagram
\[\begin{tikzcd}
I\ar[r,hookrightarrow]\ar[d] & G(\mathcal{O})\ar[d,"t=0"]\\
B\ar[r,hookrightarrow] & G
\end{tikzcd}\]
With this, we are ready to define the \textit{affine flag variety} $\mcF l$ as the ind-scheme whose closed points are given by the quotient $G(\mathcal{K})/I$. We will only consider the properties of the reduced structure of this space so we omit the details of its scheme structure.

The affine flag variety has a \textit{Schubert decomposition} into locally closed subsets given by $I$-orbits labeled by $\widetilde{W}_{\text{ext}}$. This is given by a natural inclusion $\widetilde{W}_{\text{ext}}=N_{G(\mathcal{K})}(T(\mathcal{K}))/T(\mathcal{O})\hookrightarrow\mcF l$. Considering the $I$-orbits we get
$$\bigsqcup_{w\in\widetilde{W}_{\text{ext}}} IwI/I.$$

This decomposition can also be understood as a decomposition of $\mcF l\times\mcF l$ into $G(\mathcal{K})$-orbits. These again are labeled by $\widetilde{W}_{\text{ext}}$, by considering the $G(\mathcal{K})$-orbit of $(1,w)$. For two points $x,y\in\mcF l$, we say they are in \textit{relative position} $r(x,y)=w$, if $(x,y)\in\mcF l\times\mcF l$ is in the $G(\mathcal{K})$-orbit labeled by $w$. Note that we have $r(x,y)=r(y,x)^{-1}$ and $r(gx,gy)=r(x,y)$ for any $g\in G(\mathcal{K})$.

This decomposition also induces a partial order into $\widetilde{W}_{\text{ext}}$, known as the \textit{Bruhat order}, given by $w\leq w'$ if $IwI/I\subset\overline{Iw'I/I}$ or equivalently $G(\mathcal{K})(1,w)\subset \overline{G(\mathcal{K})(1,w')}$.

We can now extend the notion of relative position to pairs of irreducible subvarieties of $\mcF l$. Namely let $X,Y\subset\mcF l$ be two irreducible subvarieties. Then $X\times Y\subset\mcF l\times\mcF l$ has a stratification into locally closed subsets given by intersections with $G(\mathcal{K})$-orbits. As $X\times Y$ is irreducible there is a unique $G(\mathcal{K})$-orbit, say labeled by $w$, that intersects $X\times Y$ in an open subset. We then denote $r(X,Y)=w$ and say $X$ and $Y$ are in relative position $w$. Note that as a consequence $X\times Y\subset \overline{G(\mathcal{K})(1,w)}$. Thus for any pair of points $x\in X$, $y\in Y$, we have $r(x,y)\leq r(X,Y)$ and generically this is an equality. The same properties as above are thus easy to see, i.e. $r(X,Y)=r(Y,X)^{-1}$ and $r(gX,gY)=r(X,Y)$ for any $g\in G(\mathcal{K})$.

We also have a similar definition of relative position for the \textit{finite flag variety} $G/B$ in terms of $B$-orbits of $G/B$ and $G$-orbits of $G/B\times G/B$ given by elements of the Weyl group $W$, but we omit the details as it is essentially the same construction as above.

We can now introduce the \textit{affine Springer fiber} associated to an element $\gamma\in\mathfrak{g}(\mathcal{K})$ following the work of \cite{KL}. This is a subscheme $\mcF l_\gamma\subset\mcF l$ with closed points given by
$$\mcF l_\gamma\coloneqq\{gI\in\mcF l\mid\gamma\in{}^g\text{Lie}(I)\}.$$

The space $\mcF l$ has automorphisms given by left multiplication with elements of $G(\mathcal{K})$. These automorphisms preserve $\mcF l_\gamma$ if they centralize $\gamma$.

We will now focus on the $G=SL_n$. Recall that for $SL_n$, $\mathbb{X}^\vee\coloneqq\{\mu\in\mathbb{Z}^n\mid \sum_i \mu_i=0\}$. The roots are given by $R=\{e_i-e_j\mid i,j\in[n]\}$ and $R^+=\{e_i-e_j\mid i\leq j\}$. The Weyl group for $SL_n$ is $W=S_n$ and the affine Weyl group is $\widetilde{W}=\widetilde{S_n}$. Since $SL_n$ is simply connected, the extended affine Weyl group is the same as the affine Weyl group as stated above.

We consider the affine Springer fiber for $SL_n$ and $\gamma=N\in\mathfrak{sl}_n(\mathcal{O})$ a generic lift of a nilpotent of $\mathfrak{sl}_n$ corresponding to the partition $(l^m)$. This is the nil-element considered in Lusztig's conjecture \cite{LusztigConjclass} in the case of the nilpotent corresponding to the partition $(l^m)$. We can consider the explicit choice given by 
$$N=\left[\begin{array}{c|c|c|c}
0& \text{I} & \ldots & 0\\
\hline
\vdots & &\ddots & 0\\
\hline
0&0&\ldots& \text{I}\\
\hline
th&0&\ldots& 0
\end{array}\right]
.$$
Here the blocks are $m\times m$ matrices, $\text{I}$ is the identity matrix and $h$ is a regular semisimple element, which without loss of generality can be taken to be a diagonal matrix with distinct non-zero eigenvalues.

Note that $N$ is conjugate to the following block diagonal matrix
$$\begin{bmatrix}
J_1 & 0 &\ldots & 0\\
0 & J_2 &\ldots &0\\
\vdots &&\ddots & \vdots\\
0 &0&\ldots & J_m
\end{bmatrix}$$
where the $l\times l$ diagonal block matrices are $$J_i\coloneqq\begin{bmatrix}0&1&0&\ldots & 0\\ 0&0&1&\ldots &0\\ \vdots&&&\ddots &\vdots\\
0&0&0&\ldots&1\\
th_i&0&0&\ldots &0\end{bmatrix}.$$ 

This matrix is centralized by the block diagonal matrices $f_i$, where all the blocks are replaced with the identity except $J_i$. After conjugating this gives matrices $f_i'$ centralizing $N$. Note that these matrices are elements of $GL_n(\mathcal{K})$, but not $SL_n(\mathcal{K})$, in fact their determinant has valuation $1$. We now introduce a more explicit description of $\mcF l$ for $SL_n$ and then see how these matrices give automorphisms of $\mcF l_N$.

To do this we introduce the notion of a $\mathcal{O}$-lattice $V$ inside $\mathcal{K}^n$, as a $\mathcal{O}$-submodule, such that $V\cong \mathcal{O}^n$ as an $\mathcal{O}$-module. Note that $\bigwedge^n\mathcal{K}^n=\mathcal{K}$, and thus $\bigwedge^n V\subset\mathcal{K}$ is a rank $1$ free $\mathcal{O}$-module and thus we must have $\bigwedge^n V=t^k\mathcal{O}\subset\mathcal{K}$ for some $k$.

With this description we have that for $SL_n$:

$$\mcF l=\{V_0\subset V_1\subset\ldots V_{n-1}\subset t^{-1}V_0\mid V_i\text{ is a lattice in }\mathcal{K}^n\text{ and }\bigwedge^n V_i=t^{-i}\mathcal{O}\}.$$

We can now see that $\bigwedge^n g V=\det(g)\bigwedge^n V$ and thus we have the automorphism of $\mcF l$ given by $(F_i(V_j))_k=(f'_i)^{-1} V_{k-1}$. This has the correct determinant and further as $f'_i$ centralize $N$ this induces automorphisms $F_i$ on $\mcF l_N$.\label{def:Fi}

We will consider automorphisms $F^c\coloneqq F_1^{c_1}\ldots F_m^{c_m}$, for $c$ an $m$-tuple. This is well-defined as the $F_i$ clearly commute with each other. Note that if $\sum c_i=0$ we have $(f')^c=(f'_1)^{-c_1}\ldots (f'_m)^{-c_m}$, which is indeed an element of $SL_n(\mathcal{K})$ and so $F^c$ is just multiplication by an element in $SL_n(\mathcal{K})$. We refer to these transformations as the \textit{translations} and the \textit{group of translations} is denoted by $\Lambda$.

We introduce the parahoric $\widetilde{P}$ generated by the Iwahori $I$ and the simple reflections $s_i$ such that $i\not\equiv 0\;(\text{mod }m)$. We will also need the \textit{parabolic subgroup} $W_P\subset\widetilde{W}$ corresponding to this parahoric, i.e. the subgroup generated by the reflections $s_i$ such that $i\not\equiv 0\;(\text{mod }m)$ as above. For this parabolic subgroup $W_P$, the maximal element is $w_P=w_0^\lambda$ as defined after Theorem \ref{cell:thm}.

We can consider the family of spaces $\mcF l_{N}$ over the space $S^{rs}$ of regular semisimple diagonal $m\times m$-matrices $h$ with non-zero eigenvalues, given by varying $N$ in the obvious way. This is a fiber bundle where each fiber is homeomorphic and thus we can consider the monodromy action on components. Note that the monodromy acts on the points of $\widetilde{W}$ through $S_m\hookrightarrow (S_m)^l\cong W_P\subset\widetilde{W}$. We will use this to get an action of $S_m$ on the set of components.

\section{Components of affine Springer fiber of rectangular type}

In this section we study the points of $\widetilde{W}$ appearing in $\mcF l_N$ as well as the intersection with the orbits of the \textit{parahoric} $\widetilde{P}$ introduced in the previous section. To study this we require certain equations that recur throughout the paper. To introduce them, recall the coweight lattice $\mathbb{X}^\vee$ with the action of $\widetilde{W}$. Further consider the set of connected components of $\mathbb{X}^\vee\otimes\mathbb{R}\smallsetminus\cup_{\alpha}\{\langle\alpha,\mu\rangle\in\mathbb{Z}\}$. The closures of these are known as the set of \textit{alcoves} $\mathfrak{A}$ and we have that $\widetilde{W}$ acts on $\mathfrak{A}$. Consider the \textit{fundamental alcove} given by $$A_0:=\{0\leq\langle\alpha,\mu\rangle\leq 1,\forall\alpha\in R^+\}.$$
Then we have a bijection $\widetilde{W}\cong\mathfrak{A}$ given by $w\mapsto A_w\coloneqq w(A_0)$, i.e. by acting on the fundamental alcove $A_0$. 

The following three sets of equations on alcoves are used throughout this section:

\begin{enumerate}
	\item $
    0\leq\langle e_{i+am}-e_{i+bm} ,A_w\rangle\leq 1 \text{ for } i\in[m]\text{ and }0\le a<b\le l-1,$
	\item $
	    \langle e_{i+jm}-e_{i+1+jm} ,A_w\rangle \ge 0\text{ for } i\in[m-1]\text{ and }j\in[0,l-1],
	$
	\item[$(3)_i$] ($i\in[m-1]$) At least one of the following is satisfied:
	\begin{equation*}
	\begin{split}
	& 0\leq\langle e_{i+1}-e_{i+m},A_w\rangle \leq 1,\\
	& 0\leq \langle e_{i+1+m}-e_{i+2m},A_w\rangle \leq 1,\\
	& \ldots\\
	& 0\leq \langle e_{i+1+(l-2)m}-e_{i+(l-1)m},A_w\rangle \leq 1,\\
	& 0\leq\langle e_{i}-e_{i+1+(l-1)m},A_w\rangle \leq 1.
	\end{split}
	\end{equation*}
	We say $A_w$ satisfies equation (3) if it satisfies $(3)_i$ for all $i\in[m-1].$
\end{enumerate}

In fact these three sets of equations give an equivalent definition of the fundamental box by the following proposition.

\begin{Proposition}\label{prop:A}
    $w\in\mfF$ if and only if the corresponding alcove $A_w$ satisfies equations (1), (2) and (3).
\end{Proposition}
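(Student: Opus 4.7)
The plan is to convert each of the three alcove equations into a combinatorial condition on $w^{-1}$ and match it with the corresponding item in Definition~\ref{def:fundbox}. The central dictionary is the standard fact that for $a, b \in \mathbb{Z}$ with $a \not\equiv b \pmod n$ and any interior point $x$ of $A_w$,
$$\lfloor \langle e_a - e_b, x \rangle \rfloor \;=\; \lfloor (w^{-1}(b) - w^{-1}(a))/n \rfloor.$$
I would establish this by writing $w = t_\mu \sigma$ with $\sigma \in S_n$ and $\mu \in \mathbb{Z} R^\vee$, using $w(y) = \sigma(y) + \mu$ for $y \in A_0$ together with the defining bounds $\langle e_c - e_d, y \rangle \in (0,1)$ for $c<d$, and computing $w^{-1}(j) = \sigma^{-1}(j) - n\mu_j$ for $j \in [n]$.

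Granting this dictionary, equation (1) on $A_w$ becomes, for each $i \in [m]$ and $0 \le a < b \le l-1$, the integer inequality $0 < w^{-1}(i+bm) - w^{-1}(i+am) < n$, which is precisely condition (1) of Definition~\ref{def:fundbox}; analogously equation (2) matches condition (2). It therefore remains, assuming (1) and (2), to show that equation (3) on $A_w$ is equivalent to condition (3) of Definition~\ref{def:fundbox}.

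Fix $i \in [m-1]$ and set $v_j := w^{-1}(i+(j-1)m)$, $v'_j := w^{-1}(i+1+(j-1)m)$ for $j \in [l]$. From (1) and (2) one has $v_1 < \cdots < v_l < v_1+n$, $v'_1 < \cdots < v'_l < v'_1+n$, and $v_j < v'_j$, so the ``upper bound'' clauses in $(3)_i$ (namely $v_{k+1} < v'_k + n$ for $k \in [1,l-1]$ and $v_1 < v'_l$) are automatic. Equation $(3)_i$ thereby reduces to the alternative
$$\exists\, k \in [1,l-1]\;\text{with}\; v'_k < v_{k+1}, \quad \text{or} \quad v'_l < v_1+n.$$
Writing $c_j := \lceil v_j/n \rceil$, $c'_j := \lceil v'_j/n \rceil$, $p := \#\{j : c_j = c_1\}$ and $p' := \#\{j : c'_j = c'_1\}$, Lemma~\ref{lemAMBC} applied to $w^{-1}w_0^\lambda$ (which has the required monotonicity by (1), (2)) yields the clean periodic formulas $P_{i,k} = v_{k+p} - c_1 n$ and $P_{i+1,k} = v'_{k+p'} - c'_1 n$, where the sequences are extended by $v_{j+l} = v_j + n$ and $v'_{j+l} = v'_j + n$. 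A direct count gives $\Diff_i(w^{-1}) = l(c'_1-c_1) + (p-p')$ and $\lch_i(P(w^{-1}w_0^\lambda)) = \delta_{\min} + (p-p')$, where $\delta_{\min}$ is the smallest integer $\delta$ such that $v'_{j+\delta} \ge v_j + (c'_1-c_1)n$ for all $j$.

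Since $v'_j > v_j$, the shift $\delta = l(c'_1-c_1)$ always works by periodicity, so $\delta_{\min} \le l(c'_1-c_1)$, i.e., $\lch_i \le \Diff_i$; equality holds iff the shift $\delta = l(c'_1-c_1) - 1$ fails, i.e., iff some $v'_{j-1} < v_j$ (using $v'_0 = v'_l - n$). Splitting the cases $j=1$ versus $j \in [2,l]$ recovers the displayed alternative, completing the equivalence. The main obstacle is this last step of comparing $\lch_i$ and $\Diff_i$, which requires careful bookkeeping of the block indices $c_1, c'_1$ and the split points $p, p'$, together with the observation that the ``upper'' inequalities in $(3)_i$ are automatically satisfied under (1) and (2).
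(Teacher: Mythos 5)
Your proof is correct and follows essentially the same route as the paper's: translate each alcove equation into an inequality on $w^{-1}$ via the standard dictionary, and then show that under equations (1) and (2) one always has $\lch_i \le \Diff_i$, with equality precisely when $(3)_i$ holds. The paper states the key implication (that failure of all $l$ clauses in $(3)_i$ forces $\lch_i < \Diff_i$, and conversely) without computation; you supply the bookkeeping it omits, giving explicit formulas $\Diff_i = l(c'_1-c_1)+(p-p')$ and $\lch_i = \delta_{\min}+(p-p')$ and locating the threshold at the shift $\delta = l(c'_1-c_1)-1$. This makes the argument self-contained but does not change the underlying idea.
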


\begin{proof}
	Since $$A_w=w(A_0)=\left\{\mu\mid \mu_{w(n)}<\mu_{w(n-1)}<\ldots<\mu_{w(1)}<\mu_{w(n)}+1\right\},$$
	equations (1) and (2) are equivalent to first two conditions in Definition \ref{def:fundbox}, and we show the third are the same as well.
	
Let $w\in \mfF$, and suppose on the contrary that $w$ does not satisfy equation (3). Then from the monotone conditions of $w^{-1}$, there exists some $i\in[m-1]$:
\begin{equation*}
\begin{split}
& w^{-1}(i+1)>w^{-1}(i+m),\\
& w^{-1}(i+1+m)>w^{-1}(i+2m),\\
& \ldots\\
& w^{-1}(i+1+(l-2)m)>w^{-1}(i+(l-1)m),\\
& w^{-1}(i+1+(l-1)m)>w^{-1}(i)+n.\\
\end{split}
\end{equation*}
But these inequalities imply $\lch_i(P(w^{-1}w_0^\lambda))<\Diff_i(w^{-1})$, which contradicts $w\in \mfF$.

Now for any $w$ satisfying all three sets of equations, we know $\Diff_i(w^{-1})\ge\lch_i(P(w^{-1}w_0^\lambda))$ for all $i\in[m-1]$ by (1) and (2). If for some $i$, $\Diff_i(w^{-1})\ge\lch_i(P(w^{-1}w_0^\lambda))+1$, then the $l$ inequalities above holds, which is contradictory to equation $(3)_i$.
\end{proof}

We now begin by understanding the points of $\widetilde{W}$ that are contained in $\mcF l_N$.

\begin{Lemma}
	$w\in \widetilde{W}\cap\mcF l_N$ $\Leftrightarrow$ the alcove $A_w$ satisfies equation $(1)$.
\end{Lemma}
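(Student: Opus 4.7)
The plan is to translate the containment $w\in\mcF l_N$ into a set of affine-root inequalities on the alcove $A_w$, and then match them with equation $(1)$ by a telescoping argument around the cycle of roots appearing in $N$.

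First I would unpack the condition: $w\in\mcF l_N$ means $N\in{}^w\text{Lie}(I)=\text{Lie}(wIw^{-1})$, and by the standard affine root space decomposition this algebra is $\mathfrak{h}(\mcO)\oplus\bigoplus\mathfrak{g}_{\tilde\alpha}$, summed over affine roots $\tilde\alpha=(\alpha,k)$ with $\langle\alpha,x\rangle+k\geq 0$ for all $x\in A_w$ (for $w=1$ and $A_w=A_0$ this recovers the usual description of $\text{Lie}(I)$). Hence the condition is that every affine root in the support of $N$ be non-negative on $A_w$. Reading off this support directly from the block form of $N$: the superdiagonal identity blocks contribute $(e_{i+jm}-e_{i+(j+1)m},0)$ for $i\in[m]$ and $j\in[0,l-2]$ with coefficient $1$, while the bottom-left $th$ block contributes $(e_{i+(l-1)m}-e_i,1)$ for $i\in[m]$ with coefficient $h_i\neq 0$ (using that $h$ has non-zero eigenvalues). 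Thus $w\in\mcF l_N$ is equivalent to
\begin{equation*}
\langle e_{i+jm}-e_{i+(j+1)m},x\rangle\geq 0\;\text{for }j\in[0,l-2],\quad \langle e_{i+(l-1)m}-e_i,x\rangle\geq -1,
\end{equation*}
holding for all $x\in A_w$ and $i\in[m]$.

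For the equivalence with equation $(1)$, I would use the telescoping identity $\langle e_{i+am}-e_{i+bm},x\rangle=\sum_{j=a}^{b-1}\langle e_{i+jm}-e_{i+(j+1)m},x\rangle$. The lower bound $\langle e_{i+am}-e_{i+bm},x\rangle\geq 0$ is then immediate as a sum of non-negative terms. For the upper bound, the affine root $(e_{i+bm}-e_{i+am},1)$ decomposes as a sum over the complementary part of the cycle,
\begin{equation*}
\sum_{j=b}^{l-2}(e_{i+jm}-e_{i+(j+1)m},0)\;+\;(e_{i+(l-1)m}-e_i,1)\;+\;\sum_{j=0}^{a-1}(e_{i+jm}-e_{i+(j+1)m},0),
\end{equation*}
which is again a sum of affine roots of $N$ and therefore non-negative on $A_w$, giving $\langle e_{i+am}-e_{i+bm},x\rangle\leq 1$. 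Conversely, taking $b=a+1$ for $a\in[0,l-2]$ in equation $(1)$ recovers the non-shifted affine-root conditions, and taking $a=0,b=l-1$ recovers $\langle e_{i+(l-1)m}-e_i,x\rangle\geq -1$, so both directions of the equivalence are established.

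The only real obstacle is careful bookkeeping: identifying the affine roots appearing in $N$ (in particular tracking the $+1$ shift produced by the $t$ factor in the bottom-left block) and arranging the cyclic telescoping identity in the correct direction. Everything else is a direct computation.
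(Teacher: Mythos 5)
Your proof is correct and follows essentially the same approach as the paper: translate $w\in\mcF l_N$ into the condition that every affine root in the support of $N$ is non-negative on $A_w$ (the paper phrases this as $w^{-1}(\alpha)$ being a positive affine root for each such $\alpha$), then identify this with equation $(1)$. The paper asserts the final translation to equation $(1)$ without detail; your telescoping argument, using both the direct chain $\sum_{j=a}^{b-1}(e_{i+jm}-e_{i+(j+1)m})$ for the lower bound and the complementary cycle through the $+\delta$ root for the upper bound, is exactly the bookkeeping the paper leaves implicit.
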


\begin{proof}
	$w\in \widetilde{W}\cap\mcF l_N$ $\Leftrightarrow$ ${}^w N\in \text{Lie}(I)$.
	Note that $N$ has a non-zero entry in the weight spaces $e_i-e_{i+m}$ for $i+m\leq ml$ and $e_{i+(l-1)m}-e_i+\delta$ for $i\leq m$.
	
	Thus from the above we have $w\in\mcF l_N\Leftrightarrow$ $w^{-1}(\alpha)$ is a positive root $\alpha\in R_{\aff}$ for the root spaces with non-zero entries in $N$. These conditions translate to the conditions
	$$0\leq \langle e_i-e_j, A_w\rangle\leq 1$$
	for $i\equiv j\;(\text{mod m})$ and we thus get the elements in $\widetilde{W}\cap\mcF l_N$ are exactly described by equation $(1)$.
\end{proof}

To understand the components better, we consider $\widetilde{P}$-orbits. These turn out to be a disjoint union of smooth open subsets of a number of components. To prove this statement we follow ideas of \cite{GKM}.

Before we start we introduce some torus actions on $\mcF l_N$. First consider $T\subset SL_{n}$ the diagonal torus. We can construct a subtorus 
$$S=\{\text{diag}(\mu_i)\mid\mu_i=\mu_j\text{ if }i\equiv j\,(\text{mod m})\}.$$
Since $S$ commutes with $N$, $S$ acts on $\mcF l_N$.

We also have an action of $\mathbb{G}_m$ on $\mcF l$ by loop rotations, i.e. by scaling the uniformizer $t$ of the algebra $\mathcal{O}$ of power series.

We can then consider the following cocharacter $\mu:\mathbb{G}_m\rightarrow T\times\mathbb{G}_m$ described by $\mu(x)=(\text{diag}(\mu_i(x)),\delta(x))$, such that $\mu_i=x^{-\left\lfloor\frac{i}{m}\right\rfloor}$ and $\delta(h)=x^{-l}$. We can check that this acts by scaling $N$ and thus acts on $\mcF l_N$.

Consider the Lie algebra $\widetilde{\mathfrak{p}}$ of $\widetilde{P}$. Then the action through $\mu$ gives a grading on $\widetilde{\mathfrak{p}}$, with non-negative weights. Denote the graded pieces by $\widetilde{\mathfrak{p}}_k$ and the filtered pieces $\widetilde{\mathfrak{p}}_{>k}=\oplus\widetilde{\mathfrak{p}}_k$. The $0$ graded part is the Levi of $\widetilde{\mathfrak{p}}$, which we denote by $\widetilde{\mathfrak{l}}\coloneqq\widetilde{\mathfrak{p}}_0$ and by $\widetilde{L}$ the corresponding Levi subgroup. Further note that the $S$ acts on $\widetilde{\mathfrak{l}}$ and there is a cocharacter of $S$ such that $\text{Lie}(I)\cap\widetilde{\mathfrak{l}}$ is exactly the non-negative weight spaces.

Before stating the following Lemma for $w$ satisfying equation $(2)$, we introduce the notation $Y_w^\circ=\widetilde{P}wI/I\cap \mcF l_N$ and $Y_w=\overline{\widetilde{P}wI/I\cap \mcF l_N}$. Note that if we consider $w$ satisfying equation $(2)$, we do indeed get all the $\widetilde{P}$-orbits.

\begin{Lemma}\label{fixpoints:P:orbits}
	If $Y_w$ is non-empty, then $W_Pw\cap\mcF l_N$ is non-empty.
\end{Lemma}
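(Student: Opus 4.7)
The plan is to produce a $T$-fixed point in $W_P w \cap \mcF l_N$ via a two-stage Bialynicki--Birula contraction of any point of $Y_w^\circ$, using two cocharacters that both preserve $\mcF l_N$: the cocharacter $\mu$ of $T\times\mathbb{G}_m$ constructed in the previous section, and a generic cocharacter $\nu$ of the subtorus $S$.

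First, since $Y_w=\overline{Y_w^\circ}$ is non-empty, I pick a point $p\in Y_w^\circ=\widetilde{P}wI/I\cap \mcF l_N$ and write $p=\widetilde{p}\cdot wI$ for some $\widetilde{p}\in\widetilde{P}$. I then apply the $\mu$-action. Since $\mu$ scales $N$, it preserves $\mcF l_N$. Moreover, one checks that $\mu(x)\cdot wI=wI$ for every $w\in\widetilde{W}$: the diagonal part of $\mu(x)$ lies in $T\subset I$, and for $w=ut^\lambda$ the loop rotation only introduces an extra factor of $x^{-l\lambda}\in T\subset I$, which is absorbed on the right. Hence $\mu(x)\cdot p=(\mu(x)\widetilde{p}\mu(x)^{-1})\cdot wI$. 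The non-negative $\mu$-grading $\widetilde{\mathfrak{p}}=\bigoplus_{k\ge 0}\widetilde{\mathfrak{p}}_k$ with zero-weight part $\widetilde{\mathfrak{p}}_0=\widetilde{\mathfrak{l}}$ then ensures that $\mu(x)\widetilde{p}\mu(x)^{-1}$ converges as $x\to 0$ to an element $\widetilde{p}_0\in\widetilde{L}$. So $p':=\lim_{x\to 0}\mu(x)\cdot p$ exists and lies in $\widetilde{L}\cdot wI$, and since $\mcF l_N$ is closed in $\mcF l$, we have $p'\in\widetilde{L}\cdot wI\cap\mcF l_N$.

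Second, I apply a generic cocharacter $\nu$ of $S$, which preserves $\mcF l_N$ as $S$ centralizes $N$. The orbit $\widetilde{L}\cdot wI\cong\widetilde{L}/(\widetilde{L}\cap wIw^{-1})$ is a finite-dimensional partial flag variety for the Levi $\widetilde{L}$ (choosing $w$ as a shortest coset representative in $W_Pw$), and its $T$-fixed points are precisely $W_Pw$. The crucial observation is that $S$ separates these: every Levi root $e_i-e_j$ has $i\ne j$ in the same block $\{(k-1)m+1,\ldots,km\}$ for some $k$, so $i\not\equiv j\;(\MD m)$, and therefore this root restricts non-trivially to $S$. Consequently a generic $\nu$ has all $T$-fixed points as its only fixed points on $\widetilde{L}\cdot wI$, and Bialynicki--Birula on this projective variety gives
\[
p'':=\lim_{x\to 0}\nu(x)\cdot p'\in W_Pw\cap\mcF l_N,
\]
as required.

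The main technical obstacle is justifying carefully that the $\mu$-limit of $p$ actually lands in the Levi orbit $\widetilde{L}\cdot wI$ rather than in a smaller $\widetilde{P}$-orbit in the Bruhat closure, and that $\mu$-limits exist on the potentially infinite-dimensional cell $\widetilde{P}wI/I$. Both are handled precisely by the non-negative grading of $\widetilde{\mathfrak{p}}$ with zero-weight part $\widetilde{\mathfrak{l}}$ together with $\widetilde{W}\subset\mcF l$ being fixed by the full $T\times\mathbb{G}_m$-action; the second stage is then a standard Bialynicki--Birula argument on a finite-dimensional partial flag variety.
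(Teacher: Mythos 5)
Your proof is correct and follows essentially the same approach as the paper: a two-stage cocharacter contraction, first by $\mu$ to land in the Levi orbit $\widetilde{L}\cdot wI$ (using the non-negative $\mu$-grading on $\widetilde{\mathfrak{p}}$), then by a cocharacter of $S$ to flow to a $T$-fixed point in $W_Pw$. The only minor variation is that you use a generic cocharacter of $S$, justified by the observation that every Levi root $e_i-e_j$ lies within a single block of size $m$ and hence restricts nontrivially to $S$, whereas the paper specifies a particular cocharacter whose non-negative weight space on $\widetilde{\mathfrak{l}}$ is $\widetilde{\mathfrak{l}}\cap\text{Lie}(I)$; these are equivalent.
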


\begin{proof}
	Consider the cocharacter $\mu:\mathbb{G}_m\rightarrow T\times\mathbb{G}_m$ defined above.
	
	Note that with respect to this cocharacter $\widetilde{P}$ has all non-negative root spaces. Thus we get $\widetilde{P}$-orbits are contracted by the action through $\lambda$ to the orbit of $\widetilde{L}$ on the points of $\widetilde{W}$.
	
	We thus must have if $\widetilde{P}$-orbit intersects $\mcF l_N$ then the $\widetilde{L}$-orbit on the points of $\widetilde{W}$ also intersects $\mcF l_N$. Note that $S$ acts on $\widetilde{L}$ and so acts on the $\widetilde{L}$-orbits. Then we can consider a cocharacter of $S$ such that the non-negative weights on $\widetilde{\mathfrak{l}}$ are exactly given by the intersection of $\widetilde{\mathfrak{l}}$ with $\text{Lie}(I)$. Under this action the $\widetilde{L}$-orbit on the points of $\widetilde{W}$ contracts to one of those points.
	
	We thus have that if $\widetilde{P}$-orbit intersects $\mcF l_N$, then it intersects it at a point of $\widetilde{W}$ of that orbit. Theses points are exactly as described in the statement of the lemma and thus the result follows.
\end{proof}

\begin{Corollary}
	All the non-empty $Y_w$ are given by $w$ satisfying equations $(1)$ and $(2)$ (i.e. $w\in w_0^\lambda{\cdot}R_{T^\lambda}$).
\end{Corollary}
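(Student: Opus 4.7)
The plan is to split the claim into two parts: (i) show $Y_w$ is non-empty if and only if $A_w$ satisfies equations $(1)$ and $(2)$, where $\widetilde{P}$-orbits are parameterized by those $w$ with $A_w$ satisfying equation $(2)$; and (ii) identify the set of such $w$ with $w_0^\lambda \cdot R_{T^\lambda}$.

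For (i), the direction $(\Leftarrow)$ is immediate from the preceding Lemma: if $A_w$ satisfies equation $(1)$, then $wI/I \in \widetilde{W} \cap \mcF l_N \subset Y_w$. For the converse, take $w$ satisfying equation $(2)$ with $Y_w \neq \emptyset$. Lemma \ref{fixpoints:P:orbits} yields some $u \in W_P$ with $uw \in \widetilde{W} \cap \mcF l_N$, so $A_{uw}$ satisfies equation $(1)$, and the task is to deduce the same for $A_w$. Since $u^{-1} \in W_P = (S_m)^l$ permutes within each value block, we may write $u^{-1}(jm + k) = jm + \sigma_j(k)$ for permutations $\sigma_j \in S_m$. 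Translating equation $(1)$ on $A_{uw}$ via the analysis in the proof of Proposition \ref{prop:A} yields, for each $i \in [m]$,
\begin{equation*}
w^{-1}(\sigma_0(i)) < w^{-1}(m + \sigma_1(i)) < \cdots < w^{-1}((l-1)m + \sigma_{l-1}(i)) < w^{-1}(\sigma_0(i)) + n,
\end{equation*}
while equation $(2)$ on $A_w$ asserts that $(w^{-1}(jm+1), \ldots, w^{-1}(jm+m))$ is already the sorted sequence at each level $j$.

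The crucial input is a sorting lemma: if $(x_i)_{i=1}^m$ and $(y_i)_{i=1}^m$ are tuples of distinct integers with $x_i < y_i$ for every $i$, then the sorted reorderings satisfy $x_{(k)} < y_{(k)}$ for every $k$. (Indeed, for the $k$ indices $i$ with $y_i \leq y_{(k)}$, we have $x_i < y_i \leq y_{(k)}$, so at least $k$ of the $x_i$ lie strictly below $y_{(k)}$.) Applying this lemma to each consecutive level transition $j \to j+1$, and to the wrap-around step from level $l-1$ back to level $0$ with a shift by $n$, gives $w^{-1}(jm + k) < w^{-1}((j+1)m + k)$ and $w^{-1}((l-1)m + k) < w^{-1}(k) + n$ for all $j$ and $k$. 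Hence $w^{-1}$ satisfies condition $(1)$ of $\mfF$, which by Proposition \ref{prop:A} is equation $(1)$ on $A_w$.

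For (ii), Proposition \ref{propinvAMBC} gives $R_{T^\lambda} = (L_{T^\lambda})^{-1}$, so $w \in w_0^\lambda \cdot R_{T^\lambda}$ if and only if $w^{-1} w_0^\lambda \in L_{T^\lambda}$; by Proposition \ref{propleftcell}, this holds if and only if $w^{-1} w_0^\lambda$ satisfies the two listed monotonicity conditions. A direct window-notation computation, using that $w_0^\lambda$ reverses each value block, transforms these into conditions $(1)$ and $(2)$ of $\mfF$ on $w^{-1}$, i.e.\ equations $(1)$ and $(2)$ on $A_w$. I expect the sorting step to be the main obstacle; the rest is a translation between combinatorial and geometric conditions.
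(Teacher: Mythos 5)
Your argument is correct and takes essentially the same route as the paper: both invoke Lemma \ref{fixpoints:P:orbits} to produce a $\widetilde W$-point $uw\in W_Pw\cap\mcF l_N$, and then transfer equation $(1)$ from $A_{uw}$ to the minimal coset representative $A_w$ via a sorting argument on the columns of the window. The only real difference is that you re-derive the sorting step directly, whereas the paper cites Lemma \ref{Porbit:combinatorial}, whose proof is exactly your sorting lemma; your part (ii) (matching the equations with $w\in w_0^\lambda\cdot R_{T^\lambda}$ via Proposition \ref{propleftcell} and an easy $w_0^\lambda$-computation) makes explicit what the paper leaves implicit in the parenthetical.
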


\begin{proof}
    Note that by the previous lemma $Y_w$ is non-empty if and only if some element in $W_Pw$ satisfies equation $(1)$.
    
    It follows from Lemma \ref{Porbit:combinatorial} that if $w'$ satisfies equation $(1)$ then there is an element in $W_Pw'$ satisfying equations $(1)$ and $(2)$.
\end{proof}

It follows from this Corollary and Proposition \ref{propleftcell} that 
$$\mcF l_N=\bigcup_{w\in w_0^\lambda{\cdot} R_{T^\lambda}} Y_w.$$
 
\begin{Lemma}\label{smooth:open}
	$Y_w^\circ$ is always smooth and equidimensional of dimension $\dim(\widetilde{P}/I)$, when it is non-empty.
\end{Lemma}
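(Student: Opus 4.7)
The plan is to decompose $Y_w^\circ$ using the Bialynicki-Birula stratification induced by the $\mathbb{G}_m$-action through the cocharacter $\mu$ introduced earlier, and to verify that each attracting cell is a smooth affine subspace of dimension $\dim(\widetilde{P}/I)$.

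First I would check that $Y_w^\circ$ is $\mu$-stable and that every point flows to a fixed point in $W_P w \cap \mcF l_N$ as $x \to 0$. Invariance of $\mcF l_N$ follows because $\mu$ scales $N$; invariance of $\widetilde{P}wI/I$ follows because $\mu$ lies in a torus normalizing $\widetilde{P}$. The non-negative grading on $\widetilde{\mathfrak{p}}$ (with zero part $\widetilde{\mathfrak{l}}$) contracts $\widetilde{P}$-orbits to $\widetilde{L}$-orbits as $x\to 0$, and composing with the $S$-cocharacter described in the previous section further contracts $\widetilde{L}$-orbits to the $\widetilde{W}$-points. Lemma \ref{fixpoints:P:orbits} ensures that the resulting fixed locus $W_P w \cap \mcF l_N$ is non-empty whenever $Y_w^\circ$ is.

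Second, I would apply Bialynicki-Birula to the smooth projective ambient variety $\widetilde{P}wI/I$ (a partial flag variety of the finite-dimensional Levi $\widetilde{L}$) to obtain a decomposition into affine attracting cells $X^+(v)$ indexed by the $\mu$-fixed points $v \in W_P w \cap \mcF l_N$. Restricting to the closed subscheme $Y_w^\circ \subset \widetilde{P}wI/I$ yields a decomposition $Y_w^\circ = \bigsqcup_v Y_w^\circ(v)$ with $Y_w^\circ(v) = X^+(v) \cap \mcF l_N$, each a locally closed subscheme of $X^+(v)$ cut out by the equations defining $\mcF l_N$ read off in the Bialynicki-Birula coordinates on $X^+(v)$.

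Third, at each fixed point $v$ I would identify $X^+(v)$ as the sum of positive-$\mu$-weight root spaces in $\mathfrak{g}(\mathcal{K})/v\,\mathrm{Lie}(I)\,v^{-1}$ and interpret the $\mcF l_N$ condition as a linear system in those coordinates, arising from the affine roots $e_i - e_{i+m}$ and $e_{i+(l-1)m} - e_i + \delta$ where $N$ has non-zero components. Using equations $(1)$ and $(2)$ characterizing $v$, one computes the ranks of these linear equations and exhibits $Y_w^\circ(v)$ as an affine subspace of $X^+(v)$ of dimension exactly $\dim(\widetilde{P}/I)$, smooth as an affine space.

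The main obstacle is the dimension count in the last step: one must enumerate the positive-$\mu$-weight root contributions at each fixed point $v$ and verify that the linear equations from $N$ have the correct rank to produce a subspace of dimension $\dim(\widetilde{P}/I)$, uniformly in $v$. The combinatorial control provided by equations $(1)$, $(2)$, and the explicit rectangular form of $N$ is essential for securing this uniformity.
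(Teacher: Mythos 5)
Your proposal takes a genuinely different route from the paper. The paper's proof is a direct tangent-space computation: $Y_w^\circ$ is realized as the zero locus of the section $gwI/I\mapsto \mathrm{ad}(N)$ of the vector bundle with fibers $u_{\widetilde{\mathfrak{p}}}/u_{\widetilde{\mathfrak{p}}}\cap{}^{w^{-1}g^{-1}}\mathrm{Lie}(I)$ over the $\widetilde{P}$-orbit, the map $\mathrm{ad}(N):\widetilde{\mathfrak{p}}/\widetilde{\mathfrak{p}}\cap{}^{w^{-1}g^{-1}}\mathrm{Lie}(I)\to u_{\widetilde{\mathfrak{p}}}/u_{\widetilde{\mathfrak{p}}}\cap{}^{w^{-1}g^{-1}}\mathrm{Lie}(I)$ is shown to be surjective at every point, and smoothness and equidimensionality follow at once, with the dimension identified as $\dim(\widetilde{\mathfrak{l}}/\widetilde{\mathfrak{l}}\cap{}^{w^{-1}}\mathrm{Lie}(I))=\dim(\widetilde{P}/I)$. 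You instead try to reconstruct a Hessenberg-paving argument in the style of GKM via Bialynicki--Birula attracting cells.

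There are two genuine gaps in your proposal. First, a decomposition of $Y_w^\circ$ into affine attracting cells of equal dimension does not imply that $Y_w^\circ$ is smooth: smoothness is a local condition, and the closures of cells can meet non-transversally (Bialynicki--Birula takes smoothness of the ambient variety as a hypothesis and produces a cell decomposition; the converse implication is false). So even if your steps two and three went through, you would have proved equidimensionality and an affine paving, but not smoothness, which is the heart of the lemma and exactly what the paper's tangent-space argument delivers for free. Second, the assertion in your third step that the condition ${}^gN\in\mathrm{Lie}(I)$ becomes a \emph{linear} system in Bialynicki--Birula coordinates on $X^+(v)$ is left unjustified and is not automatic: writing $g=\exp(X)$, the conjugate $\mathrm{Ad}(g)(N)=N+[X,N]+\tfrac12[X,[X,N]]+\cdots$ has higher-order terms, and controlling them (or circumventing them via the iterated affine fibration of GKM, as the paper does in the proof of Lemma \ref{connected:p:orbit}) is precisely the hard content. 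Likewise the claimed ``uniform in $v$'' rank computation is asserted rather than carried out, whereas the paper's argument identifies the tangent-space dimension once and for all as the dimension of a Borel quotient of the Levi $\widetilde{\mathfrak{l}}$, with no case analysis over $v\in W_Pw$.
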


\begin{proof}
    The following proof follows \cite{GKM}.
    
	We want to understand the tangent spaces to $Y_w^\circ$ for $w$ satisfying equation $(1)$.
	
	Note that the $\widetilde{P}$-orbit at $w$ is isomorphic to 
	$$\widetilde{P}/\widetilde{P}\cap{}^{w^{-1}}I.$$
	We have the tangent bundle on this space given by $\widetilde{\mathfrak{p}}/\widetilde{\mathfrak{p}}\cap{}^{w^{-1}g^{-1}}\text{Lie}(I)$ at the point $gwI/I$.
	
	Over the intersection with $\mcF l_N$, we have the map $$\text{ad}(N):\widetilde{\mathfrak{p}}/\widetilde{\mathfrak{p}}\cap{}^{w^{-1}g^{-1}}\text{Lie}(I)\rightarrow\widetilde{\mathfrak{p}}/\widetilde{\mathfrak{p}}\cap{}^{w^{-1}g^{-1}}\text{Lie}(I)$$
	given by the adjoint action. Note that the image is in the nilpotent radical $u_{\widetilde{\mathfrak{p}}}$ of $\widetilde{\mathfrak{p}}$. In fact the map
	$$\text{ad}(N):\widetilde{\mathfrak{p}}/\widetilde{\mathfrak{p}}\cap{}^{w^{-1}g^{-1}}\text{Lie}(I)\rightarrow u_{\widetilde{\mathfrak{p}}}/u_{\widetilde{\mathfrak{p}}}\cap{}^{w^{-1}g^{-1}}\text{Lie}(I)$$
	is surjective.
	
	The tangent space at a point in $Y_w^\circ$ is given by the kernel of the above map.
	It follows, as this map is surjective, that the dimension of all tangent spaces is the same and thus the intersection is smooth.
	
	To compute the dimension of each component, we just need to compute the tangent space at any point, which is given by the kernel of the above map of vector bundles. But note that the dimension of this is just the codimension of the above vector spaces, thus the dimension of the tangent space is the codimension of $\widetilde{\mathfrak{p}}/\widetilde{\mathfrak{p}}\cap{}^{w^{-1}}\text{Lie}(I)$ and  $u_{\widetilde{\mathfrak{p}}}/u_{\widetilde{\mathfrak{p}}}\cap{}^{w^{-1}}\text{Lie}(I)$.
	This is exactly given by the dimension of $\widetilde{\mathfrak{l}}/\widetilde{\mathfrak{l}}\cap{}^{w^{-1}}\text{Lie}(I)$, where $\widetilde{\mathfrak{l}}$ is the Lie algebra of the Levi subgroup $\widetilde{L}$.
	
	The intersection $\widetilde{\mathfrak{l}}\cap{}^{w^{-1}}\text{Lie}(I)$ always gives a Borel subalgebra of $\widetilde{\mathfrak{l}}$ and hence the above space is always of dimension $\dim(\widetilde{P}/I)$ as required.
\end{proof}

It follows from Lemma \ref{smooth:open} that the intersection with $\widetilde{P}$-orbits give disjoint smooth open sets of components. To understand the components we want to identify precisely when the intersection is irreducible. We state the exact conditions in the following lemma, but first we introduce some geometric spaces known as \textit{Hessenberg varieties}.

Consider $G$ a reductive group and $B$ a Borel subgroup. Further consider a $G$-representation $V$ and a $B$ stable subrepresentation $W\subset V$ (this notation should not cause any confusion with the notation for the Weyl group). Further consider a vector $v\in V$. Then we can construct the Hessenberg variety
$$H_v^W:=\{gB\in G/B\mid v\in gW\subset V\}.$$

With this we can state the lemma.

\begin{Lemma}\label{connected:p:orbit}
	 For $A_w$ satisfying equations $(1)$ and $(2)$, $Y_w$ is irreducible if and only if $w\in \mathfrak{F}$, i.e. also satisfies equations $(3)$.
\end{Lemma}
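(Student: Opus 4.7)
The approach is to realise $Y_w^\circ$ as an affine fibration over a finite-dimensional Hessenberg variety, and then count irreducible components via a contracting torus action.

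Using the Levi decomposition $\widetilde{P}=U_P\rtimes\widetilde{L}$ together with the tangent space description from the proof of Lemma \ref{smooth:open}, the Levi projection yields a smooth surjection
\[
\pi\colon Y_w^\circ\twoheadrightarrow H_w\;\subset\;\widetilde{L}\bigl/\bigl(\widetilde{L}\cap wIw^{-1}\bigr)\;\cong\;\prod_{j=0}^{l-1}\mathrm{Fl}_m
\]
with affine fibres (coming from the $U_P$-direction). Here $H_w$ is the Hessenberg-type subvariety whose points are tuples of complete flags $(F^\bullet_j)_{j=0}^{l-1}$ in the windows $W_j=wV_{(j+1)m}/wV_{jm}$ compatible with the induced maps $\bar N_j\colon W_j\to W_{j+1}$ as dictated by the alcove $A_w$. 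Since $Y_w^\circ$ and $H_w$ are both smooth equidimensional (by Lemma \ref{smooth:open}) and the fibres of $\pi$ are contractible, $Y_w$ is irreducible if and only if $H_w$ is connected.

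To count components of $H_w$, apply the cocharacter $\mu$ of Lemma \ref{fixpoints:P:orbits} together with a regular cocharacter $\chi\colon\mathbb{G}_m\to S$. This produces a contracting $\mathbb{G}_m$-action on $H_w$ whose fixed set is $W_Pw\cap\mcF l_N$. The Bialynicki--Birula decomposition $H_w=\bigsqcup_v H_w^+(v)$ expresses $H_w$ as a disjoint union of smooth attractor cells, and its irreducible components are the closures of the top-dimensional ones, i.e.\ those fixed points $v$ at which every $\chi$-weight of $T_vH_w$ is positive. The final combinatorial step is to match these top-dimensional sources with $\mathfrak F$: decomposing $T_vH_w$ into affine root spaces in $\ker(\operatorname{ad}N)\cap\widetilde{\mathfrak l}$ and reading the $\chi$-sign of each root at $v=uw$ converts positivity into a finite list of alcove inequalities on $A_v$, which by a direct inspection (using Proposition \ref{prop:A}) are exactly equations $(1)$, $(2)$, and all $(3)_i$ for $i\in[m-1]$. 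In particular $v=w$ is always a source; it is the unique one when $w\in\mathfrak F$, so $H_w$ is connected and $Y_w$ is irreducible. Conversely, if some $(3)_i$ fails at $w$, the factorisation in Proposition \ref{Prop:Fi} and Corollary \ref{Cor:Fi} supplies explicit nontrivial $u\in W_P$ with $uw$ another top-dimensional source, creating additional irreducible components of $Y_w$.

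\textbf{Main obstacle.} The technical crux is the tangent weight analysis. The fixed set $W_Pw\cap\mcF l_N$ can be substantially larger than the set of sources, so isolating precisely those $v$ whose attractor cell has top dimension requires a root-by-root sign check. Matching the resulting positivity conditions to the alcove equations $(3)_i$ is where the combinatorics of Section 3 must be invoked carefully: the inductive decomposition of Corollary \ref{Cor:Fi} and the length additivity $\ell(w w_0^\lambda w')=\ell(w)+\ell(w_0^\lambda)+\ell(w')$ of Proposition \ref{proprcellstr} organise the extra sources produced when $(3)_i$ fails and show that each failure genuinely contributes a new component rather than being absorbed into the existing one.
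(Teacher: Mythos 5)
Your reduction to connectedness of a Hessenberg variety $H_w \subset \widetilde{P}/I$ with the image of $Y_w^\circ$ an affine fibration over it matches the paper's opening move (via the filtration $\widetilde{\mathfrak p}_{>k}$ and the results of \cite{GKM}). After that the routes diverge, and yours has a substantive gap.

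The characterization of sources you assert is internally inconsistent. You claim that positivity of all $\chi$-weights at a fixed point $v = uw$ ($u\in W_P$) translates into $A_v$ satisfying equations $(1)$, $(2)$, and all $(3)_i$, i.e.\ $v\in\mathfrak F$. But equation $(2)$ is exactly the within-window sorting condition, and $W_P \cong (S_m)^l$ acts precisely by permuting within windows; so $v=w$ is the \emph{unique} element of $W_P w$ satisfying $(2)$. Under your claimed criterion, the set of sources would be $\{w\}$ if $w\in\mathfrak F$ and empty if $w\notin\mathfrak F$. The latter is impossible: a nonempty smooth complete variety with a contracting $\mathbb G_m$-action always has at least one source. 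This contradicts your "Conversely" paragraph, which asserts that when $(3)_i$ fails, Proposition \ref{Prop:Fi} and Corollary \ref{Cor:Fi} exhibit \emph{additional} sources — a statement incompatible with the characterization you just gave. The Bialynicki--Birula framework is sound as far as it goes (components of the smooth equidimensional $H_w$ biject with sources), but the root-by-root sign check that would actually identify the sources is precisely the content missing, and the heuristic matching to equations $(1)$--$(3)$ cannot be right as stated.

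For comparison, the paper's proof sidesteps the tangent-weight bookkeeping entirely. It uses the explicit identification $\widetilde{P}/I\cong (SL_m/B)^l$ with $V=\oplus\Hom(V^i,V^{i+1})$, composes the maps to produce projections $H^{W_w}_N\to H_h^W\subset SL_m/B$ onto a Hessenberg variety for a single regular semisimple $h\in\mathfrak{sl}_m$, and cites the type-$(1^n)$ connectivity criterion from \cite{boixeda2019fix,Boixedathesis} to get necessity of $(3)$. Sufficiency is established constructively: Lemma \ref{connected:fibers} shows the fibers over $S_m$-fixed points are connected, and explicit $\mathbb P^1$'s built from $SL_2^{\alpha_i}\hookrightarrow SL_m$ link the diagonal fixed points $(w)$ and $(ws_i)$ whenever $(3)_i$ holds. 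If you want to pursue your approach, you would need to (i) actually compute the $\chi$-weights on $T_v H_w$ at each fixed point $v\in W_P w$, and (ii) establish a \emph{correct} (and necessarily different) criterion for which $v$ are sources; Proposition \ref{prop:A} alone will not deliver this, since it only translates membership in $\mathfrak F$ into alcove form and says nothing about tangent weights at arbitrary $v=uw$.
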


\begin{proof}
	Consider the filtration on $\widetilde{P}$ induced by the filtration $\widetilde{\mathfrak{p}}_{>k}$, which we denote by $\widetilde{P}_{>k}$. We can use this filtration to construct the following quotients $$\widetilde{P}_{>k}\backslash\widetilde{P}/\widetilde{P}\cap{}^{w^{-1}}I$$ factoring the map $\widetilde{P}/\widetilde{P}\cap{}^{w^{-1}}I\rightarrow \widetilde{P}/I$.
	
	This induces a similar sequence of maps on the intersection $\widetilde{P}wI/I\cap\mcF l_N$. Each of the sequence of maps induced on this space is an affine space bundle over its image. This follows from the results from \cite{GKM}.
	
	Further from \cite{GKM} we also get explicitly the image on $\widetilde{P}/I$. This image is given by a Hessenberg variety. Namely consider the representation $V=\widetilde{\mathfrak{p}}_1$, which is a $\widetilde{\mathfrak{l}}$ and also a $\widetilde{L}$-representation. Note that $N\in\widetilde{\mathfrak{p}}_1$ and consider $W_w:={}^{w^{-1}}\text{Lie}(I)\cap\widetilde{\mathfrak{p}}_1$ for $w$ satisfying equations $(1)$ and $(2)$. This is stable under the action of the Borel $B_{\widetilde{L}}\subset\widetilde{L}$ given by the image of $I$.
	
    As proven in \cite{GKM}, the image is given by the Hessenberg variety $H^{W_w}_N$.
    Note further it follows from \cite{GKM} or from the above descriptions that $H^{W_w}_N$ is smooth. It thus follows that to prove irreducibility of the $\widetilde{P}$-orbit and hence of $Y_w$ it is equivalent to proving irreducibility of the Hessenberg variety of $H^{W_w}_N$ and hence on the connectedness of this Hessenberg variety.
	
	Using the action by a cocharacter of $S$ on this Hessenberg variety we get that $B_{\widetilde{L}}$-orbits are the attracting sets and that to prove connectedness we just need to prove that all the points in $W_P$ contained in this Hessenberg variety are indeed in the same connected component.
	
	We now give a more explicit description of the flag variety $\widetilde{P}/I$ and of the above Hessenberg variety. Note that the flag variety $\widetilde{P}/I$ is just given by 
	$(SL_m/B)^l$, so it is just given by complete flags of $l$ distinct $m$-dimensional vector spaces. Let us denote them by $V^i$ for $i=1,\ldots, l$.
	
	The representation $V$ of $\widetilde{L}$ can be described as the set $\oplus \Hom(V^i,V^{i+1})$, where we interpret $i+1\;(\text{mod l})$. With this description and an appropriate choice of basis we can describe $N\in V$ as the identity map between $V^i\rightarrow V^{i+1}$ for $i=1,\ldots ,l-1$ and a diagonal map with distinct eigenvalues for the map $V^l\rightarrow V^1$.
	
	With this construction it is clear that we can compose all the maps $\Hom(V^i,V^{i+1})$ to get a map $\Hom(V^k,V^k)$. We thus get several maps from the Hessenberg variety $H^{W_w}_N$ to some Hessenberg variety $H_h^W$ on $SL_m/B$ with the representation of $\text{End}(V^k)$ and the regular semisimple endomorphism $h$, introduced above, as the vector $v$ and some $B$ stable subrepresentation $W$. These maps are given by projecting to the $k$-th factor of $(SL_m/B)^l$. It is easy to see that all the points of the Weyl group $S_m$ are in the image of this map.
	
	These Hessenberg varieties $H_h$ are studied in \cite{boixeda2019fix,Boixedathesis}. It is proven there that for a $B$ stable subrepresentation $\text{Lie}(B)\subset W\subset \text{End}(V^k)$ the Hessenberg variety is connected if and only if all negative finite simple roots are contained in $W$. If $H^{W_w}_N$ is connected, then the image Hessenberg variety $H^W_h$ must be connected as well. The condition for $W$ to satisfy that $H^W_h$ is connected, is equivalent to equation $(3)$ for $w$.
	
	Thus $w\in\mathfrak{F}$ is necessary and it remains to prove it is sufficient.
	
	We thus need to prove that the points in $W_P=(S_m)^l$ appearing in the Hessenberg variety are all on the same component. To do this, we break it up in two steps. We prove in the following Lemma \ref{connected:fibers} that the fibers over $S_m$ of the map to $SL_m/B$ above are connected.
	
	Note that the diagonal $S_m\hookrightarrow (S_m)^l$ is always included in the Hessenberg variety and by Lemma \ref{connected:fibers} we have that every point of $W_P$ in our Hessenberg variety is in the same connected component as one of the diagonal ones. It remains to show that the diagonal ones are in the same connected component. To do this it is enough to proof that $(w)$ and $(ws_i)$ for a simple reflection $s_i$ are in the same component. Without loss of generality we can assume $w=id$.
	
	Consider the $SL_2^{\alpha_i}\hookrightarrow SL_m$ the subgroup corresponding to the simple root $\alpha_i$. Let $g\in SL_2^{\alpha_i}$. We can consider a subspace given by points $(x_i)\in (SL_m/B)^l$ where $x_i=gB$ or $x_i=hgB$. Note that the flags $(G_i)$ corresponding to $gB$ and $hgB$ are given by $G_j=\langle e_1,\ldots, e_j\rangle$ for $j\neq i$. Thus to check whether such a point lies in the space we only need to check the conditions for the $i$th vector space in each flag.
	
	Note that $hG_i\subset G_{i+1}$ and $hG_{i-1}\subset G_i$. Thus the tuple $(x_k)$ satisfies the conditions regardless of the choice $gB$ or $hgB$ for $x_k$, unless one of the Hessenberg conditions is $NG_i^k\subset G_i^{k+1}$. If we have this condition we must have $x_k=x_{k+1}$ in the case $k\neq l$ and $x_l=gB$ $x_1=hgB$ in the case $k=l$.
	
	Equation $(3)_i$ is equivalent to the condition $NG_i^k\subset G_i^{k+1}$ not being required for every $k$. Under those conditions it is easy to see that we can indeed choose $(x_i)$ satisfying all Hessenberg conditions for any $g\in SL_2^{\alpha_i}$. This gives a $\mathbb{P}^1$ inside the Hessenberg variety containing $(id)$ and $(s_i)$. It follows these two are in the same connected component and the result follows.
	\end{proof}
	
\begin{Lemma}\label{connected:fibers}
    The fibers of the map of Hessenberg varieties $H^{W_w}_N\rightarrow H_h$ at $S_m$ are connected, when $w$ satisfies equation $(1)$ and $(2)$.
\end{Lemma}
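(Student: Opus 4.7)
The plan is to present $F_\sigma$ as an iterated partial flag bundle and then to establish connectedness of the closing Hessenberg condition by a diagonal anchoring argument. First I would decompose the Hessenberg space as $W_w=\bigoplus_{i=1}^l W^i_w$ matching the block decomposition $\widetilde{\mathfrak{p}}_1=\bigoplus_i \Hom(V^i,V^{i+1})$; equations $(1)$ and $(2)$ ensure that each $W^i_w$ is $B_{\widetilde L}$-stable and lower-triangular with respect to the coordinate flags, so that the condition $N\in gW_w$ separates into one Hessenberg inclusion per adjacent pair $(V^i,V^{i+1})$.

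Next, fix $V^k=\sigma V^0$ and, for $0\le r\le l-1$, set $F^{(r)}$ to be the variety of partial tuples $(V^k,V^{k+1},\ldots,V^{k+r})$ (indices read modulo $l$) satisfying the first $r$ Hessenberg conditions, without yet closing up the cycle at $V^k$. The forgetful map $F^{(r)}\to F^{(r-1)}$ is a Zariski-locally trivial bundle whose fibre over $V^{k+r-1}$ is the variety of complete flags $V^{k+r}$ containing the prescribed subspaces dictated by $W^{k+r-1}_w$, which is a partial flag variety, hence irreducible and connected. Iterating from $F^{(0)}=\{\sigma V^0\}$ therefore shows that $F^{(l-1)}$ is irreducible and connected.

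The true fibre $F_\sigma$ is the closed subvariety of $F^{(l-1)}$ carved out by the single remaining Hessenberg condition linking $V^{k-1}$ back to $V^k=\sigma V^0$ (the ``$h$-step'' if $k=1$ and an ``identity-step'' otherwise). To prove $F_\sigma$ is connected I would anchor everything to the diagonal point $V^1=V^2=\ldots=V^l=\sigma V^0$, which always lies in $F_\sigma$: identity preserves any flag, and because $\sigma\in H_h$ the $h$-step condition is automatic. Using the cocharacter of $S$ introduced before Lemma \ref{fixpoints:P:orbits}, every point of $F_\sigma$ is retracted onto the attracting cell of some $S_m^{l-1}$-fixed point of $F_\sigma$, and the problem reduces to connecting such fixed points by chains of $\mathbb{P}^1$'s inside $F_\sigma$.

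The main obstacle is exactly this last step. Unlike the proof of Lemma \ref{connected:p:orbit}, where equation $(3)$ was needed to supply enough $\mathbb{P}^1$'s inside the full Hessenberg variety, here the coupling that forced equation $(3)$ — namely the cyclic constraint at $V^k$ — is absorbed into the fixed datum $V^k=\sigma V^0$. I expect the $\mathbb{P}^1$ joining a fixed point $(V^{k+1},\ldots,V^{k-1})$ to $(V^{k+1},\ldots,s_jV^{k+r},\ldots,V^{k-1})$ to be produced by the same $SL_2^{\alpha_j}$-argument as in Lemma \ref{connected:p:orbit}, applied locally at the $V^{k+r}$-coordinate: equations $(1)$ and $(2)$ provide the required slack at every step except possibly the closing one, and at the closing step the rigidity of $V^k$ makes the compatibility automatic.
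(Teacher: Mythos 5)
Your set-up is broadly reasonable up through paragraph three: the block decomposition of $W_w$, the recognition that the only non-trivial issue is the closing Hessenberg constraint, and the strategy of retracting onto $(S_m)^l$-fixed points and connecting them by $SL_2$-produced $\mathbb{P}^1$'s are all in the spirit of the paper's argument. The iterated bundle/Schubert-variety argument in your second paragraph is not actually needed, but it is at least not wrong.

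The genuine gap is in your final paragraph, and it is exactly where the argument has to be earned. You propose to join fixed points by flipping a \emph{single} coordinate $V^{k+r}\mapsto s_j V^{k+r}$ via $SL_2^{\alpha_j}$. But a single coordinate sits inside two Hessenberg conditions (the one from $V^{k+r-1}$ to $V^{k+r}$ and the one from $V^{k+r}$ to $V^{k+r+1}$), and moving $V^{k+r}$ along an $SL_2^{\alpha_j}$-orbit can raise the relative position on one side even as it lowers it on the other; there is no reason the whole $\mathbb{P}^1$ stays inside the Hessenberg variety. Your appeal to ``equations (1) and (2) provide the required slack'' is an assertion, not an argument. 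The paper avoids this problem by flipping a \emph{consecutive range} of coordinates $w_i\mapsto s_\alpha w_i$ for $k_1\le i\le k_2$ by a single $g\in SL_2^\alpha$: all the interior relative positions $w_i^{-1}w_{i+1}$ are unchanged and only the two boundary positions need to be controlled. The mechanism that chooses a good range and a good root $\alpha$ is a sign-string argument: record the sign of $w_i^{-1}(\alpha)$ along the cycle; because the anchoring coordinate is the identity, the string of signs begins and ends with $+$, so one may pick a maximal consecutive run of $-$'s, and flipping exactly that run strictly decreases $\sum_i\ell(w_i^{-1}w_{i+1})$ while keeping every relative position weakly smaller in Bruhat order, hence inside the Hessenberg variety. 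This length-decreasing induction, together with the observation that no $-$'s for any $\alpha$ forces all $w_i = \mathrm{id}$, is the content of the proof and is entirely absent from your proposal.
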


\begin{proof}
    Without loss of generality we assume we are projecting to the last copy of $SL_m/B$ in $(SL_m/B)^l$.
    
    We give certain conditions under which two elements of $(S_m)^l$ in the fiber of the above map are in the same connected component of the Hessenberg variety $H^{W_w}_N$. We will then use this construction to prove every point of $S_m^l$ appearing in the Hessenberg variety in a fixed fiber is in the same connected component. It follows from the discussion in the proof of Lemma \ref{connected:p:orbit} that this proves the fibers of the map are indeed connected.
    
    Consider the subgroup $SL_2^\alpha\hookrightarrow SL_m$ corresponding to the positive root $\alpha$.
    
    Consider two points $(w_i),(w'_i)\in (S_m)^l\cap H^{W_w}_N$ that satisfy
    $$w'_i=\begin{cases} s_\alpha w_i,& \text{ if }k_1\leq i\leq k_2\\
    w_i,&\text{ otherwise}
    \end{cases}$$
    for some $k_1<k_2$. To see that these two points lie in the same connected component consider the subspace of $(SL_m/B)^l$ given by coordinates $(x_i)$ with
    $$x_i=\begin{cases} g w_iB/B,& \text{ if }k_1\leq i\leq k_2\\
    w_iB/B,&\text{ otherwise}
    \end{cases}$$
    for $g\in SL_2^\alpha$.
    
    We check this is in the Hessenberg variety $H^{W_w}_N$. The only conditions that need to be checked are in the boundry cases $k_i$. Note here that the condition at $k_1$ only depends on the flags of $w_{k_1}$ and $gw_{k_1+1}$ and the condition for $N$ at $i\neq l$ only depends on the relative position, i.e. it only depends on the Schubert cell in which the flag corresponding to $w_{k_1}^{-1}gw_{k_1+1}$ lies.
    
    Note that the action of the torus $S$ makes the set of flags given by $w_{k_1}^{-1}gw_{k_1+1}$ an $S$-stable $\mathbb{P}^1$ with fixed points given by $w_{k_1}^{-1}w_{k_1+1}$ and $w_{k_1}^{-1}s_\alpha w_{k_1+1}$. The Schubert cells are just attracting sets along some cocharacter of $S$. It follows that the relative position of $w_{k_1}^{-1}gw_{k_1+1}$ with the identity is given by either $w_{k_1}^{-1}w_{k_1+1}$ or $w_{k_1}^{-1}s_\alpha w_{k_1+1}$. By assumption both of these do satisfy the conditions of the Hessenberg variety and it thus follows that the above subspace given by $(x_i)$ lie in the Hessenberg variety. This is a connected subvariety containing both $(w_i)$ and $(w'_i)$, hence these two points are in the same connected component.
    
    Now we check that using the above condition, everything can be shown to be in the same connected component. To do this we will consider without loss of generality the fiber at the identity of $S_m$. Then we show every point is in the same connected component as the diagonal identity fixed point.
    
    To do this consider a point $(w_i)\in (S_m)^l\cap H^{W_w}_N$. We prove that if $(w_i)\neq(id)$ there is a point $(w'_i)\in (S_m)^l\cap H^{W_w}_N$ satisfying the conditions above for some positive root $\alpha$ and some $k_1<k_2$ such that the length of $(w'_i)^{-1}w'_{i+1}$ is at most the length of $w_i^{-1}w_{i+1}$ and at least one of them is strictly smaller, where here we consider $i+1$ $(\text{mod } l)$. The result follows by induction as $w_l=id$ by assumption.
    
    Note that after multiplying by $s_\alpha$ we get $(w'_i)^{-1}w_{i+1}'{=}w_i^{-1}w_{i+1}$ or $(w'_i)^{-1}w_{i+1}'{=}w_i^{-1}s_\alpha w_{i+1}$. We will want to check what the condition is for $w_i^{-1}s_\alpha w_{i+1}$ having smaller length than $w_i^{-1} w_{i+1}$. This can be checked to happen exactly when one of $w_i^{-1}(\alpha)$ and $w_{i+1}^{-1}(\alpha)$ is a positive root and the other is negative.
    
    Note that $w_l^{-1}(\alpha)$ is positive, so writing the sign of $w_i^{-1}(\alpha)$ in a string, we have a sequence of $+$ and $-$ starting and ending with $+$. We can then choose a consecutive substring of all $-$'s and multiply those elements by $s_\alpha$ to get $(w'_i)$. By the above description this reduces the length of some products $w_i^{-1}w_{i+1}$ and leaves others unchanged.
    
    The only remaining thing to check is that $(w'_i)$ is indeed in the Hessenberg variety $H^{W_w}_N$. The conditions of the Hessenberg variety are given by conditions on $\Hom(V^k,V^{k+1})$. These conditions only depend on $w_i^{-1}w_{i+1}$. Further the conditions are given by some bounds in Bruhat order of $w_i^{-1}w_{i+1}$, but by construction $(w'_i)^{-1}w_{i+1}'$ is smaller in Bruhat order. It follows that this point also satisfies the conditions of the Hessenberg variety.
    
    To finish note that if there is no $-$'s string in the sequence constructed above for any positive root $\alpha$, then we must have $w_i=id$ $\forall i$ and thus the the connectedness follows.
\end{proof}

We now are ready to give a formula for the relative position of $Y_w$ for $w\in\mathfrak{F}$ to any $Y_{w'}$. To introduce this we recall the notation $w_0^\lambda=w_P\in W_P$ for the longest element in the finite parabolic group $W_P$.

\begin{Theorem}\label{lem:relativepositionforfundbox}
	If $w\in\mathfrak{F}$ and $w'$ satisfies equations $(1)$ and $(2)$ (i.e. $w\in w_0^\lambda{\cdot}R_{T^\lambda}$), then the relative position between $Y_w$ and any component of $Y_{w'}$ is given by $w^{-1}w_Pw'$. 
\end{Theorem}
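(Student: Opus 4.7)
The plan is to reduce to computing the generic relative position on the dense open subset $Y_w^\circ\times C$, where $C$ is any irreducible component of $Y_{w'}^\circ$ and $\overline{C}$ is the corresponding component of $Y_{w'}$. Since $w\in\mathfrak{F}$, the variety $Y_w$ is irreducible by Lemma \ref{connected:p:orbit}, hence so is $Y_w^\circ$. As $Y_w^\circ\subseteq\widetilde{P}wI/I$ and $C\subseteq\widetilde{P}w'I/I$, every pair in $Y_w^\circ\times C$ has the form $(pwI/I,\,qw'I/I)$ for some $p,q\in\widetilde{P}$, and its relative position is the unique $v\in\widetilde{W}$ with $w^{-1}p^{-1}qw'\in IvI$.

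The core is now a Bruhat-theoretic computation. Let $r=p^{-1}q\in\widetilde{P}$, which ranges freely over $\widetilde{P}$ as $(p,q)$ vary. Decomposing $\widetilde{P}=\bigsqcup_{u\in W_P}IuI$, the unique open dense stratum is $Iw_PI$. For $r\in Iw_PI$, one has $w^{-1}rw'\in Iw^{-1}I\cdot Iw_PI\cdot Iw'I=I(w^{-1}w_Pw')I$ provided the length-additivity
\[
\ell(w^{-1}w_Pw')=\ell(w^{-1})+\ell(w_P)+\ell(w')
\]
holds, which is exactly the content of Proposition \ref{proprcellstr} applied to $w^{-1}\in\mathfrak{F}^{-1}$ and $w'$ (the hypothesis $w_0^\lambda w'\in R_{T^\lambda}$ being equivalent to $w'\in w_0^\lambda{\cdot}R_{T^\lambda}$, i.e., to $w'$ satisfying equations $(1)$ and $(2)$). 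Consequently, on the ambient product $\widetilde{P}wI/I\times\widetilde{P}w'I/I$, the generic relative position is $w^{-1}w_Pw'$.

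The main obstacle is to verify that this generic value is attained on the smaller piece $Y_w^\circ\times C$ itself, rather than only on the ambient product. To this end, pick any $q_0\in\widetilde{P}$ with $q_0w'I/I\in C$, which exists since $C$ is non-empty. It suffices to produce $p\in\widetilde{P}$ simultaneously satisfying the two open conditions $pwI/I\in Y_w^\circ$ and $p^{-1}q_0\in Iw_PI$. The first is a non-empty open condition on $\widetilde{P}$ because $Y_w^\circ$ is non-empty, and the second is a non-empty open condition as a translate of the open Bruhat cell $Iw_PI$. Passing to a suitable finite-dimensional quotient of $\widetilde{P}$ through which both conditions factor, these become two non-empty open subsets of a connected, irreducible algebraic variety and therefore intersect. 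Any $p$ in the intersection yields a pair in $Y_w^\circ\times C$ whose relative position is exactly $w^{-1}w_Pw'$. Since the generic relative position on $Y_w\times\overline{C}$ coincides with that on the dense open $Y_w^\circ\times C$, this completes the proof.
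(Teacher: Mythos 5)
Your reduction to a Bruhat--Tits computation correctly identifies the upper bound $r(Y_w,\overline{C})\leq w^{-1}w_Pw'$ via the length-additivity in Proposition~\ref{proprcellstr}, and you correctly observe that exhibiting a single pair attaining this value would finish the proof. However, the final step contains a genuine gap: the condition ``$pwI/I\in Y_w^\circ$'' is \emph{not} open in $\widetilde{P}$ (or in $\widetilde{P}wI/I$). By definition $Y_w^\circ=\widetilde{P}wI/I\cap\mcF l_N$, and since $\mcF l_N$ is a \emph{closed} subscheme of $\mcF l$, the set $Y_w^\circ$ is a proper closed subvariety of the $\widetilde{P}$-orbit $\widetilde{P}wI/I$ (it is only open inside $Y_w$, not inside the ambient orbit). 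Your second condition ``$p^{-1}q_0\in Iw_PI$'' is indeed open, but a non-empty closed subvariety and a non-empty open subset of an irreducible variety need not meet; for instance, $Y_w^\circ$ could a priori lie entirely inside the complement of the big Bruhat cell based at $q_0$. So the concluding ``two non-empty opens in an irreducible variety intersect'' does not apply.

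This is exactly the non-trivial content the paper has to work for. After establishing the same upper bound, the paper picks a point $xw'$ (with $x\in W_P$) on the given component of $Y_{w'}$ and reduces the needed lower bound to showing that $Y_w^\circ$ contains a point in relative position $w_Pw$ to $x$. By the affine-bundle structure of $Y_w^\circ$ over the Hessenberg variety $H^{W_w}_N\subset(SL_m/B)^l$, this becomes the finite-dimensional claim that $H^{W_w}_N$ contains a point in relative position $w_P$ to $x$. Here irreducibility of $H^{W_w}_N$ (Lemma~\ref{connected:p:orbit}, which is where the hypothesis $w\in\mathfrak{F}$ enters essentially) allows one to intersect the $l$ open loci $U_i$ with $H^{W_w}_N$, since each $U_i\cap H^{W_w}_N$ is non-empty (the Hessenberg variety contains the diagonal copy of $S_m$). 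In other words, the ``two non-empty opens in an irreducible variety meet'' argument is valid, but must be run inside the irreducible variety $H^{W_w}_N$, not inside $\widetilde{P}wI/I$. Your proof needs to import this reduction and use irreducibility of the Hessenberg variety; as written, the hypothesis $w\in\mathfrak{F}$ is never used beyond citing irreducibility of $Y_w$, and the argument would spuriously prove the statement even when $w\notin\mathfrak{F}$ and $Y_w$ is reducible.
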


\begin{proof}
    The relative position between the $\widetilde{P}$-orbits at $w$ and $w'$ is exactly given by $$r(\widetilde{P}wI/I,\widetilde{P}w'I/I)=w^{-1}w_Pw',$$ as the length of the product is the sum of the lengths. This follows from Proposition \ref{proprcellstr}.
	
	Consider a component $X$ of $Y_{w'}$. It thus follows that we have a bound on relative position given by $r(Y_w,X)\leq r(\widetilde{P}wI/I,\widetilde{P}w'I/I)=w^{-1}w_Pw'$.
	
	Now note that any component in $Y_{w'}$ contains a point of $\widetilde{W}$ in the $\widetilde{P}$-orbit of $w'$, as we have seen in the proof of Lemma \ref{fixpoints:P:orbits}. It thus follows for each component there exists an $x\in W_P$ such that the component contains $xw'$. Hence we have, that the relative position between $xw'$ and $x$ is given by $(w')^{-1}$.
	
	Now consider the relative position of $x$ with $Y_w$. For this it is enough to consider the $\widetilde{P}$-orbit at $w$. Further it is easy to see that the points in relative position $w_Pw$ to $x$ are precisely the preimage under the map $\widetilde{P}wI/I\rightarrow\widetilde{P}/I$ of the points in relative positions $w_P$ to the point $x$. Thus to check that the relative position of $x$ to $Y_w$ is $w_pw$ we just need to check that the relative position of $x$ to the Hessenberg variety $H^{W_w}_N$ is $w_P$.
	
	Recall that we have $l$ projections $\pi_i:\widetilde{P}/I\rightarrow SL_m/B$. Consider the preimage $U_i$ of the points in relative position $w_0\in S_m$ to $\pi_l(x)$. The points in relative position $w_P$ to $x$ is given by the intersection of all $U_i$.
	
	The Hessenberg variety for $w\in\mathfrak{F}$ is irreducible by Lemma \ref{connected:p:orbit}. It follows that if $U_i$ intersects non-trivially with $H^{W_w}_N$ for all $i$, then there is a point in $H^{W_w}_N$ in relative position $w_P$ to $x$, as non-empty open subsets of $H^{W_w}_N$ intersects in a non-empty subset. Recall that we contain the diagonal inclusion of $S_m\rightarrow (S_m)^l$ in the Hessenberg variety $H^{W_w}_N$. It is clear that for each $U_i$ there exists one of these points contained in $U_i$. It thus follows that the relative position of $Y_w$ to $x$ is $w^{-1}w_P$. Thus we get that the relative position of $Y_w$ and $xw'$ is $w^{-1}w_Pw'$, thus the relative position of $Y_w$ and any components of $Y_{w'}$ is at least $w^{-1}w_Pw'$.
	
	It follows from this lower bound and the above upper bound, that we have the relative position is given precisely by $w^{-1}w_Pw'$, as required.
\end{proof}

Further from the above computations we can also understand the exact number of components in the intersection of each $\widetilde{P}$-orbit.

\begin{Lemma}\label{lem:numberofcomp}
	Assume $w$ satisfies equations $(1)$, $(2)$ and $(3)_i$ precisely when $i\in I_w$ for some subset $I_w\subset[m-1]$. Consider the subgroup $S_{I_w}\subset S_m$ generated by the simple reflections $s_i$ for $i\in I_w$.
	
	Then $Y_w$ contains $\# S_m/S_{I_w}$ irreducible components.
\end{Lemma}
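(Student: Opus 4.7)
The plan is to count irreducible components of $Y_w$ by counting connected components of the associated Hessenberg variety $H^{W_w}_N\subset\widetilde{P}/I$. From the proof of Lemma \ref{smooth:open} together with Lemma \ref{connected:p:orbit}, the space $Y_w^\circ$ is realized as an iterated affine space bundle over $H^{W_w}_N$, so its connected components (equivalently, the irreducible components of the closure $Y_w$) are in bijection with the connected components of $H^{W_w}_N$. Thus the problem reduces to counting the components of this Hessenberg variety.

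Next I would verify that every connected component of $H^{W_w}_N$ contains a point of the diagonal $S_m\hookrightarrow(S_m)^l$. As in the proof of Lemma \ref{connected:p:orbit}, a suitable cocharacter of $S$ contracts every point of $H^{W_w}_N$ to a point of $W_P\cap H^{W_w}_N\subset(S_m)^l$, so every component contains some $(w_1,\ldots,w_l)$. Applying Lemma \ref{connected:fibers} to the projection onto the last factor, the fiber of $H^{W_w}_N\to H_h^W$ over $w_l\in S_m$ is connected and contains both $(w_1,\ldots,w_l)$ and the diagonal point $(w_l,\ldots,w_l)$ (which always lies in $H^{W_w}_N$). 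Hence every component of $H^{W_w}_N$ meets the diagonal.

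The equivalence relation on diagonal points induced by the component structure is then analyzed as follows. For each $i\in I_w$ and each $u\in S_m$, the $\mathbb{P}^1$-construction at the end of the proof of Lemma \ref{connected:p:orbit} gives a $\mathbb{P}^1\subset H^{W_w}_N$ joining $(u,\ldots,u)$ and $(us_i,\ldots,us_i)$; thus any two diagonal points lying in the same right coset of $S_{I_w}$ are in the same connected component of $H^{W_w}_N$.

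The main obstacle, and the step that requires the most care, is the converse direction: distinct right cosets of $S_{I_w}$ give distinct components. To handle this I would project $H^{W_w}_N$ onto one factor $SL_m/B$, obtaining a morphism to the finite Hessenberg variety $H_h^W$, where the $B$-stable subspace $W\subset\mathrm{End}(V^k)$ contains the negative simple root $-\alpha_i$ precisely when equation $(3)_i$ holds, i.e.\ when $i\in I_w$. By the analysis of such finite Hessenberg varieties in \cite{boixeda2019fix,Boixedathesis}, the connected components of $H_h^W$ are labeled by the right cosets of $S_{I_w}$ in $S_m$, with the point $u\in S_m\subset SL_m/B$ lying in the component indexed by $uS_{I_w}$. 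Since the projection is continuous, diagonal points of $H^{W_w}_N$ lying in distinct right cosets map to distinct components of $H_h^W$ and therefore lie in distinct components of $H^{W_w}_N$. Combining the three steps produces a bijection between the connected components of $H^{W_w}_N$ and the set $S_m/S_{I_w}$, yielding the claimed count $\#S_m/S_{I_w}$ of irreducible components of $Y_w$.
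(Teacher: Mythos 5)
Your proof is correct and follows essentially the same strategy as the paper: reduce counting irreducible components of $Y_w$ to counting connected components of the Hessenberg variety $H^{W_w}_N$ (via the affine bundle description and smoothness from Lemma~\ref{smooth:open}), then exploit the projection to the finite Hessenberg variety $H_h^W$ on $SL_m/B$ and the structure of its components as described in \cite{boixeda2019fix,Boixedathesis}. The paper phrases the count via the containment $H_h^W \subset S_m P_{I_w}/B$ and the connectedness of $P_{I_w}/B\cap H_h^W$, whereas you make the bijection explicit by showing every component meets the diagonal $S_m$ and that diagonal points are identified exactly when they lie in the same coset $uS_{I_w}$; these are two presentations of the same counting. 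One small terminology slip: $uS_{I_w}$ is a \emph{left} coset of $S_{I_w}$, not a right coset — but since the number of left and right cosets agree, this does not affect the count.
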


\begin{proof}
	As we have seen above we have a map $\widetilde{P}wI/I\cap\mcF l_N\rightarrow \widetilde{P}/I$ has image given by a Hessenberg variety and over that is a recursive affine space bundle.
	
	Further $Y_w^\circ$ is smooth and it thus follows that the irreducible components coincide with the connected components and these coincide with the connected components of the Hessenberg variety.
	
	Again recall we have several maps $\widetilde{P}/I\rightarrow SL_m/B$ and the image of the Hessenberg variety is contained in a Hessenberg variety corresponding to a regular semisimple map on $SL_m$.
	
	Note that the preimage under this map of each connected component is connected. This follows by Lemma \ref{connected:fibers} and the proof of Lemma \ref{connected:p:orbit}.
	
	Consider the parabolic $P_I\subset SL_m$ generated by $S_I$ and the Borel subgroup $B$. It follows from the description of the Hessenberg variety $H_h$ as described in \cite{boixeda2019fix,Boixedathesis} that $H_h^W$ is a subset of $S_m P_{I_w}/B$ and the intersection of $P_{I_w}/B$ with the Hessenberg variety is connected as it can be described as a product of Hessenberg varieties satisfying the conditions of connectedness.
	
	It follows from this that the number of connected components are precisely given by the number of connected components of $S_m P_{I_w}/B$. This is easily seen to be given by $\# S_m/S_{I_w}$. This gives a multinomial coefficient which describes the number of irreducible components of $Y_w$.
\end{proof}

We end this section by giving a description of every component as a component of the fundamental box after we apply the action of the $F_i$ as described in Section \ref{def:Fi}. We further identify the $Y_w$ for general $w$ in terms of this description.

\begin{Theorem}\label{lem:allcomp}
\begin{enumerate}
    \item Given any $A_w$ satisfying equations (1) and (2) (i.e. $w\in w_0^\lambda{\cdot}R_{T^\lambda}$), we have $$Y_w=\bigcup_{\sigma\in S_m}\prod_{i=1}^{m-1}F_{\sigma(i)}^{d_i+d_{i+1}+\ldots+d_{m-1}}(Y_{w'})$$ where $d_j=\Diff_j(w^{-1})-\lch_j(P(w^{-1}w_0^\lambda))$ for $j\in[m-1]$ and $w'\in \mfF$ satisfies $$w=w^{(k_1)}\phi^{k_1}(w^{(k_2)}\phi^{k_2}(\cdots w^{(k_\varepsilon)}\phi^{k_\varepsilon}(w')\cdots))$$ for $\{k_1,\ldots,k_\varepsilon\}=\{1^{d_1},2^{d_2},\ldots,(m-1)^{d_{m-1}}\}$ as a multi-set.
    \item All irreducible components of $\mcF l_N$ are given without repetition by $F^c(Y_w)$ for $w\in \mfF$, $c\in\mathbb{N}^m$ and $\min\{c_1,\ldots, c_m\}=0.$
\end{enumerate}
\end{Theorem}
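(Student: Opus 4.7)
The plan is to prove part (1) by combining the algebraic decomposition from Corollary \ref{Cor:Fi} with a geometric interpretation of each factor via the $F_i$ operators, and then derive part (2) as a consequence.

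For part (1), begin with the decomposition
$$w = w^{(k_1)}\phi^{k_1}(w^{(k_2)}\phi^{k_2}(\cdots w^{(k_\varepsilon)}\phi^{k_\varepsilon}(w')\cdots))$$
given by Corollary \ref{Cor:Fi}, where $w'\in\mfF$ and the multiset $\{k_1,\ldots,k_\varepsilon\}$ equals $\{1^{d_1},\ldots,(m-1)^{d_{m-1}}\}$. The central step is to identify each $w^{(k)}$, viewed as an element of $\widetilde{W}=S_n\ltimes\mathbb{X}^\vee$, with an element whose translation part lies in the $W_P$-orbit of the $k$-th fundamental coweight of the Levi of $\widetilde P$, and whose geometric realization on $\mcF l_N$ is a product $F_{i_1}\cdots F_{i_k}$ for some $k$-subset $\{i_1<\cdots<i_k\}\subseteq[m]$. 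The subset itself is not determined by the algebraic data alone; the ambiguity reflects exactly that a single $\widetilde P$-orbit $\widetilde PwI/I\cap\mcF l_N$ may contain several irreducible components. Once this identification is in place, composing over all $j$ and encoding the cumulative choice of subsets into a single permutation $\sigma\in S_m$ yields
$$\prod_{j=1}^{\varepsilon}F_{\sigma(1)}F_{\sigma(2)}\cdots F_{\sigma(k_j)}=\prod_{i=1}^{m-1}F_{\sigma(i)}^{d_i+d_{i+1}+\cdots+d_{m-1}},$$
since the exponent of $F_{\sigma(i)}$ counts the factors $w^{(k_j)}$ with $k_j\ge i$.

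Two permutations $\sigma,\sigma'\in S_m$ give the same product if and only if $\sigma^{-1}\sigma'\in S_{I_w}$ with $I_w=\{i\in[m-1]\mid d_i=0\}$, so the union ranges over $|S_m/S_{I_w}|$ distinct subvarieties $F^c(Y_{w'})$. Each is irreducible since $F^c$ is an automorphism and $Y_{w'}$ is irreducible by Lemma \ref{connected:p:orbit}; moreover, by Lemma \ref{lem:numberofcomp} and Proposition \ref{propnumcomb}, the count $|S_m/S_{I_w}|$ equals the multinomial coefficient $\binom{m}{j_1,j_2-j_1,\ldots,m-j_s}$, which is the total number of irreducible components of $Y_w$. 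Combined with the inclusion $F^c(Y_{w'})\subseteq Y_w$ provided by the algebraic decomposition, the union must exhaust $Y_w$.

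For part (2), combining the decomposition $\mcF l_N=\bigcup_{w\in w_0^\lambda\cdot R_{T^\lambda}}Y_w$ with part (1) shows that every irreducible component of $\mcF l_N$ has the form $F^c(Y_{w'})$ for some $w'\in\mfF$ and $c\in\mathbb{Z}^m$. The action of $\Lambda\cong\mathbb{Z}^m$ on irreducible components factors through $\mathbb{Z}^m/\mathbb{Z}(1,\ldots,1)$, since the diagonal element $F_1\cdots F_m$ corresponds to multiplication by the centralizing element $f'_1\cdots f'_m$ that preserves the component structure (it shifts each flag by $m$ positions while multiplying by a matrix of determinantal valuation $m$, so it acts as a reindexing that fixes each irreducible component of $\mcF l_N$). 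The condition $c\in\mathbb{N}^m$ with $\min c=0$ then selects a unique representative in each coset of $\mathbb{Z}(1,\ldots,1)$ inside $\mathbb{Z}^m$, and distinct $w'\in\mfF$ yield distinct orbits since $\mfF$ parameterizes orbit representatives of $w_0^\lambda\cdot R_{T^\lambda}$ under the translation action produced in part (1).

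The main obstacle is the identification in part (1) of $w^{(k)}$ with a coweight translation whose geometric realization is $F_{i_1}\cdots F_{i_k}$. This requires translating the explicit window notation of $w^{(k)}$ into the decomposition $\widetilde W=S_n\ltimes\mathbb X^\vee$, recognizing the translation part as a $W_P$-conjugate of the fundamental coweight, and verifying the compatibility with the $F_i$ automorphisms defined via conjugation of the centralizing matrices $f_i'$. Once this correspondence is established, the combinatorial rearrangement leading to the exponent $d_i+\cdots+d_{m-1}$ and the counting verification via Lemma \ref{lem:numberofcomp} and Proposition \ref{propnumcomb} are routine.
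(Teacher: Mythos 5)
Your proposal follows the same general line as the paper: start from the decomposition of $w$ in Corollary \ref{Cor:Fi}, realize the factors geometrically via the $F_i$, use the monodromy $S_m$-action to enumerate the components, and conclude by a counting argument. However, there are two substantive problems.

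First, the proposal never establishes the central identity on $\widetilde{W}$ that drives part (1). The paper verifies by direct matrix computation that $F^{\mathbb{1}_k}(w)=w^{(k)}\phi^k(w)$, where $\mathbb{1}_k$ is the tuple with $k$ ones followed by $m-k$ zeroes. This pins down the canonical $k$-subset as $\{1,\dots,k\}$, with all other choices arising through the monodromy $S_m$-action on components of $Y_w$. You describe this as ``the main obstacle'' and leave it as a claim that $w^{(k)}$ is ``realized'' by $F_{i_1}\cdots F_{i_k}$ for an undetermined subset, but without the explicit identification the decomposition of $w$ into products of $w^{(k_j)}$'s cannot be translated into the claimed product of $F_i$'s. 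Relatedly, you do not address the verification that $F^c$ respects the $\widetilde{P}$-orbit stratification (the paper shows $F^c({}^{w_P}IwI/I)\subset\widetilde{P}F^c(w)I/I$ for $c$ weakly decreasing via a root-space argument), which is what actually produces the containment $F^c(Y_{w'})\subset Y_w$ that your counting argument requires.

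Second, the uniqueness argument in part (2) contains an error. You claim that $F_1\cdots F_m=F^{\mathbb{1}}$ ``acts as a reindexing that fixes each irreducible component.'' This is false: as the Remark following the theorem notes, $F^{\mathbb{1}}(Y_w)=Y_{\phi^m(w)}$, so the diagonal translation permutes the irreducible components nontrivially according to the rotation $\phi^m$ acting on $\mathfrak{F}$ (via Corollary \ref{corphim}). The correct reason that the normalization $c\in\mathbb{N}^m$, $\min c=0$ gives a parametrization without repetition is precisely this relation $F^{c+\mathbb{1}}(Y_w)=F^c(Y_{\phi^m(w)})$, combined with $\phi^m$ preserving $\mathfrak{F}$; it is not that $F^{\mathbb{1}}$ acts trivially. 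As written, the argument for ``without repetition'' does not go through.
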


\begin{proof}
	To begin consider the action on the points $\widetilde{W}\cap\mcF l_N$. The points of $\widetilde{W}$ are sent to themselves under the action of $F_i$ and thus it follows that the action preserves the set of elements of $\widetilde{W}$ that satisfy equation $(1)$.
	
	For an $m$-tuple $c$ recall that we define $F^c=F_1^{c_1}\ldots F_m^{c_m}$. We consider $c$ weakly decreasing $c_1\geq c_2\geq\ldots\geq c_m$.
	
	We begin by first claiming that $F^{\mathbb{1}_k}(w)=w^{(k)}\phi^k(w)$ where $\mathbb{1}_k$ is the $m$-tuple given by $k$ $1$'s followed by $m-k$ $0$'s. In fact, by direct computation,
	\begin{equation*}
	    \begin{split}
	      &\; s^{-k}f_1'^{-1}\ldots f_k'^{-1}\\
	     = & \;Ds^{-k}[1{+}m,2{+}m,\ldots,k{+}m,1{+}k,\ldots,m,1{+}2m,\ldots,k{+}2m,1{+}k{+}m,\ldots, 2m,\ldots,n] \\
	      = & \; D[1{+}m{-}k,2{+}m{-}k,\ldots,m,1,\ldots,m{-}k,1{+}2m{-}k,\ldots,2m,1{+}m,\ldots,2m{-}k,\ldots,n]\\
	      = & \; D \phi^{-k}(w^{(k)}).
	    \end{split}
	\end{equation*}
	where $D=\text{diag}\{1,\ldots,1,h_1^{-1},\ldots,h_k^{-1}\}.$ Hence $f_1'^{-1}\ldots f_k'^{-1}=\phi^k(D)w^{(k)}s^k  $ and the claim follows.
	
	
	Now we prove that $F^c({}^{w_P}IwI/I)\subset \widetilde{P}F^c(w)I/I$ for $c$  weakly decreasing and $w$ satisfying $(1)$ and $(2)$. Note that ${}^{w_P}IwI/I$ is an open subset of $\widetilde{P}wI/I$ and thus it will follow that the component $X$ of $Y_w$ containing $w$ satisfies that $F^c(Y_w)\subset Y_{F^c(w)}$.
	
	To check the statement note that it is enough to prove $\text{ad}(f')^c(U_\alpha)\subset\widetilde{P}$ for the root spaces $U_\alpha$ such that $w_P(\alpha)$ is positive and $w^{-1}(\alpha)$ is negative. To check this we just note that a root $e_i-e_j+k\delta$ can satisfy this only if $\overline{i}<\overline{j}$, where $\overline{i}\equiv i\text{ (mod m)}$ and $\overline{i}\in [m]$. Note that $\text{ad}(f')^c(U_\alpha)\subset\widetilde{P}$ if and only if $c_{\overline{i}}+\left\lfloor\frac{i}{m}\right\rfloor-c_{\overline{j}}-\left\lfloor\frac{j}{m}\right\rfloor\geq 0$. Note that as $U_\alpha\subset\widetilde{P}$, we have $\left\lfloor\frac{i}{m}\right\rfloor-\left\lfloor\frac{j}{m}\right\rfloor\geq 0$, thus the result follows as $\overline{i}<\overline{j}\Rightarrow c_{\overline{i}}\geq c_{\overline{j}}$ as $c$ is  weakly decreasing. 
	
	Consider the action of $S_m\hookrightarrow W_P$ the diagonal group. This acts on $N$ by sending it to a similar element, but with the regular semisimple $h$ exchanged by the action of $x\in S_m$ on it ${}^x h$. Note that these two are part of the family $\mcF l_{N}$ and so there is a monodromy action on the components on this space. Note that the $\widetilde{P}$-orbits are preserve, so there is an action on $Y_w$. Note that the action on the fix points $W_Pw\hookrightarrow\widetilde{P}wI/I$ is given by left multiplication. Note that the component of $Y_w$ are determined by the intersection with the points $S_m w\hookrightarrow W_p w$. Thus we get all the components of $Y_w$ are related by this action
	
	Using this we can check that for $\sigma\in S_m$ $\sigma(F^c(Y_w))=F^{\sigma(c)}(Y_w)$. Thus for $c$ weakly decreasing, we have $\cup_{\sigma\in S_m} F^{\sigma(c)}(Y_w)= Y_{F^c(w)}$ as we have an inclusion and both have the same number of components. From Corollary \ref{Cor:Fi}, we have the iterative expression $w=w^{(k_1)}\phi^{k_1}(w^{(k_2)}\phi^{k_2}(\cdots w^{(k_\varepsilon)}\phi^{k_\varepsilon}(w')\cdots))$ for any $w\in w_0^\lambda{\cdot}R_{T^\lambda}$, hence the first result is proved.
	
	
	Since $\mcF l_N=\bigcup_{w\in w_0^\lambda{\cdot}R_{T^\lambda}} Y_w$, we arrive at the second result by putting all irreducible components of $Y_w$'s in the first result together.
\end{proof}

\begin{Remark}
By Corollary \ref{corphim}, we know $\phi^m(w)\in \mfF$ if $w\in\mfF$. Hence it follows from the proof of Theorem \ref{lem:allcomp} that $F^{\mathbb{1}}(Y_w)=Y_{\phi^m(w)}$ (where $\mathbb{1}=\mathbb{1}_m$ in the vector of all 1's) is irreducible when $w\in \mfF$. This provides an alternative explanation of the condition $c\in\mathbb{N}^m$ and $\min\{c_1,\ldots, c_m\}=0$ in Theorem \ref{lem:allcomp} (2).
\end{Remark}

We end with a lemma to understand the relative position of the components with the above description.
\begin{Lemma}
\label{commonrotation}
    $r(F^{\mathbb{1}_k}(Y_{w'}),F^c (Y_w))=\phi^k(r(Y_{w'},F^{c-\mathbb{1}_k} (Y_w)))$ for any $w',w\in\mathfrak{F}$.
\end{Lemma}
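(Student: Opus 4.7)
The plan is to reduce the lemma to the identity
$$r(F^{\mathbb{1}_k}(X),F^{\mathbb{1}_k}(Y)) = \phi^k(r(X,Y))\qquad\text{for all } X,Y\in\mcF l_N,$$
and then prove this identity by unwinding the $GL_n(\mathcal{K})$-factorization of $F^{\mathbb{1}_k}$ obtained inside the proof of Theorem \ref{lem:allcomp}. The reduction itself is immediate: because the $F_i$ commute, $F^c=F^{\mathbb{1}_k}\circ F^{c-\mathbb{1}_k}$, so applying the displayed identity to $X=Y_{w'}$ and $Y=F^{c-\mathbb{1}_k}(Y_w)$ yields exactly the lemma.

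For the identity itself, the key input is the factorization
$$F:=f_1'^{-1}\cdots f_k'^{-1} = \phi^k(D)\,w^{(k)}s^k,$$
with $D$ diagonal and $s^k$ the rotation by $k$, already carried out in the proof of Theorem \ref{lem:allcomp}. From this factorization one reads off that $F^{\mathbb{1}_k}$ acts on the subset $\widetilde{W}\cdot I/I\subset\mcF l$ by $wI/I \mapsto w^{(k)}\phi^k(w)I/I$. The crucial complementary observation is that, even though $F\notin SL_n(\mathcal{K})$, conjugation by $F$ preserves $SL_n(\mathcal{K})$ (determinants are invariant under conjugation), so $F^{\mathbb{1}_k}$ is twisted $SL_n(\mathcal{K})$-equivariant in the sense that $F^{\mathbb{1}_k}(g\cdot x)=(FgF^{-1})\cdot F^{\mathbb{1}_k}(x)$ with $FgF^{-1}\in SL_n(\mathcal{K})$ whenever $g\in SL_n(\mathcal{K})$.

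Combining the two, I take arbitrary $X,Y\in\mcF l_N$ with $r(X,Y)=w$ and choose generic representatives $X=g\cdot I/I$, $Y=g\cdot wI/I$ with $g\in SL_n(\mathcal{K})$. Then
$$F^{\mathbb{1}_k}(X)=(FgF^{-1})\cdot w^{(k)}I/I,\qquad F^{\mathbb{1}_k}(Y)=(FgF^{-1})\cdot w^{(k)}\phi^k(w)I/I,$$
and since $FgF^{-1}\in SL_n(\mathcal{K})$, the $SL_n(\mathcal{K})$-invariance of the relative position gives $r(F^{\mathbb{1}_k}(X),F^{\mathbb{1}_k}(Y))=(w^{(k)})^{-1}(w^{(k)}\phi^k(w))=\phi^k(w)=\phi^k(r(X,Y))$. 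Passing to generic points of the irreducible subvarieties $Y_{w'}$ and $F^{c-\mathbb{1}_k}(Y_w)$ then yields the lemma.

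The main bookkeeping point, and the only one I expect to require real care, is the interaction of the rotation factor $s^k$ with the non-$SL_n$ factor $\prod f_i'$ in $F$. Individually neither acts through $SL_n(\mathcal{K})$, but their combined conjugation action on $SL_n(\mathcal{K})$ is precisely $\phi^k$; once the factorization $F=\phi^k(D)w^{(k)}s^k$ is in hand, verifying that this combined conjugation produces $\phi^k$ (rather than $\phi^{-k}$ or an extra torus contribution to the relative position) is the one spot where the sign convention for $\phi$ must be checked carefully.
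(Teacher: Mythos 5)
Your proposal is correct, and the underlying mechanism coincides with the paper's: reduce to pairs of points, use $SL_n(\mathcal{K})$-invariance of relative position to strip off a common left factor, and observe that the surviving conjugation by the shift $s^k$ preserves $I$ and so acts on $\widetilde{W}$ by $\phi^k$. The difference is how much machinery is invoked along the way. The paper simply writes $F_i(gI)=(f'_i)^{-1}gs^{-1}I$, cancels the common left factor, and uses $sIs^{-1}=I$ to read off a single $\phi$, then iterates; it never needs the identity of the resulting coset beyond that. You instead import the factorization $F=\phi^k(D)w^{(k)}s^k$ from the proof of Theorem~\ref{lem:allcomp} to compute the action on coordinate flags and set up a twisted $SL_n(\mathcal{K})$-equivariance. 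This works, but both ingredients you extract from the factorization---$F^{\mathbb{1}_k}(wI/I)=w^{(k)}\phi^k(w)I/I$ and $F^{\mathbb{1}_k}(g\cdot x)=(FgF^{-1})\cdot F^{\mathbb{1}_k}(x)$---already follow from the elementary identity $F^{\mathbb{1}_k}(gI/I)=Fgs^{-k}I/I$, so the detour buys nothing beyond the paper's shorter computation. Two points worth making explicit to tighten your version: the torus factor $\phi^k(D)$, together with its conjugates under $\widetilde{W}$, lies in $T(\mathcal{O})\subset I$, which is precisely what rules out the ``extra torus contribution'' you flag as a worry; and $w^{(k)}\in\widetilde{S_n}$ lifts to $SL_n(\mathcal{K})$, which you use implicitly when cancelling $w^{(k)}$ in the final step $r(w^{(k)}I,\,w^{(k)}\phi^k(w)I)=\phi^k(w)$.
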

\begin{proof}
    We will just prove that $r(F_i(x), F_i(y))=\phi(r(x,y))$, the result then follows, as the relative position of irreducible subsets is just the generic relative position of pairs of points.
    
    Note that $F_i$ is given by multiplication by $(f'_i)^{-1}$ followed by shifting the lattices by one. Note that this shifting can by understood as the following transformation $gI\mapsto gs^{-1}I$, where $s$ is the affine permutation introduced before Lemma \ref{lengthformula}.
    
    This is indeed well defined, because $sIs^{-1}=I$. Thus we have if $x=gI$, $y=g'I$, then $r(x,y)=r(I,g^{-1}g'I)$, where we have $g^{-1}g'I\in I r(I,g^{-1}g'I)I$. Note than $r(F_i(x),F_i(y))=r(I,sg^{-1}g's^{-1}I)$ and $sg^{-1}g's^{-1}I\in I sr(I,g^{-1}g'I)s^{-1}I$, but note that $sws^{-1}=\phi(w)$, hence the result follows as required. 
\end{proof}
\section{The case of $n=2$}
 The theorems about the Knuth equivalence classes in the Section 2 do not work for $n=2$, so we discuss the $n=2$ case separately in this section, where both geometry and combinatorics can be explicitly computed.
 
 For $n=2$, $\widetilde{S_2}$ is generated by $s_1=[2,1]$ and $s_0=[0,3]$ with relations $s_1^2=s_0^2=id$. There are two partitions $(2)$ and $(1,1)$ of size $2$. The two-sided cells $C_{(2)}=\{[1,2]\}$ and $C_{(1,1)}=\widetilde{S_2}\setminus C_{(2)}$. Explicit computation of affine matrix ball construction gives
 \begin{align*}
 & \Phi\left([2k+2,1-2k]=s_1(s_0s_1)^k\right)=\left(\;\raisebox{3pt}{\begin{ytableau}\overline{1 }\\ \overline{2}  \end{ytableau}}\,,\;\raisebox{3pt}{\begin{ytableau}\overline{1}\\ \overline{2}  \end{ytableau}}\,,\begin{pmatrix}\displaystyle -k\\k \end{pmatrix}\right), & k\ge 0;\\
 & \Phi\left([-1-2k,4+2k]=(s_1s_0)^{k+1}\right)=\left(\;\raisebox{3pt}{\begin{ytableau}\overline{1 }\\ \overline{2}  \end{ytableau}}\,,\;\raisebox{3pt}{\begin{ytableau}\overline{2}\\ \overline{1}  \end{ytableau}}\,,\begin{pmatrix}\displaystyle -k\\k \end{pmatrix}\right), & k\ge 0;\\
 & \Phi\left([2k+1,2-2k]=(s_0s_1)^k\right)=\left(\;\raisebox{3pt}{\begin{ytableau}\overline{2}\\ \overline{1}  \end{ytableau}}\,,\;\raisebox{3pt}{\begin{ytableau}\overline{1 }\\ \overline{2}  \end{ytableau}}\,,\begin{pmatrix}\displaystyle -k\\k \end{pmatrix}\right), & k\ge 1;\\
 & \Phi\left([-2k,3+2k]=s_0(s_1s_0)^k\right)=\left(\;\raisebox{3pt}{\begin{ytableau}\overline{2}\\ \overline{1}  \end{ytableau}}\,,\;\raisebox{3pt}{\begin{ytableau}\overline{2}\\ \overline{1}  \end{ytableau}}\,,\begin{pmatrix}\displaystyle -k\\k \end{pmatrix}\right), & k\ge 0.
 \end{align*}
 
 Now we consider the affine Springer fibers of two types. When $N=\begin{pmatrix}\displaystyle 0 &  1\\t & 0 \end{pmatrix}$, $\mcF l_N=\{I/I\}$, a singleton, and $r(I/I,I/I)=id\in C_{(2)}$.
 
 When $N=\begin{pmatrix}\displaystyle t &  0\\0 & -t \end{pmatrix}$, $\mcF l_N$ is an infinite chain of $\mathbb{P}^1$'s. Explicitly, it consists of irreducible components $\{C_k=F_1^k(C_0)\}_{k\in\mbZ}$, where $C_0=G(\mcO)/I=\{V.\in\mcF l\mid V_0=\mcO\langle e_1,e_2\rangle\}\simeq G/B$ and for $k\in\mbZ$,
 \begin{align*}
& C_{2k}=\{V.\in\mcF l\mid V_0=\mcO\langle t^{-k}e_1,t^ke_2\rangle\},\\
& C_{2k+1}=\{V.\in\mcF l\mid V_1=\mcO\langle t^{-k-1}e_1,t^ke_2\rangle\}.
 \end{align*}
 Moreover, each $C_k$ intersects only with $C_{k-1}$ and $C_{k+1}$ at precisely one flag.
 
 The entire affine Springer fiber is the union of $Y_w=\overline{G(\mcO)wI/I\cap \mcF l_N}$ for $w$ satisfying $w^{-1}(1)<w^{-1}(2)$. $Y_{id}=C_0$ and $Y_w=C_k\sqcup C_{-k}$ for $w=[1-k,2+k]^{-1}$ and $k\ge 1.$ Then $r(C_0,C_k)\le r(Y_{id},Y_w)\le s_1* s_1w=s_1w$ for $w=[1-k,2+k]^{-1}$. And by the explicit shape of $C_k$'s above, this upper bound can be reached by the relative position of certain coordinate flags in two components respectively. Hence $r(C_0,C_k)=s_1s_0s_1s_0\ldots$ where there are $k+1$ terms. Similarly $r(C_1,C_k)=s_0s_1s_0s_1\ldots$ where there are $k$ terms. 
 
 Since $\Lambda=\left\langle F_1F_2^{-1}=\begin{pmatrix}\displaystyle t^{-1} & 0\\0 & t \end{pmatrix}\right\rangle$ and $F_1F_2^{-1}(C_k)=C_{k+2}$, we have established the following commutative diagrams where $\theta$ and $\Theta$ are bijections:
 \begin{equation*}
 \begin{tikzcd}
 \Irr(\mathcal{F}l_{N})/\Lambda\arrow[dd,"\theta"] && \Irr(\mathcal{F}l_{N})\times_\Lambda \Irr(\mathcal{F}l_{N})\arrow[ll, "pr_i"']\arrow[rr, "r"]\arrow[dd,"\Theta"] && \widetilde{S_2}\\
 &&&&\\
 T((1,1)) && \Omega_{(1,1)}\arrow[ll,"pr_i"']\arrow[uurr,"\Psi"'] &&
 \end{tikzcd}
 \end{equation*}
 where $pr_i$, $i=1,2$ are the natural projection maps onto the first and second component and for all $k\in\mbZ$, $\theta(C_{2k})=\raisebox{3pt}{\begin{ytableau}\overline{1}\\ \overline{2}  \end{ytableau}}\,$, $\theta(C_{2k+1})=\raisebox{3pt}{\begin{ytableau}\overline{2}\\ \overline{1}  \end{ytableau}}$ and

 \begin{align*}
 & r(C_0,C_{2k})=(s_1s_0)^{|k|}s_1,\; & &  \Theta(C_0,C_{2k})=\left(\;\raisebox{3pt}{\begin{ytableau}\overline{1 }\\ \overline{2}  \end{ytableau}}\,,\;\raisebox{3pt}{\begin{ytableau}\overline{1 }\\ \overline{2}  \end{ytableau}}\,,\begin{pmatrix}\displaystyle -k\\k \end{pmatrix}\right),\\
  & r(C_0,C_{2k+1})=\left\{
  \begin{array}{ll}
  (s_1s_0)^{k+1}, & k\ge 0 \\
  (s_1s_0)^{-k}, & k\le-1\\
  \end{array} 
  \right. ,\;  & &  \Theta(C_0,C_{2k+1})=\left(\;\raisebox{3pt}{\begin{ytableau}\overline{1 }\\ \overline{2}  \end{ytableau}}\,,\;\raisebox{3pt}{\begin{ytableau}\overline{2 }\\ \overline{1}  \end{ytableau}}\,,\begin{pmatrix}\displaystyle -k\\k \end{pmatrix}\right),\\
  & r(C_1,C_{2k})=\left\{
  \begin{array}{ll}
  (s_0s_1)^{k}, & k\ge 1 \\
  (s_0s_1)^{-k+1}, & k\le 0\\
  \end{array} 
  \right. ,\;  &  &  \Theta(C_0,C_{2k+1})=\left(\;\raisebox{3pt}{\begin{ytableau}\overline{2}\\ \overline{1}  \end{ytableau}}\,,\;\raisebox{3pt}{\begin{ytableau}\overline{1}\\ \overline{2}  \end{ytableau}}\,,\begin{pmatrix}\displaystyle -k\\k \end{pmatrix}\right),\\
  & r(C_1,C_{2k+1})=(s_0s_1)^{|k|}s_0,\; & &  \Theta(C_0,C_{2k})=\left(\;\raisebox{3pt}{\begin{ytableau}\overline{2 }\\ \overline{1}  \end{ytableau}}\,,\;\raisebox{3pt}{\begin{ytableau}\overline{2 }\\ \overline{1}  \end{ytableau}}\,,\begin{pmatrix}\displaystyle -k\\k \end{pmatrix}\right).
 \end{align*}

\section{The main theorem}
In this final section, we explicitly compute the image of the relative positions between irreducible components of $\mcF l_N$ (obtained in Theorem \ref{lem:relativepositionforfundbox}) under the affine matrix ball construction. Then we conclude with the bijection from pairs of irreducible components modulo common translations to the triples $(P,Q,\rho)$ of rectangular shape and obtain an analogue of Steinberg and van Leeuven's result. The case of $n=2$ is explicitly computed in the previous section, so we focus on $n\ge3$ here. Recall from Theorem \ref{lem:allcomp} that all irreducible components of $\mcF l_N$ are given without repetition by $F^c(Y_w)$ for $w\in \mfF$, $c\in\mathbb{N}^m$ and $\min\{c_1,\ldots, c_m\}=0.$
\begin{Proposition}\label{proppremain}
	For $w',w''\in \mfF$, $c\in\mathbb{N}^m$ and $\min\{c_1,\ldots,c_m\}=0$, we have
	$$\Phi\left(r(Y_{w'},F^c(Y_{w''}))\right)=\left(\overline{(w')^{-1}}(T^\lambda),\overline{(w'')^{-1}}(T^\lambda)+|c|,\rho\right),$$where 
	\begin{align*}
	|c| = &\,c_1+\ldots+c_m,\\
	\rho = &\, \tilde{\rho}+s\left(\overline{(w')^{-1}}(T^\lambda)\right)-s\left(\overline{(w'')^{-1}}(T^\lambda)+|c|\right),\\
	\tilde{\rho}= & -\tilde{\rho}(w_0^\lambda w')+\tilde{\rho}(w_0^\lambda w'')-\left(\left\lfloor\frac{|c|}{n}\right\rfloor l+\delta^{\overline{|c|}}_1\left(\overline{(w'')^{-1}}\right)\right)\mathbb{1}+c^{\dom},
	\end{align*}
	and the function $\delta$ is defined to be $$ \delta^\alpha_i\left(\overline{u}\right)=\sum_{j=0}^{l-1}\mathbb{1}_{[n-\alpha+1,n]}\left(\overline{u}(i+jm)\right),\quad \alpha\in[0,n-1],i\in[m].$$
\end{Proposition}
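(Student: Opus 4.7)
Write $u := r(Y_{w'}, F^c(Y_{w''}))$. By Theorem~\ref{lem:relativepositionforfundbox} combined with Theorem~\ref{lem:allcomp}, $u = (w')^{-1} w_P v$ where $v \in \widetilde{S_n}$ is the basepoint of the component $F^c(Y_{w''})$, computed recursively from the identity $F^{\mathbb{1}_k}(x) = w^{(k)}\phi^k(x)$ established in the proof of Theorem~\ref{lem:allcomp}. The plan is to compute $P(u), Q(u), \rho(u)$ separately. For $P(u)$: since $v$ satisfies equations (1) and (2) and $w_P v$ can be verified to lie in $R_{T^\lambda}$ (using the specific form of $v$), the identity $(w')^{-1} \cdot R_{T^\lambda} = R_{\overline{(w')^{-1}}(T^\lambda)}$ from Proposition~\ref{proprcellstr} yields $P(u) = \overline{(w')^{-1}}(T^\lambda)$. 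For $Q(u)$: invoke $Q(u) = P(u^{-1})$ (Proposition~\ref{propinvAMBC}) and analyze $u^{-1} = v^{-1} w_P w'$ by the analogous cell argument; the factor $s^{-|c|}$ appearing in $F^c(gI) = (f')^{-c} g s^{-|c|} I$ contributes an overall $\phi^{|c|}$-shift at the basepoint level, which translates combinatorially to a shift of the insertion tabloid of $u^{-1}$ by $|c|$, giving $Q(u) = \overline{(w'')^{-1}}(T^\lambda) + |c|$.

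For $\rho(u)$, first handle the base case $c = 0$, where $u = (w')^{-1} w_P w''$. The elements $u$ and $(w')^{-1} w_P$ share the insertion tabloid $\overline{(w')^{-1}}(T^\lambda)$ and hence lie in the same right cell. Their recording tabloids are $\overline{(w'')^{-1}}(T^\lambda)$ and $T^\lambda$, both of which have charge divisible by $m = d_\lambda$ (Proposition~\ref{Abij}) and hence are Knuth-equivalent by Theorem~\ref{knuthQ}. This places $u$ and $(w')^{-1} w_P$ in a common right Knuth class, and the triviality of the rectangular-type monodromy group (Theorem~\ref{knuthrho}) pins down $\rho(u)$ uniquely from the pair $(P(u), Q(u))$ in this class. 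Combining the constancy of $\widetilde{\rho}((w')^{-1} w_P)$ and $\widetilde{\rho}((w'')^{-1} w_P)$ from Corollary~\ref{corconst} with the inverse-symmetry identity $\widetilde{\rho}(u) = -\widetilde{\rho}(u^{-1})^{\text{s.}\rev}$ of Proposition~\ref{propinvAMBC} then yields $\widetilde{\rho}(u) = -\widetilde{\rho}(w_P w') + \widetilde{\rho}(w_P w'')$ in the $c = 0$ case.

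For general $c$, the plan is to exploit the recursive decomposition $v = w^{(k_1)}\phi^{k_1}(\cdots w^{(k_\varepsilon)}\phi^{k_\varepsilon}(w'')\cdots)$ from Corollary~\ref{Cor:Fi} and track $\Phi$ through iterated applications of Lemma~\ref{commonrotation} (relating $F^{\mathbb{1}_k}$ on both sides to a rotation $\phi^k$ of the relative position) and Lemma~\ref{rotationAMBC} (giving the explicit effect of rotation on the triple $(P, Q, \rho)$). The extra correction $-(\lfloor|c|/n\rfloor l + \delta_1^{\overline{|c|}}(\overline{(w'')^{-1}}))\mathbb{1} + c^{\dom}$ should emerge from three separate contributions: the $\delta$-terms count entries of $P$ and $Q$ that wrap past $n$ under the successive rotations $\phi^{k_i}$ (per Lemma~\ref{rotationAMBC}); the cumulative drift from the $w^{(k_i)}$-factors produces the $c^{\dom}$ term; and the complete $\phi^n$-cycles absorbed contribute the $\lfloor|c|/n\rfloor l$ term. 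The main obstacle is the bookkeeping required to verify these contributions combine precisely to the closed form in the statement; this is done by induction on $\varepsilon$, evaluating $\Phi$ at each intermediate step via Lemma~\ref{lemAMBC}, with the $S_m$-monodromy on the components of $Y_v$ confirming that $\Phi(u)$ depends on $c$ only through $|c|$ and $c^{\dom}$.
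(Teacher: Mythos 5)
Your setup is correct and aligns with the paper: write $u=(w')^{-1}w_P v$ with $v$ the basepoint of $F^c(Y_{w''})$ via Theorem~\ref{lem:relativepositionforfundbox} and Theorem~\ref{lem:allcomp}, and read off $P(u)=\overline{(w')^{-1}}(T^\lambda)$ from $(w')^{-1}\cdot R_{T^\lambda}=R_{\overline{(w')^{-1}}(T^\lambda)}$. The identification of $Q(u)$ and the rough attribution of the three pieces of $\tilde\rho$ to the recursive factors are also in the right spirit.

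However, there are two genuine problems. First, for the general-$c$ induction you propose to use Lemma~\ref{commonrotation}, which reads $r(F^{\mathbb{1}_k}(Y_{w'}),F^c(Y_{w''}))=\phi^k\bigl(r(Y_{w'},F^{c-\mathbb{1}_k}(Y_{w''}))\bigr)$. Peeling a twist off $F^c(Y_{w''})$ this way necessarily introduces the compensating twist $F^{\mathbb{1}_k}$ on the \emph{first} component, which then is no longer of the form $Y_{w'''}$ with $w'''\in\mfF$ (unless $k=m$), so you have stepped outside the hypotheses of the proposition you are trying to prove. This is precisely why the paper reserves Lemma~\ref{commonrotation} (together with Lemma~\ref{rotationAMBC}) for the reduction of Theorem~\ref{mainthmcal} \emph{to} Proposition~\ref{proppremain}, and instead proves Proposition~\ref{proppremain} by a direct computation: writing out the window notation of each $(u^{(r)})^{-1}$ in the chain $w=w^{(k_1)}\phi^{k_1}(\cdots w^{(k_\varepsilon)}\phi^{k_\varepsilon}(w'')\cdots)$, applying the explicit $\rho$-formula from Lemma~\ref{lemAMBC} at each step, and summing; the symmetrized offset constants $s(\cdot)$ are likewise computed explicitly from the rows, and all terms are assembled in closed form. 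Second, your base-case argument for $\rho$ is underspecified. Knuth-equivalence of the $Q$'s plus triviality of the monodromy group only tells you $\rho(u)$ is \emph{uniquely determined} within the right Knuth class by $(P(u),Q(u))$; it does not tell you its value, and ``combining constancy with inverse-symmetry'' does not, by itself, produce $\widetilde\rho(u)=-\widetilde\rho(w_Pw')+\widetilde\rho(w_Pw'')$. What you actually need is the additivity $\rho\bigl((w')^{-1}w_0^\lambda w\bigr)=\rho\bigl((w')^{-1}w_0^\lambda\bigr)+\rho\bigl(w_0^\lambda w\bigr)$, which the paper extracts from Theorem~\ref{thmknuth} together with the parallel-move argument carried out in the proof of Proposition~\ref{proprcellstr}; only then do Corollary~\ref{corconst} and Proposition~\ref{propinvAMBC} finish the $c=0$ case. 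Without these two ingredients the proposal does not go through.
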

\begin{proof}
	Let $c'_1\ge c'_2\ge\ldots\ge c'_m$ be the weakly decreasing rearrangement of $c_1,\ldots,c_m$, and $d_i=c'_i-c'_{i+1}$ for $i\in[m-1]$ and denote $\{k_1,\ldots,k_\varepsilon\}=\{1^{d_1},2^{d_2},\ldots,(m-1)^{d_{m-1}}\}$. So $k_1+\ldots+k_\varepsilon=|c|.$
	
	From Theorem \ref{lem:relativepositionforfundbox}, we know $r(Y_{w'},F^c(Y_{w''}))=(w')^{-1}w_0^\lambda w$ where $w\in w_0^\lambda{\cdot}R_{w_0^\lambda}$ such that $F^c(Y_{w''})\subset Y_w$. And from the proof of Proposition \ref{proprcellstr}, we have $(w')^{-1}w_0^\lambda w\sim_{\LKC}w_0^\lambda w$, $(w')^{-1}w_0^\lambda w$ and $(w')^{-1}w_0^\lambda$ are in the same right cell, so we know by Lemma \ref{lemAMBC} and Theorem \ref{thmknuth}:
	\begin{align*}
	& P((w')^{-1}w_0^\lambda w)=P((w')^{-1}w_0^\lambda)=\overline{(w')^{-1}}(T^\lambda),\\
	& Q((w')^{-1}w_0^\lambda w)=Q(w_0^\lambda w)=\overline{w^{-1}}(T^\lambda),\\
	& \rho((w')^{-1}w_0^\lambda w)=\rho((w')^{-1}w_0^\lambda )+\rho(w_0^\lambda w).
	\end{align*}
	 Since $w=w^{(k_1)}\phi^{k_1}(w^{(k_2)}\phi^{k_2}(\cdots w^{(k_\varepsilon)}\phi^{k_\varepsilon}(w'')\cdots))$, $$Q((w')^{-1}w_0^\lambda w)=\overline{w^{-1}}(T^\lambda)=\overline{(w'')^{-1}}(T^\lambda)+k_1+\ldots+k_s=\overline{(w'')^{-1}}(T^\lambda)+|c|.$$
	From Proposition \ref{propinvAMBC}, it suffices to show the following claim:
	\begin{equation}\label{eqmain}
	\tilde{\rho}(w^{-1}w_0^\lambda)-\tilde{\rho}((w'')^{-1}w_0^\lambda)=\left(\left\lfloor\frac{|c|}{n}\right\rfloor l+\delta^{\overline{|c|}}_1\left(\overline{(w'')^{-1}}\right)\right)\mathbb{1}+(-c)^{\dom}.
	\end{equation}
	
	Denote $u^{(\varepsilon)}=w''$ and $u^{(r)}=w^{(k_{r+1})}\phi^{k_{r+1}}(u^{(r+1)})$ for $r\in[0,\varepsilon-1]$. 
	
	Since $u^{(r)}\in w_0^\lambda{\cdot}R_{w_0^\lambda}$, we have $\rho_i((u^{(r)})^{-1}w_0^\lambda)=\sum_{j=0}^{l-1}\left\lceil\frac{(u^{(r)})^{-1}(i+jn)}{n}\right\rceil-l$ for any $r\in[0,\varepsilon]$ and $i\in[m]$.
	Explicitly, $(u^{(r)})^{-1}$ equals
	\begin{align*}
	\left[\right. & (u^{(r+1)})^{-1}(1+(l-1)m)-n+k,\ldots,(u^{(r+1)})^{-1}(k+(l-1)m)-n+k,\\
	&\qquad\qquad\qquad\qquad\qquad\qquad\qquad (u^{(r+1)})^{-1}(k+1)+k,\ldots, (u^{(r+1)})^{-1}(m)+k,\\
	& (u^{(r+1)})^{-1}(1)+k,\ldots,(u^{(r+1)})^{-1}(k)+k,\\
	&\qquad\qquad\qquad\qquad\qquad\quad\; (u^{(r+1)})^{-1}(k+1+m)+k,\ldots,(u^{(r+1)})^{-1}(2m)+k,\\
	&\ldots\\
	& (u^{(r+1)})^{-1}(1+(l-2)m)+k,\ldots,(u^{(r+1)})^{-1}(k+(l-2)m)+k,\\
	&\qquad\qquad\qquad\qquad\quad\quad (u^{(r+1)})^{-1}(k+1+(l-1)m)+k,\ldots,(u^{(r+1)})^{-1}(n)+k\left.\right].
	\end{align*}
Therefore,
	$$
	\rho_i((u^{(r)})^{-1}w_0^\lambda)=\rho_i((u^{(r+1)})^{-1}w_0^\lambda)-(\mathbb{1}_{k_{r+1}})_i+\delta_1^{k_{r+1}}\left(\overline{(u^{(r+1)})^{-1}}\right).
	$$
	Summing over $r\in[0,\varepsilon-1]$, we obtain:
\begin{equation}\label{eqrho}
\begin{split}
\rho(w^{-1}w_0^\lambda)=& \rho((w'')^{-1}w_0^\lambda)-\sum_{r=1}^\varepsilon\mathbb{1}_{k_r}+\sum_{r=1}^\varepsilon\delta^{k_r}\left(\overline{(u^{(r)})^{-1}}\right)\\
= & \rho((w'')^{-1}w_0^\lambda)-\sum_{r=1}^\varepsilon\mathbb{1}_{k_r}+\delta^{\overline{|c|}}\left(\overline{(w'')^{-1}}\right)+\left\lfloor\frac{|c|}{n}\right\rfloor l\mathbb{1}.
\end{split}
\end{equation}
Note that $\overline{|c|}$ in the $\delta$ function is taken to be in $[0,n-1]$.

Now we consider the entries of the symmetrized offset constant vectors.
\begin{equation}\label{eqs1}
\left(s\left(\overline{w^{-1}}(T^\lambda)+|c|\right)\right)_i=\sum_{j=0}^{l-1}\left(\left\lceil\frac{w^{-1}(i+jm)}{n}\right\rceil{-}\left\lceil\frac{w^{-1}(1+jm)}{n}\right\rceil\right){-}(d_1+\ldots+d_{i-1}),
\end{equation}
\begin{equation}\label{eqs2}
\left(s\left(\overline{(w'')^{-1}}(T^\lambda)+|c|\right)\right)_i=\sum_{j=0}^{l-1}\left(\left\lceil\frac{(w'')^{-1}(i+jm)}{n}\right\rceil{-}\left\lceil\frac{(w'')^{-1}(1+jm)}{n}\right\rceil\right).
\end{equation}
Also from the explicit form of $(u^{(r)})^{-1}$, we have:
$$\sum_{j=0}^{l-1}\left\lceil\frac{(u^{(r)})^{-1}(i+jm)}{n}\right\rceil=\sum_{j=0}^{l-1}\left\lceil\frac{(u^{(r+1)})^{-1}(i+jm)}{n}\right\rceil-(\mathbb{1}_{k_{r+1}})_i+\delta_i^{k_{r+1}}\left(\overline{(u^{(r+1)})^{-1}}\right).$$
So
\begin{align*}
& \sum_{j=0}^{l-1}\left(\left\lceil\frac{(u^{(r)})^{-1}(i+jm)}{n}\right\rceil-\left\lceil\frac{(u^{(r)})^{-1}(1+jm)}{n}\right\rceil\right)\\
=& \sum_{j=0}^{l-1}\left(\left\lceil\frac{(u^{(r+1)})^{-1}(i+jm)}{n}\right\rceil-\left\lceil\frac{(u^{(r+1)})^{-1}(1+jm)}{n}\right\rceil\right)-(\mathbb{1}_{k_{r+1}})_i+\delta_i^{k_{r+1}}\left(\overline{(u^{(r+1)})^{-1}}\right)\\
& +1-\delta_1^{k_{r+1}}\left(\overline{(u^{(r+1)})^{-1}}\right).
\end{align*}
Summing over $r\in[0,\varepsilon-1]$ again and rearranging terms, we get
\begin{equation}\label{eqceilcal}
\begin{split}
& \sum_{j=0}^{l-1}\left(\left\lceil\frac{w^{-1}(i{+}jm)}{n}\right\rceil{-}\left\lceil\frac{w^{-1}(1{+}jm)}{n}\right\rceil\right){-}\sum_{j=0}^{l-1}\left(\left\lceil\frac{(w'')^{-1}(i{+}jm)}{n}\right\rceil{-}\left\lceil\frac{(w'')^{-1}(1{+}jm)}{n}\right\rceil\right)\\
=&-\sum_{r=1}^\varepsilon(\mathbb{1}_{k_{r}})_i+\delta_i^{\overline{k_1+\ldots+k_\varepsilon}}\left(\overline{(w'')^{-1}}\right) +\left\lfloor\frac{k_1+\ldots+k_\varepsilon}{n}\right\rfloor l+\varepsilon-\delta_1^{\overline{k_1+\ldots+k_\varepsilon}}\left(\overline{(w'')^{-1}}\right).
\end{split}
\end{equation}
Equations \eqref{eqs1}, \eqref{eqs2}, \eqref{eqceilcal} together give
\begin{equation}\label{eqsdiff}
\begin{split}
& s(P(w^{-1}w_0^\lambda))-s(P((w'')^{-1}w_0^\lambda))\\
=& -\sum_{r=1}^\varepsilon\mathbb{1}_{k_r}+\delta^{\overline{k_1+\ldots+k_\varepsilon}}\left(\overline{(w'')^{-1}}\right)+\left(s-\delta_1^{\overline{k_1+\ldots+k_\varepsilon}}\left(\overline{(w'')^{-1}}\right)\right)\mathbb{1}-\begin{pmatrix}\displaystyle 0 \\ d_1\\d_1{+}d_2\\\ldots\\d_1{+}\ldots{+}d_{m-1} \end{pmatrix}.
\end{split}
\end{equation}
By definition of the $d_i$'s, we have
\begin{equation}\label{eqd}
\begin{pmatrix}\displaystyle 0 \\ d_1\\d_1{+}d_2\\\ldots\\d_1{+}\ldots{+}d_{m-1} \end{pmatrix}=\begin{pmatrix}\displaystyle c'_1{-}c'_1 \\ c'_1{-}c'_2\\c'_1{-}c'_3 \\\ldots\\c'_1{-}c'_m \end{pmatrix}=c'_1+(-c)^{\dom}=s+(-c)^{\dom}.
\end{equation}
Plugging \eqref{eqd} into \eqref{eqsdiff} and subtracting \eqref{eqsdiff} from \eqref{eqrho}, we arrived at the claim \eqref{eqmain}.
\end{proof}
We illustrate Proposition \ref{proppremain} using the following example.
\begin{Example}
	Let $n=6$, $\lambda=(2,2,2)$ and take $C_1=Y_{id}$ and $C_2=F_i^5F_j^2(Y_{w''})$ where $i,j\in[3],i\ne  j$ and $(w'')^{-1}=[0,1,4,3,5,8]\in\mfF^{-1}$ (which can be read from Figure \ref{LKC222} in the appendix). Then we know $\{k_1,\ldots,k_5\}=\{1,1,1,2,2\}$ and $C_2\subset Y_w$ where $$w=w^{(2)}\phi^2(w^{(2)}\phi^2(w^{(1)}\phi(w^{(1)}\phi(w^{(1)}\phi(w''))))).$$
	Direct computation gives $w^{-1}=[-8,2,11,-5,6,15]$. And we know $x\coloneqq r(C_1,C_2)=w_0^\lambda w=[12,2,-8,15,-5,5]$ and
	\begin{equation*}
	P(x)=\raisebox{12pt}{\begin{ytableau} \overline{1} & \overline{4}\\\overline{2} & \overline{5}\\\overline{3} & \overline{6}\end{ytableau}}\;,\;
	 Q(x)=\raisebox{12pt}{\begin{ytableau}\overline{1} & \overline{4}\\\overline{2} & \overline{6}\\\overline{3} & \overline{5}\end{ytableau}}\;,\;
	\rho(x)=\begin{pmatrix}\displaystyle -2 \\ 0\\2 \end{pmatrix}.
	\end{equation*}
	Now we use the formula in Proposition \ref{proppremain} to reproduce $\Phi(x)$.
	\begin{align*}
	P(x)&=\overline{(w')^{-1}}(T^\lambda)=T^\lambda,\\
	Q(x)&=\overline{(w'')^{-1}}(T^\lambda)+c_1+c_2+c_3=\raisebox{12pt}{\begin{ytableau}\overline{3} & \overline{6}\\\overline{1} & \overline{5}\\\overline{2} & \overline{4}\end{ytableau}}+7=\raisebox{12pt}{\begin{ytableau}\overline{1} & \overline{4}\\\overline{2} & \overline{6}\\\overline{3} & \overline{5}\end{ytableau}}\;,\\
	\rho(x)&= s\left(\,\raisebox{12pt}{\begin{ytableau}\overline{1} & \overline{4}\\\overline{2} & \overline{5}\\\overline{3} & \overline{6}\end{ytableau}}\,\right)-s\left(\,\raisebox{12pt}{\begin{ytableau}\overline{1} & \overline{4}\\\overline{2} & \overline{6}\\\overline{3} & \overline{5}\end{ytableau}}\,\right)-\tilde{\rho}(w_0^\lambda w')+\tilde{\rho}(w_0^\lambda w'')\\
	& \quad-\left(\left\lfloor\frac{c_1+c_2+c_3}{n}\right\rfloor l+\delta^{\overline{c_1+c_2+c_3}}_1\left(\overline{(w'')^{-1}}\right)\right)\mathbb{1}+c^{\dom}\\
	&= \begin{pmatrix}\displaystyle 0 \\ 0\\0 \end{pmatrix}-\begin{pmatrix}\displaystyle 0 \\ 0\\1 \end{pmatrix}-\begin{pmatrix}\displaystyle 0 \\ 0\\0 \end{pmatrix}+\begin{pmatrix}\displaystyle 1 \\ 1\\1 \end{pmatrix}-(2+1)\begin{pmatrix}\displaystyle 1 \\ 1\\1 \end{pmatrix}+\begin{pmatrix}\displaystyle 0 \\ 2\\5 \end{pmatrix}\\
	&= \begin{pmatrix}\displaystyle -2 \\ 0\\2 \end{pmatrix}.
	\end{align*}
\end{Example}

\begin{Theorem}\label{mainthmcal}
	Let $w',w''\in \mfF$, $c_1\ge 0,\ldots,c_m\ge 0$, $\min\{c_1,\ldots,c_m\}=0$, and $k\in[0,m-1]$. Then,
	$$\Phi\left(r(F^{\mathbb{1}_k}(Y_{w'}),F^c(Y_{w''}))\right)=\left(\overline{(w')^{-1}}(T^\lambda)+k,\overline{(w'')^{-1}}(T^\lambda)+|c|,\rho\right),$$where 
	\begin{align*}
	\rho=&\, \tilde{\rho}+s\left(\overline{(w')^{-1}}(T^\lambda)+k\right)-s\left(\overline{(w'')^{-1}}(T^\lambda)+|c|\right),\\
	\tilde{\rho}= & -\tilde{\rho}(w_0^\lambda w')+\tilde{\rho}(w_0^\lambda w'')-\left(\left\lfloor\frac{|c|}{n}\right\rfloor l+\delta^{\overline{|c|}}_1\left(\overline{(w'')^{-1}}\right)-\delta^k_1(\overline{(w')^{-1}})\right)\mathbb{1} +(c-\mathbb{1}_k)^{\dom},
	\end{align*}
and the $\delta$ function is defined to be $$ \delta^\alpha_i\left(\overline{u}\right)=\sum_{j=0}^{l-1}\mathbb{1}_{[n-\alpha+1,n]}\left(\overline{u}(i+jm)\right),\quad \alpha\in[0,n-1],i\in[m].$$
\end{Theorem}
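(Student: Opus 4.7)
The plan is to reduce Theorem \ref{mainthmcal} to Proposition \ref{proppremain} by peeling off the rotation $F^{\mathbb{1}_k}$ on the first factor. Applying Lemma \ref{commonrotation} (with the tuple $c$) yields
\begin{equation*}
    r(F^{\mathbb{1}_k}(Y_{w'}), F^c(Y_{w''})) \;=\; \phi^k\bigl(r(Y_{w'}, F^{c-\mathbb{1}_k}(Y_{w''}))\bigr),
\end{equation*}
so it suffices to compute the AMBC of the inner relative position $u$ and then apply Lemma \ref{rotationAMBC} to track the effect of $\phi^k$.

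The inner tuple $c - \mathbb{1}_k$ may fall outside $\mathbb{N}^m$ with minimum zero, so I first put $F^{c-\mathbb{1}_k}(Y_{w''})$ into standard form. Write $c - \mathbb{1}_k = c^* + b\mathbb{1}$ with $c^* \in \mathbb{N}^m$, $\min c^* = 0$, and $b \in \mathbb{Z}$. By the Remark after Theorem \ref{lem:allcomp} (iterated for either sign of $b$, using Corollary \ref{corphim} to stay inside $\mathfrak{F}$), we have $F^{b\mathbb{1}}(Y_v) = Y_{\phi^{bm}(v)}$ for $v \in \mathfrak{F}$; since the $F_i$ commute this gives
\begin{equation*}
    F^{c-\mathbb{1}_k}(Y_{w''}) \;=\; F^{c^*}\bigl(Y_{\phi^{bm}(w'')}\bigr),
\end{equation*}
with $\phi^{bm}(w'') \in \mathfrak{F}$. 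Proposition \ref{proppremain} then supplies the triple $\Phi(u)$.

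It remains to combine this with the rotation correction from Lemma \ref{rotationAMBC}. The $P$-identity is immediate. For $Q$, one uses $|c^*| = |c| - k - bm$ and the tabloid identity $\overline{\phi^{bm}((w'')^{-1})}(T^\lambda) = \overline{(w'')^{-1}}(T^\lambda) + bm$ (a consequence of Lemma \ref{rotationAMBC} applied to $w_0^\lambda w''$), so that $Q(\phi^k(u)) = \overline{(w'')^{-1}}(T^\lambda) + |c|$. For $\rho$, substituting Proposition \ref{proppremain}'s formula with $c \leftarrow c^*$, $w'' \leftarrow \phi^{bm}(w'')$, combining with the $\delta^k(P(u)) - \delta^k(Q(u))$ rotation correction, and simplifying the symmetrized offset constants using Corollary \ref{corconst} (which guarantees $\tilde\rho(w_0^\lambda w')$ and $\tilde\rho(w_0^\lambda w'')$ are constant vectors for $w', w'' \in \mathfrak{F}$) should produce the stated $\tilde\rho$, in which $(c-\mathbb{1}_k)^{\dom}$ replaces $c^{\dom}$ and the new scalar correction $\delta^k_1(\overline{(w')^{-1}})\mathbb{1}$ appears.

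The main obstacle is this last bookkeeping step: verifying that the combined rotation correction
\begin{equation*}
    \delta^k(P(u)) - \delta^k(Q(u)) - \bigl[s(P(u)+k) - s(P(u))\bigr] + \bigl[s(Q(u)+k) - s(Q(u))\bigr]
\end{equation*}
collapses to a multiple of $\mathbb{1}$ whose coefficient involves only the first-row values $\delta^k_1$, and that the $b$-shift from the normalization cancels against the corresponding shifts in $|c^*|$, $\tilde\rho(w_0^\lambda \phi^{bm}(w''))$, and $(c^*)^{\dom}$ to reproduce $(c - \mathbb{1}_k)^{\dom}$ exactly. This reduction should exploit the rectangular structure $\lambda = (l^m)$ and the very specific periodic form of $\overline{(w')^{-1}}(T^\lambda)$ inherited from $w' \in \mathfrak{F}$.
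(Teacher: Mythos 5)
Your overall strategy matches the paper's: reduce via Lemma \ref{commonrotation} to the relative position $z = r(Y_{w'}, F^{c-\mathbb{1}_k}(Y_{w''}))$, normalize the exponent so that Proposition \ref{proppremain} applies, then translate back through Lemma \ref{rotationAMBC}. Your normalization $c - \mathbb{1}_k = c^* + b\mathbb{1}$ with $c^* \in \mathbb{N}^m$, $\min c^* = 0$, together with $F^{b\mathbb{1}}(Y_v) = Y_{\phi^{bm}(v)}$, is a clean way of unifying what the paper handles as two cases: Case 1 ($b = 0$) when $c - \mathbb{1}_k$ is already in standard form, and Case 2 ($b = -1$) where the paper rewrites $F^{c-\mathbb{1}_k}(Y_{w''}) = F^{c+\mathbb{1}-\mathbb{1}_k}(Y_{\phi^{-m}(w'')})$. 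Your claimed tabloid identity $\overline{\phi^{bm}((w'')^{-1})}(T^\lambda) = \overline{(w'')^{-1}}(T^\lambda) + bm$ is also correct, since $\phi^m(w_0^\lambda) = w_0^\lambda$ for rectangular $\lambda$, so $\phi^{bm}((w'')^{-1})w_0^\lambda = \phi^{bm}((w'')^{-1}w_0^\lambda)$ and one applies Lemma \ref{rotationAMBC} and Corollary \ref{corconst}.

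However, the proposal stops short of being a proof. You explicitly flag the substance of the argument — the verification that
\begin{equation*}
    \delta^k(P(z)) - \delta^k(Q(z)) - \bigl[s(P(z)+k) - s(P(z))\bigr] + \bigl[s(Q(z)+k) - s(Q(z))\bigr]
\end{equation*}
collapses to a scalar multiple of $\mathbb{1}$, and that the $b$-shift cancels in the $\tilde\rho$ terms — as ``the main obstacle,'' and you say the combination ``should produce the stated $\tilde\rho$'' without doing the computation. But this \emph{is} the proof. The paper devotes the bulk of its argument to exactly these verifications: the identity $s(X) = s(X+k) + \delta^k_1(X)\mathbb{1} - \delta^k(X)$ for the relevant tabloids $X$ (equations \eqref{eq2}, \eqref{eq3}, \eqref{eq5}), and the careful tracking of how $\tilde\rho(w_0^\lambda\phi^{-m}(w''))$ relates to $\tilde\rho(w_0^\lambda w'')$ and how the floor function $\lfloor |c|/n\rfloor$ and the $\delta$-terms absorb the $b$-shift (equations \eqref{eq7}, \eqref{eq8}). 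Without these computations, the $\rho$-formula in the theorem statement is unverified, so the proposal is an outline rather than a proof. To complete it you must establish the offset-constant identity (which relies on the periodic structure of tabloids of shape $(l^m)$ inherited from $w' \in \mathfrak{F}$) and carry through the $b$-dependent bookkeeping as the paper does.
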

\begin{proof}
	Denote $x\coloneqq r(F^{\mathbb{1}_k}(Y_{w'}),F^c(Y_{w''}))$ and $ z\coloneqq r(Y_{w'},F^{c-\mathbb{1}_k}(Y_{w''}))$. From Lemma \ref{commonrotation}, we know that $x=\phi^k(z)$. 
	
	Case 1: $\min\{c_1,\ldots,c_k\}\ge 1$.
	
	In this case we could apply Proposition \ref{proppremain} directly. Take $c'_1\ge c'_2\ge\ldots\ge c'_m$ be the weakly decreasing rearrangement of $c_1-1,\ldots,c_k-1,c_{k+1},\ldots,c_m$, and $d_i=c'_i-c'_{i+1}$ for $i\in[m-1]$ and denote $\{k_1,\ldots,k_\varepsilon\}=\{1^{d_1},2^{d_2},\ldots,(m-1)^{d_{m-1}}\}$. Note here $k_1+\ldots+k_\varepsilon=c_1+\ldots c_m-k.$ Then $F^{c-\mathbb{1}_k}(Y_{w''})\subset Y_w$ where $w=w^{(k_1)}\phi^{k_1}(w^{(k_2)}\phi^{k_2}(\cdots w^{(k_\varepsilon)}\phi^{k_\varepsilon}(w'')\cdots))$.

	 Hence by Lemma \ref{rotationAMBC}, we have:
	\begin{align*}
	& P(x)=P(z)+k=\overline{(w')^{-1}}(T^\lambda)+k,\\
	& Q(x)=Q(z)+k=\overline{(w'')^{-1}}(T^\lambda)+(c_1-1)+\ldots+(c_k-1)+c_{k+1}+\ldots+c_m+k\\
	&\qquad=\overline{(w'')^{-1}}(T^\lambda)+|c|,
	\end{align*}
	and
	\begin{equation}\label{eq1}
	\begin{split}
	& \rho(x)=\rho(z)+\delta^k\left(\overline{(w')^{-1}}\right)-\delta^k\left(\overline{(w'')^{-1}+k_1+\ldots+k_s}\right)\\
	&\quad\;\;\;=s\left(\overline{(w')^{-1}}(T^\lambda)\right)-s\left(\overline{(w'')^{-1}}(T^\lambda)+|c|-k\right)-\tilde{\rho}(w_0^\lambda w')+\tilde{\rho}(w_0^\lambda w'')\\
	&\qquad\quad-\left(\left\lfloor\frac{|c|-k}{n}\right\rfloor l+\delta^{\overline{|c|-k}}_1\left(\overline{(w'')^{-1}}\right)\right)\mathbb{1}+(c-\mathbb{1}_k)^{\dom}\\
	&\qquad\quad+\delta^k\left(\overline{(w')^{-1}}\right)-\delta^k\left(\overline{(w'')^{-1}+|c|-k}\right).
	\end{split}
	\end{equation}
	Since $$s_i\left(\overline{(w')^{-1}}(T^\lambda)+k\right)=s_i(P(w^{(k)}\phi^k(w')))-(\mathbb{1}-\mathbb{1}_k),$$
	we have
	\begin{equation}\label{eq2}
	s\left(\overline{(w')^{-1}}(T^\lambda)\right)=s\left(\overline{(w')^{-1}}(T^\lambda)+k\right)+\delta^k_1\left(\overline{(w')^{(-1)}}\right)\mathbb{1}-\delta^k\left(\overline{(w')^{(-1)}}\right).
	\end{equation}
	Similarly, there is
	\begin{equation}\label{eq3}
	\begin{split}
	& s\left(\overline{(w'')^{-1}}(T^\lambda)+|c|-k\right)=s\left(\overline{(w'')^{-1}}(T^\lambda)+|c|\right)\\
	& \qquad\qquad\qquad +\delta^k_1\left(\overline{(w'')^{-1}+|c|-k}\right)\mathbb{1}-\delta^k\left(\overline{(w'')^{-1}+|c|-k}\right).
	\end{split}
	\end{equation}
	Equations \eqref{eq1}, \eqref{eq2}, \eqref{eq3} together give
	\begin{align*}
	\tilde{\rho}(x)= & \delta^k_1\left(\overline{(w')^{(-1)}}\right)\mathbb{1}-\delta^k_1\left(\overline{(w'')^{-1}+|c|-k}\right)\mathbb{1} -\left(\left\lfloor\frac{|c|-k}{n}\right\rfloor l+\delta^{\overline{|c|-k}}_1\left(\overline{(w'')^{-1}}\right)\right)\mathbb{1}\\
	& -\tilde{\rho}(w_0^\lambda w')+\tilde{\rho}(w_0^\lambda w'')+(c-\mathbb{1}_k)^{\dom}\\
	= & -\left(\left\lfloor\frac{|c|}{n}\right\rfloor l+\delta^{\overline{|c|}}_1\left(\overline{(w'')^{-1}}\right)-\delta^k_1(\overline{(w')^{-1}})\right)\mathbb{1} -\tilde{\rho}(w_0^\lambda w')+\tilde{\rho}(w_0^\lambda w'')+(c-\mathbb{1}_k)^{\dom}.
	\end{align*}
	
	Case 2: $\min\{c_1,\ldots,c_k\}= 1$.
	
	In this case, we need to use Corollary \ref{corphim} and write the component as $$F^{c-\mathbb{1}_k}(Y_{w''})=F^{c+\mathbb{1}-\mathbb{1}_k}(Y_{\phi^{-m}(w'')}).$$
	Now we could apply Proposition \ref{proppremain}. Denote $y=\phi^{-m}(w'').$ and take $c'_1\ge c'_2\ge\ldots\ge c'_m$ be the weakly decreasing rearrangement of $c_1,\ldots,c_k,c_{k+1}+1,\ldots,c_m+1$, and $d_i=c'_i-c'_{i+1}$ for $i\in[m-1]$ and $\{k_1,\ldots,k_\varepsilon\}=\{1^{d_1},2^{d_2},\ldots,(m-1)^{d_{m-1}}\}$. Here $k_1+\ldots+k_\varepsilon=|c|+m-k.$ Similarly, $F^{c+\mathbb{1}-\mathbb{1}_k}(Y_{y})\subset Y_w$ where $w=w^{(k_1)}\phi^{k_1}(w^{(k_2)}\phi^{k_2}(\cdots w^{(k_\varepsilon)}\phi^{k_\varepsilon}(y)\cdots))$.
	Again by Lemma \ref{rotationAMBC}, we have
	\begin{align*}
	& P(x)=P(z)+k=\overline{(w')^{-1}}(T^\lambda)+k,\\
	& Q(x)=Q(z)+k=\overline{y^{-1}}(T^\lambda)+c_1+\ldots+c_k+(c_{k+1}+1)+\ldots+(c_m+1)+k\\
	&\qquad=\left(\overline{y^{-1}}(T^\lambda)+m\right)+|c|=\overline{(w'')^{-1}}(T^\lambda)+|c|,
	\end{align*}
	and
	\begin{equation}\label{eq4}
	\begin{split}
	& \rho(x)=\rho(z)+\delta^k\left(\overline{(w')^{-1}}\right)-\delta^k\left(\overline{y^{-1}+|c|+m-k}\right)\\
	&\quad\;\;\;=s\left(\overline{(w')^{-1}}(T^\lambda)\right)-s\left(\overline{y^{-1}}(T^\lambda)+|c|+m-k\right)-\tilde{\rho}(w_0^\lambda w')+\tilde{\rho}(w_0^\lambda y)\\
	&\qquad\quad-\left(\left\lfloor\frac{|c|+m-k}{n}\right\rfloor l+\delta^{\overline{|c|+m-k}}_1\left(\overline{y^{-1}}\right)\right)\mathbb{1}\\
	&\qquad\quad+(c+\mathbb{1}-\mathbb{1}_k)^{\dom}+\delta^k\left(\overline{(w')^{-1}}\right)-\delta^k\left(\overline{y^{-1}+|c|+m-k}\right).
	\end{split}
	\end{equation}
	Explicitly, \begin{equation*}
	\begin{split}
	y^{-1}=& \left[\right.(w'')^{-1}(1+m)-m,(w'')^{-1}(2+m)-m,\ldots,(w'')^{-1}(2m)-m,(w'')^{-1}(1+2m)-m,\\
	& \left.\ldots,(w'')^{-1}(n)-m,(w'')^{-1}(1)+n-m,\ldots,(w'')^{-1}(m)+n-m\right].
	\end{split}
	\end{equation*}
	Hence there is
	\begin{equation}\label{eq5}
	\begin{split}
	& s\left(\overline{y^{-1}}(T^\lambda)+|c|+m-k\right) =  s\left(\overline{(w'')^{-1}}(T^\lambda)+|c|-k\right) \\
	= & \;s\left(\overline{(w'')^{-1}}(T^\lambda)+|c|\right)+\delta_1^k\left(\overline{(w'')^{-1}+|c|-k}\right) -\delta^k\left(\overline{(w'')^{-1}+|c|-k}\right).
	\end{split}
	\end{equation}
	Combining equations \eqref{eq2}, \eqref{eq4}, \eqref{eq5}, we obtain
	\begin{equation}\label{eq6}
	\begin{split}
	\tilde{\rho}(x) = &\;-\tilde{\rho}(w_0^\lambda w')+\tilde{\rho}(w_0^\lambda y)+\delta_1^k\left(\overline{(w')^{-1}}\right)\mathbb{1}-\delta_1^k\left(\overline{(w'')^{-1}+|c|-k}\right)\mathbb{1}\\
	&\;-\left(\left\lfloor\frac{|c|+m-k}{n}\right\rfloor l+\delta^{\overline{|c|+m-k}}_1\left(\overline{(w'')^{-1}-m}\right)\right)\mathbb{1}+(c+\mathbb{1}-\mathbb{1}_k)^{\dom}\\
	=& \; -\tilde{\rho}(w_0^\lambda w')+\tilde{\rho}(w_0^\lambda y)+\delta_1^k\left(\overline{(w')^{-1}}\right)\mathbb{1}+(c+\mathbb{1}-\mathbb{1}_k)^{\dom}\\
	& \; -\left(\left\lfloor\frac{|c|+m}{n}\right\rfloor l+\delta^{\overline{|c|+m}}_1\left(\overline{(w'')^{-1}-m}\right)\right)\mathbb{1}.
	\end{split}
	\end{equation}
	Finally,
	\begin{equation}\label{eq7}
	\begin{split}
	\tilde{\rho}(w_0^\lambda y)= & \;-\tilde{\rho}(y^{-1}w_0^\lambda)=-\left(1-l+\sum_{j=0}^{l-1}\left\lceil\frac{(w'')^{-1}(1+jm)-m}{n}\right\rceil\right)\\
	=&\;-\tilde{\rho}((w'')^{-1}w_0^\lambda)-\left(1-\sum_{j=0}^{l-1}\mathbb{1}_{[1,m]}\left(\overline{(w'')^{-1}(1+jm)}\right)\right)\mathbb{1}.
	\end{split}  
	\end{equation}
	Let $\alpha=\overline{|c|+m}\in[0,n-1]$. Then
	\begin{equation}\label{eq8}
	\begin{split}
	& -\sum_{j=0}^{l-1}\mathbb{1}_{[1,m]}\left(\overline{(w'')^{-1}(1+jm)}\right)+\sum_{j=0}^{l-1}\mathbb{1}_{[n-\alpha+1,n]}\left(\overline{(w'')^{-1}(1+jm)}\right)\\
	=&\;\sum_{j=0}^{l-1}\left\{\begin{array}{ll}
	\mathbb{1}_{[n-\alpha+m+1,n]}\left(\overline{(w'')^{-1}(1+jm)}\right), & \alpha\ge m \\
	-\mathbb{1}_{[1,m-\alpha]}\left(\overline{(w'')^{-1}(1+jm)}\right), & \alpha<m \\
	\end{array} \right.\\
	=&\;\sum_{j=0}^{l-1}\left\{\begin{array}{ll}
	\mathbb{1}_{[n-\alpha+m+1,n]}\left(\overline{(w'')^{-1}(1+jm)}\right), & \alpha\ge m \\
	\mathbb{1}-\mathbb{1}_{[m-\alpha+1,n]}\left(\overline{(w'')^{-1}(1+jm)}\right), & \alpha<m \\
	\end{array} \right.
	\end{split}
	\end{equation}
	Combining \eqref{eq6}, \eqref{eq7} and \eqref{eq8} gives the same expression of $\rho(x)$ as in Case 1.
	\end{proof}

\begin{Example}
	Let $n=6,\lambda=(2,2,2)$ and $C_1=F_1F_2(Y_{w'}), C_2=F^2_2F_3^5(Y_{w''})$ where $(w')^{-1}=[-1,2,4,3,6,7]$ and $(w'')^{-1}=[0,1,4,3,5,8]$. It can be checked from Figure \ref{LKC222} in the appendix that $w',w''\in\mfF.$
	
	Then $y^{-1}=\phi^{-m}((w'')^{-1})=\phi^{-3}([0,1,4,3,5,8])=[0,2,5,3,4,7]$ and $$F_1^{-1}F^2_1F_3^5(Y_{w''})=F_2^2F_3^6(Y_y)\subset Y_w$$ where $w=w^{(2)}\phi^2(w^{(2)}\phi^2(w^{(1)}\phi(w^{(1)}\phi(w^{(1)}\phi(w^{(1)}\phi(w''))))))=[-10,4,13,-7,6,5]^{-1}$. So
	$$x\coloneqq r(C_1,C_2)=\phi^2((w')^{-1}w_0^\lambda w)=[15,2,-11,18,-7,4].$$
	Direct calculation by affine matrix ball construction gives
	$$\Phi(x)=\Phi([15,2,-11,18,-7,4])=\left(\;\raisebox{12pt}{\begin{ytableau}\overline{1} & \overline{5}\\\overline{2} & \overline{4}\\\overline{3} & \overline{6}\end{ytableau}}\;,\raisebox{12pt}{\begin{ytableau}\overline{1} & \overline{4}\\\overline{2} & \overline{6}\\\overline{3} & \overline{5}\end{ytableau}}\;,\begin{pmatrix}\displaystyle -3 \\ 0\\3 \end{pmatrix}\right).$$
	Now we use Theorem \ref{mainthmcal} to calculate the same data.
	\begin{align*}
	& P(x)=\overline{(w')^{-1}}(T^\lambda)+k=\raisebox{12pt}{\begin{ytableau}\overline{3} & \overline{5}\\\overline{2} & \overline{6}\\\overline{1} & \overline{4}\end{ytableau}}+2=\raisebox{12pt}{\begin{ytableau}\overline{1} & \overline{5}\\\overline{2} & \overline{4}\\\overline{3} & \overline{6}\end{ytableau}}\;,\\
	& Q(x)=\overline{(w'')^{-1}}(T^\lambda)+c_1+c_2+c_3=\raisebox{12pt}{\begin{ytableau}\overline{3} & \overline{6}\\\overline{1} & \overline{5}\\\overline{2} & \overline{4}\end{ytableau}}+7=\raisebox{12pt}{\begin{ytableau}\overline{1} & \overline{4}\\\overline{2} & \overline{6}\\\overline{3} & \overline{5}\end{ytableau}}\;,\\
	& \rho(x)= s\left(\raisebox{12pt}{\begin{ytableau}\overline{1} & \overline{5}\\\overline{2} & \overline{4}\\\overline{3} & \overline{6}\end{ytableau}}\right)-s\left(\raisebox{12pt}{\begin{ytableau}\overline{1} & \overline{4}\\\overline{2} & \overline{6}\\\overline{3} & \overline{5}\end{ytableau}}\right)-\tilde{\rho}(w_0^\lambda w')+\tilde{\rho}(w_0^\lambda w'')\\
	& \quad\;\;\; +\left(\delta^{k}_1\left(\overline{(w')^{-1}}\right)\right)\mathbb{1}-\left(\left\lfloor\frac{c_1+c_2+c_3}{n}\right\rfloor l+\delta^{\overline{c_1+c_2+c_3}}_1\left(\overline{(w'')^{-1}}\right)\right)\mathbb{1}+(c-\mathbb{1}_k)^{\dom}\\
	& \quad\;\;\;= \begin{pmatrix}\displaystyle 0 \\ 1\\1 \end{pmatrix}-\begin{pmatrix}\displaystyle 0 \\ 0\\1 \end{pmatrix}-\begin{pmatrix}\displaystyle 1 \\ 1\\1 \end{pmatrix}+\begin{pmatrix}\displaystyle 1 \\ 1\\1 \end{pmatrix}+\begin{pmatrix}\displaystyle 1 \\ 1\\1 \end{pmatrix}-(2+1)\begin{pmatrix}\displaystyle 1 \\ 1\\1 \end{pmatrix}+\begin{pmatrix}\displaystyle -1 \\ 1\\5 \end{pmatrix}\\
	& \quad\;\;\;= \begin{pmatrix}\displaystyle -3 \\ 0\\3 \end{pmatrix}.
	\end{align*}
\end{Example}

The main theorem of the paper is as follows:

\begin{Theorem}\label{mainthm}
	For $\lambda=(l^m)$, we have the following commutative diagrams:
	\begin{equation}
	\begin{tikzcd}
	\Irr(\mathcal{F}l_N)/\Lambda\arrow[dd,"\theta"] && \Irr(\mathcal{F}l_N)\times_\Lambda \Irr(\mathcal{F}l_N)\arrow[ll, "pr_i"']\arrow[rr, "r"]\arrow[dd,"\Theta"] && \widetilde{S_n}\\
	&&&&\\
	T(\lambda) && \Omega_{\lambda}\arrow[ll,"pr_i"']\arrow[uurr,"\Psi"'] &&
	\end{tikzcd}
	\end{equation}
	where 
	\begin{align*}
	&\theta(F^c(Y_w))=\overline{\phi^{m\alpha}(w^{-1})}(T^\lambda)+\beta, \text{ here }|c|=m\alpha+\beta,\alpha\in\mbZ,\beta\in[0,m-1],\\
	& \Omega_\lambda=\left\{(P,Q,\rho)\in\Omega\,\left|\, P,Q\in T(\lambda),\rho\in\mbZ^m\right.\right\},\\
	& \Theta\left(F^{\mathbb{1}_k}(Y_{w'}),F^c(Y_{w''})\right)=\left(\overline{(w')^{-1}}(T^\lambda)+k,\overline{(w'')^{-1}}(T^\lambda)+|c|,\rho\right)\text{ for } w',w''\in\mfF,\\
	& \rho= s\left(\overline{(w')^{-1}}(T^\lambda)+k\right)-s\left(\overline{(w'')^{-1}}(T^\lambda)+|c|\right)+ -\tilde{\rho}(w_0^\lambda w')+\tilde{\rho}(w_0^\lambda w'')\\
	&\qquad -\left(\left\lfloor\frac{|c|}{n}\right\rfloor l+\delta^{\overline{|c|}}_1\left(\overline{(w'')^{-1}}\right)-\delta^k_1(\overline{(w')^{-1}})\right)\mathbb{1} +(c-\mathbb{1}_k)^{\rev},\\
	& \delta^\alpha_i\left(\overline{u}\right)=\sum_{j=0}^{l-1}\mathbb{1}_{[n-\alpha+1,n]}\left(\overline{u}(i+jm)\right),\quad \alpha\in[0,n-1],i\in[m].
	\end{align*}
	And $pr_i$, $i=1,2$ are the natural projection maps onto the first and second component. Moreover, these maps satisfy:
	\begin{enumerate}
		\item The relative position $r$ maps onto the two-sided cell $C_\lambda$;
		\item $\theta$ and $\Theta$ are bijections;
		\item Given any $C\in\Irr(\mcF l_N)/\Lambda$, then $r(pr_1^{-1}(C))$ is a right cell in $C_\lambda$, and $r(pr_2^{-1}(C))$ is a left cell in $C_\lambda$.
	\end{enumerate}	 
\end{Theorem}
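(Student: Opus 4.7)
The plan is to assemble the theorem from Theorem \ref{mainthmcal} together with the combinatorial results of Section 2 (notably Proposition \ref{Abij}, Lemma \ref{equidist}, and Theorem \ref{cell:thm}) and the geometric description in Theorem \ref{lem:allcomp}. First I would establish commutativity of the diagram. Given a canonical representative $(F^{\mathbb{1}_k}(Y_{w'}), F^c(Y_{w''}))$ of a pair of components (with $w',w''\in\mfF$, $k\in[0,m-1]$, $c\in\mathbb{N}^m$, $\min c=0$), Theorem \ref{mainthmcal} asserts that $\Phi(r(F^{\mathbb{1}_k}(Y_{w'}), F^c(Y_{w''})))$ is exactly the triple $\Theta(F^{\mathbb{1}_k}(Y_{w'}), F^c(Y_{w''}))$. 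Composing with $\Psi$ and invoking $\Psi\circ\Phi=\mathrm{id}$ from Theorem \ref{thminverseAMBC} yields $r=\Psi\circ\Theta$. Compatibility with the projections $pr_i$ is then a direct inspection of the formula: the $P$-coordinate of $\Theta$ depends only on $(w',k)$ and matches $\theta$ evaluated on the first component (taking $\alpha=0,\beta=k$), and analogously for the $Q$-coordinate via Lemma \ref{rotationAMBC}, which provides the identity $\overline{\phi^{m\alpha}(u^{-1})}(T^\lambda)+\beta=\overline{u^{-1}}(T^\lambda)+m\alpha+\beta$.

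Next I would prove $\theta$ is a bijection $\Irr(\mcF l_N)/\Lambda \to T(\lambda)$. Theorem \ref{lem:allcomp}(2) labels each component uniquely by $(w,c)\in\mfF\times\{c\in\mathbb{N}^m:\min c=0\}$, while the identity $F^{\mathbb{1}}(Y_w)=Y_{\phi^m(w)}$ from the remark after Theorem \ref{lem:allcomp} shifts $|c|$ by $\pm m$ while replacing $w$ by $\phi^{\pm m}(w)\in\mfF$ (Corollary \ref{corphim}); iterating, each $\Lambda$-orbit has a unique representative of the form $F^{\mathbb{1}_k}(Y_{w'})$ with $w'\in\mfF$ and $k\in[0,m-1]$. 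Proposition \ref{Abij} bijects $\mfF$ with $\{X\in T(\lambda):m\mid\charge(X)\}$ via $w\mapsto\overline{w^{-1}}(T^\lambda)$, and Lemma \ref{equidist} shows that adding $k\in[0,m-1]$ to a tabloid shifts $\charge$ by $-k\pmod m$; combining these gives $\theta$ as a bijection onto $T(\lambda)$.

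For the bijectivity of $\Theta$, given any triple $(P,Q,\rho)\in\Omega_\lambda$ I would invert it as follows. Lemma \ref{equidist} provides a unique $k\in[0,m-1]$ with $m\mid\charge(P-k)$, and Proposition \ref{Abij} then yields a unique $w'\in\mfF$ with $P-k=\overline{(w')^{-1}}(T^\lambda)$; the same procedure applied to $Q$ determines $w''\in\mfF$ together with the residue of $|c|\pmod m$. The explicit $\rho$-formula of Theorem \ref{mainthmcal} then pins down $|c|$ (through the floor and $\delta$ terms) and the sorted vector $(c-\mathbb{1}_k)^{\dom}$, and combined with the constraint $\min c=0$ this recovers $c$ up to the residual ordering freedom, which is absorbed into the $\Lambda$-action. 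Surjectivity of $r$ onto $C_\lambda$ follows because $\Psi$ maps $\Omega_\lambda$ onto $C_\lambda$ by Theorem \ref{cell:thm}(1). Finally, assertion (3) about cells is then immediate: fixing the first component fixes $P$ via $\theta$, so by Theorem \ref{cell:thm}(2) the image under $r$ is contained in the right cell $R_P$, and surjectivity onto $R_P$ follows since $\Theta$ is a bijection and $(Q,\rho)$ may be chosen freely by varying the second component; the left cell case is symmetric.

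The main obstacle is the bijectivity of $\Theta$, specifically matching the ordering freedom in the vector $c$ against the $\Lambda$-equivalence of pairs. The delicate point is that $(c-\mathbb{1}_k)^{\dom}$ alone does not recover $c$: the positions of the entries shifted by $-1$ are not canonically determined by the triple. The resolution will rely on a careful identification of this ambiguity with the $\Lambda$-orbits on pairs of components, which may be verified either directly by chasing Theorem \ref{lem:allcomp}, or by a cardinality argument comparing $|\Omega_\lambda|$ with Proposition \ref{propnumcomb} (the count of $\Psi$-fibers) together with the sizes of the right cells inside $C_\lambda$ computed via Proposition \ref{proprcellstr}.
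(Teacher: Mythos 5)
Your proposal is correct and follows essentially the same route as the paper: the heavy lifting is done by Theorem \ref{mainthmcal} (giving $\Phi\circ r=\Theta$ on canonical representatives, whence $r=\Psi\circ\Theta$ by Theorem \ref{thminverseAMBC}), the bijectivity of $\theta$ is read off from Theorem \ref{lem:allcomp}(2) together with Proposition \ref{Abij} and Lemma \ref{equidist}, and (3) is immediate from the shape of the formula and Theorem \ref{cell:thm}. You are also right that the subtle point is the bijectivity of $\Theta$, and you correctly identify the resolution: the paper likewise settles this by a fiber-count comparison, invoking Lemma \ref{lem:numberofcomp} (components in a $\widetilde{P}$-orbit are counted by $\# S_m/S_{I_w}$) against Proposition \ref{propnumcomb} (size of $\Psi$-fibers is the same multinomial coefficient). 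Two minor points you should tighten: first, comparing $|\Omega_\lambda|$ doesn't literally make sense since $\Omega_\lambda$ is infinite — the comparison must be made fiber-by-fiber over each element of $C_\lambda$, which is exactly what Lemma \ref{lem:numberofcomp} and Proposition \ref{propnumcomb} furnish. Second, for surjectivity of $r$ onto $C_\lambda$ the paper gives a slightly more direct construction: for $x\in C_\lambda$ it uses Proposition \ref{proprcellstr} to write $x=\phi^k((w')^{-1}w_0^\lambda w)$ and then invokes Lemma \ref{commonrotation}, which avoids having to argue that $\Psi$ maps $\Omega_\lambda$ onto $C_\lambda$ (though your argument via Theorem \ref{cell:thm}(1) also works once the commutativity is in hand). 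Overall the plan is sound and matches the paper's proof.
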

\begin{proof}
	For any $x\in C_\lambda$, we use Proposition \ref{proprcellstr} and consider the right cell it is contained in. Then $x=\phi^k((w')^{-1}w_0w)$ for some $k\in[0,m-1]$, $w'\in \mfF=w_0^\lambda{\cdot}\RKC_{w_0^\lambda}$ and $w\in w_0^\lambda{\cdot}R_{w_0^\lambda}$. Take $C$ to be any component containing in $Y_w$ and by Lemma \ref{commonrotation} we have $r(F^{\mathbb{1}_k}(Y_{w'}),F^{\mathbb{1}_k}(C))=x$, therefore proving the first claim.

	From the formulas in Theorem \ref{mainthm} and Corollary \ref{corconst}, we know the only factor that affects injectivity of $r$ is $(c-\mathbb{1}_k)^{\dom}$. Lemma \ref{lem:numberofcomp} and Proposition \ref{propnumcomb} show that $\Theta$ is a bijection. 
	
	Finally, the expression of the insertion (resp. recording) tabloid in the image of $\Theta$ only depends on the first (resp. second) component, therefore $r(pr_1^{-1}(C))$ is a right cell and $r(pr_2^{-1}(C))$ is a left cell.
\end{proof}
\begin{Remark}
	 From Lemma \ref{lem:numberofcomp} and Proposition \ref{propnumcomb}, the numbers of components in the $\tilde{P}$-orbit and the fibers of $\Psi$ are the same (both possess a Weyl group symmetry). Hence there are many ways to define the bijection $\Theta$ to make the diagram commute. When $(l^m)=(1^n)$, the definition of $\rho$ presented in Theorem \ref{mainthm} differs by a rotation with the one in \cite{yuganyou}. Whether there is a preferred choice reduces to the question of the geometric meaning of the weight vector $\rho$.
\end{Remark}

\begin{Corollary}
	When $\lambda=(l^m)$, the relative position map induces the following bijection:
	$$r:\Irr(\mcF l_N)\times_{\tilde{\Lambda}} \Irr(\mcF l_N)\rightarrow C_\lambda,$$
	where $\tilde{\Lambda}=\Lambda \rtimes S_m$. Explicitly, let
	$$
	C_1=F^{c'}(Y_{w'}), C_2=F^{c}(Y_{w}),
D_1=F^{d'}(Y_{u'}), D_2=F^{d}(Y_{u})$$
where $c',c,d',d\in\mathbb{Z}^m$ and $w',w,u',u\in \mfF.$
Then $(C_1,C_2)\sim(D_1,D_2)$ iff the following three criterions hold:
\begin{enumerate}
\item $\phi^{|c'|}(w')=\phi^{|d'|}(u'),$ $\phi^{|c|}(w)=\phi^{|d|}(u)$;
\item $m\mid (|c'|-|d'|),$ $m\mid(|c|-|d|);$
\item there exists $\sigma\in S_m$, such that $\sigma(c-c')-(d-d')$ is a constant vector.
\end{enumerate}
\end{Corollary}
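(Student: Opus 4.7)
The strategy is to show that the relative position map $r$ descends from $\Irr(\mcF l_N)\times_\Lambda\Irr(\mcF l_N)$ to $\Irr(\mcF l_N)\times_{\tilde{\Lambda}}\Irr(\mcF l_N)$ and induces a bijection onto $C_\lambda$, and then to translate the $\tilde{\Lambda}$-equivalence into the explicit combinatorial conditions (1)--(3).

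By Theorem \ref{mainthm}, $r=\Psi\circ\Theta$ with $\Theta$ a bijection onto $\Omega_\lambda$ and $r$ surjective onto $C_\lambda$. From Theorem \ref{thminverseAMBC} and the fact that $\lambda=(l^m)$ consists of a single segment, the fiber of $\Psi$ over any $x\in C_\lambda$ is exactly the $S_m$-orbit of $\Phi(x)$ under permutation of the $m$ entries of $\rho$. It therefore suffices to verify that the monodromy $S_m$-action on $\Irr(\mcF l_N)\times_\Lambda\Irr(\mcF l_N)$ corresponds under $\Theta$ to this permutation action on $\rho$. To do this, observe that each $Y_w$ with $w\in\mfF$ is irreducible (Lemma \ref{connected:p:orbit}), so $\sigma\in S_m$ sends $F^c(Y_w)$ to $F^{\sigma(c)}(Y_w)$; inspecting the formula for $\rho$ in Theorem \ref{mainthm} then shows that $|c|$, $\overline{|c|}$, and $\lfloor|c|/n\rfloor$ are $S_m$-invariant, while $(c-\mathbb{1}_k)^{\dom}$ records only the multiset of coordinates of $c-\mathbb{1}_k$, whose $\sigma$-permutation is precisely what rearranges $\rho$ by $\sigma$. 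For the cardinality check, Proposition \ref{propnumcomb} computes each fiber of $\Psi$ as a multinomial coefficient that coincides with the count of components in each $\widetilde{P}$-orbit from Lemma \ref{lem:numberofcomp}, confirming that no extra collapsing occurs.

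For the explicit criteria, write $C_1=F^{c'}(Y_{w'})$, $C_2=F^{c}(Y_{w})$, $D_1=F^{d'}(Y_{u'})$, $D_2=F^{d}(Y_{u})$ with $w',w,u',u\in\mfF$. An arbitrary element of $\tilde{\Lambda}$ is a pair $(\sigma,F^{c''})$ with $\sigma\in S_m$ and $\sum c''_i=0$; applying it diagonally and re-standardizing via the Remark after Theorem \ref{lem:allcomp} (which states $F^{\mathbb{1}}(Y_v)=Y_{\phi^m(v)}$), one uses $F^c(Y_v)\sim_\Lambda F^{c+c''+k\mathbb{1}}(Y_{\phi^{-mk}(v)})$ for $k$ chosen so that the shifted vector has $\min=0$. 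Equating $(\sigma\cdot C_i)$ with $D_i$ in standard form then forces: integrality of the renormalizing constants $k,k'$ for the two factors (giving $m\mid|c'|-|d'|$ and $m\mid|c|-|d|$, condition (2)); the relations $\phi^{-mk}(w')=u'$ and $\phi^{-mk'}(w)=u$, which together with $mk=|d'|-|c'|$ and $mk'=|d|-|c|$ become $\phi^{|c'|}(w')=\phi^{|d'|}(u')$ and $\phi^{|c|}(w)=\phi^{|d|}(u)$ (condition (1)); and uniqueness of the common $c''$, giving $\sigma(c-c')-(d-d')=(k-k')\mathbb{1}$, a constant vector (condition (3)). Conversely, given (1)--(3) one reverses the calculation to produce such a $(\sigma,F^{c''})$.

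The main obstacle I expect is the $S_m$-equivariance check for $\Theta$. The formula in Theorem \ref{mainthm} is stated under the normalization that the first component is $F^{\mathbb{1}_k}(Y_{w'})$ for a specific $k$ rather than a general $F^{c'}$; since $\sigma(\mathbb{1}_k)$ is generally not of this form, one must first bring $(F^{\sigma(\mathbb{1}_k)}(Y_{w'}),F^{\sigma(c)}(Y_w))$ back into standard form via a suitable diagonal $\Lambda$-translation, and then verify that the cumulative effect of this translation together with the $\delta$-terms and the $(\cdot)^{\dom}$ rearrangement in the formula combine to exactly a permutation of $\rho$. This is mechanical but requires careful bookkeeping of the floor and ceiling contributions.
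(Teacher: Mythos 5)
Your proof takes a genuinely different route from the paper's, though it arrives at the same conditions. The paper works directly with relative positions: it normalizes $C_1,C_2,D_1,D_2$ to standard form, peels off a common rotation $\phi^{\beta'}$ via Lemma~\ref{commonrotation}, and then applies Proposition~\ref{proppremain} to read off conditions (1)--(3) by equating the three components of $\Phi(r(C_1,C_2))$ and $\Phi(r(D_1,D_2))$ in turn --- conditions (1) and (2) come from matching the $P$ and $Q$ tabloids (equality of tabloids forces $\beta'=\eta'$, $\beta=\eta$, and the $\phi^{m\alpha}$-relations), and condition (3) from matching $\rho$, interpreted geometrically as the two second components lying in the same $\widetilde{P}$-orbit. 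You instead treat the $\tilde\Lambda$-action as primary: you write out what $(\sigma,F^{c''})$ does to the standard-form representatives, re-normalize via $F^{\mathbb{1}}(Y_v)=Y_{\phi^m(v)}$, and solve for $\sigma,c''$ to extract (1)--(3), while separately checking the $\Psi$-fiber count against the component count to get bijectivity. Both approaches are valid and complementary; the paper's leverages the formula already proved (Proposition~\ref{proppremain}) and is computationally shorter, whereas yours makes the $\tilde\Lambda$-equivalence more transparent. Two caveats on your write-up: (i) you describe the fiber of $\Psi$ over $x$ as ``the $S_m$-orbit of $\Phi(x)$ under permutation of the $m$ entries of $\rho$,'' but it is the centralized weight $\tilde\rho=\rho-s(P)+s(Q)$ that is permuted, not $\rho$ itself (for rectangular $\lambda$ the offset is a shift, so this is a slip in phrasing rather than substance, but it matters for seeing the constant-vector freedom in condition (3)); and (ii) the $S_m$-equivariance of $\Theta$ that you flag as ``careful bookkeeping'' --- re-normalizing $F^{\sigma(\mathbb{1}_k)}(Y_{w'})$ back to standard form and tracking the resulting shifts of $\lfloor|c|/n\rfloor$ and the $\delta$-terms --- is precisely the content that the paper avoids by going through Proposition~\ref{proppremain}, so your argument as written still has that step to fill in.
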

\begin{proof}
	Let $|c'|=m\alpha'+\beta',|c|=m\alpha+\beta,|d'|=m\gamma'+\eta',|d|=m\gamma+\eta$ where $\alpha',\alpha,\gamma',\gamma\in\mbZ$ and $\beta',\beta,\eta',\eta\in[0,m-1].$ Then $$x\coloneqq r(C_1,C_2)=r\left(F^{\mathbb{1}_{\beta'}}(Y_{\phi^{m\alpha'}(w')}), F^{c-c'+\alpha'\mathbb{1}+\mathbb{1}_{\beta'}}(Y_{w})\right)=\phi^{\beta'}\left( r\left(Y_{\phi^{m\alpha'}(w')}, F^{c-c'+\alpha'\mathbb{1}}(Y_{w})\right)\right).$$
	Similarly,
	$$y\coloneqq r(D_1,D_2)=r\left(F^{\mathbb{1}_{\eta'}}(Y_{\phi^{m\gamma'}(u')}), F^{d-d'+\gamma'\mathbb{1}+\mathbb{1}_{\eta'}}(Y_{u})\right)=\phi^{\eta'}\left( r\left(Y_{\phi^{m\gamma'}(u')}, F^{d-d'+\gamma'\mathbb{1}}(Y_{u})\right)\right).$$
	Since $x=y$, we know $\beta'=\eta'$. Applying Proposition \ref{proppremain} and $P(\phi^{-\beta'}(x))=P(\phi^{-\eta'}(y))$, we have $\phi^{m\alpha'}(w')=\phi^{m\gamma'}(u')$. And $Q(\phi^{-\beta'}(x))=Q(\phi^{-\eta'}(y))$ indicates $\overline{w^{-1}}(T^\lambda)+|c|=\overline{u^{-1}}(T^\lambda)+|d|$, which is equivalent to $\beta=\eta$ and $\phi^{m\alpha}(w)=\phi^{m\gamma}(u)$. 
	
	Finally, we have $\rho(\phi^{-\beta'}(x))=\rho(\phi^{-\eta'}(y))$, which is equivalent to saying $F^{c-c'+\alpha'\mathbb{1}}(Y_w)$ and $F^{d-d'+\gamma'\mathbb{1}}(Y_u)$ are contained in the same $\widetilde{P}$-orbit. But $F^{c-c'+\alpha'\mathbb{1}}(Y_w)=F^{c-c'+\alpha'\mathbb{1}-\alpha\mathbb{1}}(Y_{\phi^{m\alpha}(w)})$, and $F^{d-d'+\gamma'\mathbb{1}}(Y_u)=F^{d-d'+\gamma'\mathbb{1}-\gamma\mathbb{1}}(Y_{\phi^{m\gamma}(u)})$. So there exists $\sigma\in S_m$, such that $\sigma(c-c'+\alpha'\mathbb{1}-\alpha\mathbb{1})=d-d'+\gamma'\mathbb{1}-\gamma\mathbb{1}$.
\end{proof}
A natural conjecture is the following:
\begin{Conjecture}
	For every partition $\lambda$ of $n$, there exists bijective maps $\theta_\lambda$ and $\Theta_\lambda$ that make the following two diagrams commutative:
	\begin{equation}
	\begin{tikzcd}
	\Irr(\mathcal{F}l_{N(\lambda)})/{\Lambda_{N(\lambda)}}\arrow[dd,"\theta_\lambda"] && \Irr(\mathcal{F}l_{N(\lambda)})\times_{\Lambda_{N(\lambda)}} \Irr(\mathcal{F}l_{N(\lambda)})\arrow[ll, "pr_i"']\arrow[rr, "r"]\arrow[dd,"\Theta_\lambda"] && \widetilde{S_n}\\
	&&&&\\
	T(\lambda) && \Omega_\lambda\arrow[ll,"pr_i"']\arrow[uurr,"\Psi"'] &&
	\end{tikzcd}
	\end{equation}
	where $N(\lambda)\in\mathfrak{g}(K)$ is a generic lift of a nilpotent element in $\mathfrak{g}$ of type $\lambda$ and $pr_i$, $i=1,2$ are the natural projection maps onto the first and second component.
\end{Conjecture}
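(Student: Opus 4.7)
The plan is to deduce the theorem from Theorem \ref{mainthmcal} together with the parametrization of components (Theorem \ref{lem:allcomp}), the combinatorial structure of cells (Proposition \ref{proprcellstr}), and the counting results in Lemma \ref{lem:numberofcomp} and Proposition \ref{propnumcomb}. Commutativity of the right-hand triangle is essentially a repackaging of Theorem \ref{mainthmcal}: the formula defining $\Theta$ on $(F^{\mathbb{1}_k}(Y_{w'}),F^c(Y_{w''}))$ is precisely the output of $\Phi\circ r$ computed there, so applying $\Psi$ and invoking Theorem \ref{thminverseAMBC} recovers $r$ after passing to dominant representatives. Commutativity of the left square follows from inspection of the insertion tabloid formula together with Corollary \ref{corphim}, which guarantees that $\phi^m$ preserves the fundamental box and hence that $\theta$ is well-defined on $\Lambda$-orbits.

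For surjectivity of $r$ onto $C_\lambda$ (claim (1), which is Lusztig's conjecture in the rectangular case), I would start from $x\in C_\lambda$ and use Proposition \ref{proprcellstr} to write $x=\phi^k((w')^{-1}w_0^\lambda w)$ with $k\in[0,m-1]$, $w'\in\mfF$, and $w\in w_0^\lambda{\cdot}R_{T^\lambda}$. Choosing any irreducible component $C\subset Y_w$ (which exists by Theorem \ref{lem:allcomp}) and applying Lemma \ref{commonrotation} then realizes $x$ as $r(F^{\mathbb{1}_k}(Y_{w'}),F^{\mathbb{1}_k}(C))$, giving surjectivity.

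The main work, and the step I expect to be hardest, is the bijectivity of $\Theta$ in claim (2). The point to exploit is that in the formula for $\rho$, the vector $c-\mathbb{1}_k$ enters only through its dominant rearrangement $(c-\mathbb{1}_k)^{\dom}$, so the fiber of $\Theta$ over a fixed triple is parametrized by an $S_m$-coset. To show this matches the non-injectivity coming from the geometry, one must verify that the multinomial count of components of $Y_w$ coming from Lemma \ref{lem:numberofcomp} (controlled by the set $I_w$ of indices $i$ where equation $(3)_i$ is satisfied) agrees with the multinomial count of preimages of $\Psi$ coming from Proposition \ref{propnumcomb} (controlled by the set $\{i:\Diff_i(w^{-1})>\lch_i(P(w^{-1}w_0^\lambda))\}$). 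The translation between these two index sets is precisely what Proposition \ref{prop:A} supplies, identifying the fundamental box via the alcove equations. A cardinality comparison on both sides, combined with the surjectivity of $r$ established in the previous step, then yields bijectivity.

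Finally, claim (3) is a direct consequence of (2) together with the shape of the formula for $\Theta$: the insertion tabloid $\overline{(w')^{-1}}(T^\lambda)+k$ depends only on $(w',k)$, hence only on the first component up to $\Lambda$, while the recording tabloid $\overline{(w'')^{-1}}(T^\lambda)+|c|$ depends only on $(w'',|c|)$, hence only on the second component up to $\Lambda$. Combined with Theorem \ref{cell:thm}, this identifies $r(pr_1^{-1}(C))$ with a right cell and $r(pr_2^{-1}(C))$ with a left cell in $C_\lambda$.
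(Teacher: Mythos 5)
There is a fundamental mismatch between what you prove and what the statement asserts. The statement is the paper's final \emph{Conjecture}, which asserts the existence of commuting bijections $\theta_\lambda$ and $\Theta_\lambda$ for \emph{every} partition $\lambda$ of $n$. Your argument, however, relies entirely on results that the paper establishes only for rectangular $\lambda = (l^m)$: the fundamental box $\mfF$ (Definition \ref{def:fundbox}), Theorem \ref{mainthmcal}, Theorem \ref{lem:allcomp}, Proposition \ref{proprcellstr}, Lemma \ref{lem:numberofcomp}, Proposition \ref{propnumcomb}, Corollary \ref{corphim}, and Lemma \ref{commonrotation} all carry the standing hypothesis $\lambda = (l^m)$. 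What you have written is essentially a proof of Theorem \ref{mainthm}, the paper's main theorem for the rectangular case, not a proof of the general conjecture.

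The obstruction to extending your argument to general $\lambda$ is not merely notational. By Theorem \ref{knuthrho} the monodromy group $G^R_w$ is trivial if and only if $\lambda$ is a rectangle, and the paper explicitly flags this as the foundation of the whole approach: finite Knuth classes are what make $\mfF$ a finite fundamental domain and permit the bijection in Proposition \ref{Abij}. For non-rectangular $\lambda$ the Knuth classes are infinite, so the entire parametrization of components via $\mfF$ and the powers $F^c$ breaks down, as does the cardinality matching between Lemma \ref{lem:numberofcomp} and Proposition \ref{propnumcomb} on which your bijectivity argument hinges. The statement remains a conjecture in the paper precisely because this machinery has not been constructed outside the rectangular case; no proof of it exists in the paper for you to compare against, and your proposal does not supply one.
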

\appendix
\section{Diagrams of left Knuth classes containing $w_0^\lambda$}
In this appendix, we show graphs of $\LKC_{w_0^\lambda}$ in case $\lambda=(2,2),(3,3)$ and $(2,2,2)$, where the edges corresponds to left Knuth moves.
\begin{figure}[!h]
	\label{LKC22}
	\begin{tikzcd}
	&\left[2,1,4,3\right]\arrow[dl, dash, "s_2\cdot"']\arrow[dr, dash, "s_0\cdot"]&\\
	\left[3,1,4,2\right]&&\left[2,0,5,3\right]
	\end{tikzcd}
	\caption{$\LKC_{w_0^\lambda}$ when $\lambda=(2,2)$}
\end{figure}
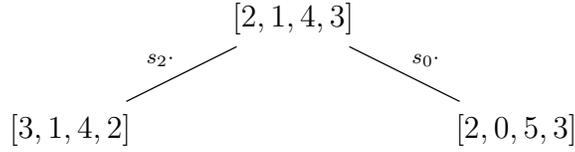

\begin{figure}
	\label{LKC33}
	\begin{tikzcd}
	&\left[2,1,4,3,6,5\right]\arrow[ddl, dash, "s_2\cdot"']\arrow[dd, dash, "s_4\cdot"']\arrow[ddr, dash, "s_0\cdot"]&\\
	&&\\
	\left[3,1,4,2,6,5\right]\arrow[dd, dash, "s_4\cdot"']\arrow[ddr, dash, "s_0\cdot", near end]&\left[2,1,5,3,6,4\right]\arrow[ddl, dash, "s_2\cdot"', near end]\arrow[ddr, dash, "s_0\cdot", near end]&\left[2,0,4,3,7,5\right]\arrow[ddl, dash, "s_2\cdot"', near end]\arrow[dd, dash, "s_4\cdot"]\\
	&&\\
	\left[3,1,5,2,6,4\right]\arrow[dd, dash, "s_3\cdot"']&\left[3,0,4,2,7,5\right]\arrow[dd, dash, "s_1\cdot"']&\left[2,0,5,3,7,4\right]\arrow[dd, dash, "s_5\cdot"]\\
	&&\\
	\left[4,1,5,2,6,3\right]&\left[3,0,4,1,8,5\right]&\left[2,-1,6,3,7,4\right]
	\end{tikzcd}
	\caption{$\LKC_{w_0^\lambda}$ when $\lambda=(3,3)$}
\end{figure}

\begin{figure}
	\begin{turn}{90}
		\begin{adjustbox}{scale=0.82}
		\tiny{
		\begin{tikzcd}[column sep=tiny]
		&&&&&\left[3,2,1,6,5,4\right]\arrow[ddddll, dash, "s_3\cdot"']\arrow[ddddrr, dash, "s_0\cdot"]&&&&&\\
		&&&&&&&&&&\\
		&&&&&&&&&&\\
		&&&&&&&&&&\\
		&&&\left[4,2,1,6,5,3\right]\arrow[ddddll, dash, "s_2\cdot"']\arrow[dddd, dash, "s_4\cdot"']\arrow[ddddrr, dash, "s_0\cdot"]&&&&\left[3,2,0,7,5,4\right]\arrow[ddddll, dash, "s_3\cdot"']\arrow[dddd, dash, "s_5\cdot"']\arrow[ddddrr, dash, "s_1\cdot"]&&&\\
		&&&&&&&&&&\\
		&&&&&&&&&&\\
		&&&&&&&&&&\\
		&\left[4,3,1,6,5,2\right]\arrow[ddddl, dash, "s_0\cdot"']\arrow[ddddr, dash, "s_4\cdot"]&&\left[5,2,1,6,4,3\right]\arrow[ddddl, dash, "s_2\cdot"']\arrow[ddddr, dash, "s_0\cdot"]&&\left[4,2,0,7,5,3\right]&&\left[3,2,-1,7,6,4\right]\arrow[ddddl, dash, "s_3\cdot"']\arrow[ddddr, dash, "s_1\cdot"]&&\left[3,1,0,8,5,4\right]\arrow[ddddl, dash, "s_5\cdot"']\arrow[ddddr, dash, "s_3\cdot"]&\\
		&&&&&&&&&&\\
		&&&&&&&&&&\\
		&&&&&&&&&&\\
		\left[4,3,0,7,5,2\right]\arrow[dddd, dash, "s_1\cdot"']\arrow[ddddrr, dash, "s_4\cdot"]&&\left[5,3,1,6,4,2\right]&&\left[5,2,0,7,4,3\right]\arrow[dddd, dash, "s_5\cdot"]\arrow[ddddll, dash, "s_2\cdot"']&&\left[4,2,-1,7,6,3\right]\arrow[dddd, dash, "s_4\cdot"']\arrow[ddddrr, dash, "s_1\cdot"]&&\left[3,1,-1,8,6,4\right]&&\left[4,1,0,8,5,3\right]\arrow[ddddll, dash, "s_5\cdot"']\arrow[dddd, dash, "s_2\cdot"]\\
		&&&&&&&&&&\\
		&&&&&&&&&&\\
		&&&&&&&&&&\\
		\left[4,3,0,8,5,1\right]\arrow[ddddr, dash, "s_4\cdot"']\arrow[ddddddddrrrrr, dash, bend right=66, "s_2\cdot"', near end]&&\left[5,3,0,7,4,2\right]\arrow[ddddl, dash, "s_1\cdot"']\arrow[ddddr, dash, "s_5\cdot"]&&\left[6,2,-1,7,4,3\right]\arrow[ddddl, dash, "s_2\cdot"']\arrow[ddddr, dash, "s_4\cdot"]&&\left[5,2,-2,7,6,3\right]\arrow[ddddl, dash, "s_5\cdot"']\arrow[ddddr, dash, "s_1\cdot"]&&\left[4,1,-1,8,6,3\right]\arrow[ddddl, dash, "s_4\cdot"']\arrow[ddddr, dash, "s_2\cdot"]&&\left[4,1,0,9,5,2\right]\arrow[ddddl, dash, "s_5\cdot"]\arrow[ddddddddlllll, dash, bend left=66, "s_1\cdot", near end]\\
		&&&&&&&&&&\\
		&&&&&&&&&&\\
		&&&&&&&&&&\\
		&\left[5,3,0,8,4,1\right]\arrow[ddddr, dash, "s_5\cdot"']&&\left[6,3,-1,7,4,2\right]\arrow[ddddl, dash, "s_1\cdot"]&&\left[6,2,-2,7,5,3\right]&&\left[5,1,-2,8,6,3\right]\arrow[ddddr, dash, "s_2\cdot"']&&\left[4,1,-1,9,6,2\right]\arrow[ddddl, dash, "s_4\cdot"]&\\
		&&&&&&&&&&\\
		&&&&&&&&&&\\
		&&&&&&&&&&\\
		&&\left[6,3,-1,8,4,1\right]\arrow[dddd, dash, "s_0\cdot"']&&&\left[4,2,0,9,5,1\right]&&&\left[5,1,-2,9,6,2\right]\arrow[dddd, dash, "s_3\cdot"]&&\\
		&&&&&&&&&&\\
		&&&&&&&&&&\\
		&&&&&&&&&&\\
		&&\left[7,3,-1,8,4,0\right]&&&&&&\left[5,1,-3,10,6,2\right]&&
		\end{tikzcd}}
	\end{adjustbox}
	\end{turn}
\caption{$\LKC_{w_0^\lambda}$ when $\lambda=(2,2,2)$}
\label{LKC222}
\end{figure}

\bibliography{main}
\bibliographystyle{plain}
\end{document}